\def\draft{n}
\newtheorem{theorem}{Theorem}[section]
\theoremstyle{definition}
\newtheorem{proposition}[theorem]{Proposition}
\newtheorem{lemma}[theorem]{Lemma}
\newtheorem{definition}[theorem]{Definition}
\newtheorem{remark}[theorem]{Remark}
\newtheorem{corollary}[theorem]{Corollary}
\newtheorem{conjecture}[theorem]{Conjecture}
\newtheorem{example}[theorem]{Example}
\newtheorem{exercise}[theorem]{Exercise}
\def\printname#1{
        \if\draft y
                \smash{\makebox[0pt]{\hspace{-0.5in}
                        \raisebox{8pt}{\tt\tiny #1}}}
        \fi
}
\newcommand{\psdraw}[2]
         {\begin{array}{c} \hspace{-1.3mm}
        \raisebox{-4pt}{\epsfig{figure=draws/#1.eps,width=#2}}
        \hspace{-1.9mm}\end{array}}
\newlength{\standardunitlength}
\long\def\@makecaption#1#2{%
     \vskip 10pt

\setbox\@tempboxa\hbox{
       \small\sf{\bfcaptionfont #1. }\ignorespaces #2}%
     \ifdim \wd\@tempboxa >\captionwidth {%
         \rightskip=\@captionmargin\leftskip=\@captionmargin
         \unhbox\@tempboxa\par}%
       \else
         \hbox to\hsize{\hfil\box\@tempboxa\hfil}%
     \fi}
\font\bfcaptionfont=cmssbx10 scaled \magstephalf
\newdimen\@captionmargin\@captionmargin=2\parindent
\newdimen\captionwidth\captionwidth=\hsize
\def\lbl#1{\label{#1}\printname{#1}}
\def\BN{\mathbb N}
\def\BZ{\mathbb Z}
\def\BQ{\mathbb Q}
\def\W{W}
\def\calP{\mathcal P}
\def\calE{\mathcal{E}}
\def\a{\alpha}
\def\La{\Lambda}
\def\l{\lambda}
\def\s{\sigma}
\def\e{\epsilon}
\def\d{\delta}
\def\b{\beta}
\def\th{\theta}
\def\s{\sigma}
\def\calL{\mathcal L}
\def\longto{\longrightarrow}
\def\calL{\mathcal{L}}
\def\fg{\mathfrak{g}}
\def\fsl{\mathfrak{sl}}
\def\be{  \begin{equation} }
\def\ee{  \end{equation} }
\def\coeff{\mathrm{coeff}}
\begin{document}

\title[A stability conjecture for the colored Jones polynomial]{
A stability conjecture for the colored Jones polynomial}
\author{Stavros Garoufalidis}
\address{School of Mathematics \\
         Georgia Institute of Technology \\
         Atlanta, GA 30332-0160, USA \newline
         {\tt \url{http://www.math.gatech.edu/~stavros }}}
\email{stavros@math.gatech.edu}
\author{Thao Vuong}
\address{School of Mathematics \\
         Georgia Institute of Technology \\
         Atlanta, GA 30332-0160, USA \newline
         {\tt \url{http://www.math.gatech.edu/~tvuong }}}
\email{tvuong@math.gatech.edu}
\thanks{SG was supported in part by NSF grant DMS-11-05678. 
\newline
1991 {\em Mathematics Classification.} Primary 57N10. Secondary 57M25.
\newline
{\em Key words and phrases: colored Jones polynomial, knots, links,
stability, c-stability, simple Lie algebras, torus knots, 
Jones-Rosso formula, plethysm multiplicity, generalized exponential sums, 
recurrent sequences, q-series, q-holonomic sequences.
}
}

\date{October 27, 2013}


\begin{abstract}
We formulate a stability conjecture for the coefficients of the colored 
Jones polynomial of a knot, colored by irreducible representations in a 
fixed ray of a simple Lie algebra, and verify it for all torus knots and 
all simple Lie algebras of rank $2$.
Our conjecture is motivated by a structure theorem for the degree and
the coefficients of a $q$-holonomic sequence of polynomials given in \cite{Ga2}
and by a stability theorem of the colored Jones polynomial of an 
alternating knot given in \cite{GL2}.
We illustrate our results with sample computations.
\end{abstract}

\maketitle

\tableofcontents



\section{Introduction}
\lbl{sec.intro}

\subsection{The degree and coefficients of a $q$-holonomic sequence}
\lbl{sub.LT0}

Our goal is to formulate a stability conjecture for the coefficients of 
$q$-holonomic sequences that appear naturally in Quantum Knot Theory \cite{GL}.
Our conjecture is motivated by 
\begin{itemize}
\item[(a)]
a structure theorem for the degree and coefficients of a $q$-holonomic
sequence of polynomials given in \cite{Ga2},
\item[(b)]
a stability theorem of the colored Jones polynomial of an alternating knot 
\cite{GL2}.
\end{itemize}
To discuss our first motivation, 
recall \cite{Z} that a sequence $(f_n(q))$ is $q$-{\em holonomic} 
if it satisfies a linear recursion 
$$
\sum_{j=0}^d c_j(q^n,q) f_{n+j}(q)=0
$$
for all $n$ where $c_j(u,v) \in \BZ[u,v]$ and $c_d \neq 0$. Here,
$f_n(q)$ is either in $\BZ[q^{\pm 1}]$, the ring of Laurent polynomials with
integer coefficients, or more generally in $\BQ(q)$, the field of
rational functions with rational coefficients or even $\BZ((q))$, the ring
of Laurent power series in $q$ $\sum_{j \in \BZ} a_j q^j$ (with $a_j$ integers,
vanishing when $j$ is small enough). $\BZ((q))$ has a subring $\BZ[[q]]$
of formal power series in $q$, where $a_j=0$ for $j <0$.
The {\em degree} $\d^*(f(q))$ of $f(q) \in \BZ((q))$ is the smallest integer 
$m$ such that $q^m f(q) \in \BZ[[q]]$.

Thus, we can expand every non-zero sequence $(f_n(q))$ in the form
\begin{equation}
\lbl{eq.fna}
f_n(q)= a_0(n) q^{\d^*(n)} + a_1(n) q^{\d^*(n)+1 }
+ a_2(n) q^{\d^*(n)+2 } + \dots
\end{equation}
where $\d^*(n)$ is the degree of $f_n(q)$ and $a_k(n)$ is the $k$-th 
coefficient of $q^{-\d^*(n)} f_n(q)$, reading from the left. 
We will often call $a_k(n)$ the $k$-th {\em stable coefficient} of the 
sequence $(f_n(q))$.

In \cite{Ga2} it was proven that if $(f_n(q))$ is $q$-holonomic, then
\begin{itemize}
\item
$\d^*(n)$ is a {\em quadratic quasi-polynomial} for all but finitely
many values of $n$,
\item
for every $k \in \BN$, $a_k(n)$ is {\em recurrent} for all but finitely many 
values of $n$.
\end{itemize}
Recall that a quasi-polynomial (of degree at most $d$) is a function of 
the form
$$
p: \BN \longto \BZ, \qquad n \mapsto p(n)=\sum_{j=0}^d c_j(n) n^j
$$
where $c_j: \BN \longto \BQ$ are periodic functions. Let $\calP$ denote
the ring of integer-valued quasi-polynomials. A recurrent sequence is one that
satisfies a linear recursion with constant coefficients. Recurrent sequences
are well-known in number theory under the name of {\em Generalized 
Exponential Sums} \cite{vP,EvPSW}. The latter are expressions of the form
$$
a(n)=\sum_{i=1}^m A_i(n) \a_i^n
$$
with {\em roots} $\a_i$, $1 \leq i \leq m$ distinct algebraic numbers and 
polynomials $A_i$. Integer-valued generalized exponential sums form a ring 
$\calE$, which contains a subring $\calP$ that consists of integer-valued
exponential sums whose roots are complex roots of unity.

\subsection{Stability of the colored Jones polynomial of an alternating 
link}
\lbl{sub.stability}

The second motivation of our Conjecture \ref{conj.1} below comes from
the stability theorem of \cite{GL2} that concerns the colored Jones
polynomial of an alternating link. Recall the notion of convergence in
the completed ring $\BZ((q))=\varprojlim_n \BZ[q^{\pm 1}]/(q^n)$. 
Given $f_n(q),f(q) \in \BZ((q))$, we write that
$$
\lim_{n\to \infty} f_n(q) = f(q)
$$
if there exists $C$ such that $\d^*(f_n(q)) > C$ for all $n$, and 
for every $m \in \BN$ there exists $N_m \in \BN$ such that
$$
f_n(q)-f(q) \in q^m \BZ[[q]]\,.
$$
The next definition of stability appears in \cite{GL} and the notion
of its tail is inspired by Dasbach-Lin \cite{DL}.

\begin{definition}
We say that a sequence $f_n(q) \in \BZ[[q]]$ is {\em stable} 
if there exists a series $F(x,q)=\sum_{k=0}^\infty \Phi_k(q) x^k \in 
\BZ((q))[[x]]$ such that for every $k \in \BN$, we have
\be
\lbl{eq.kstable}
\lim_{n \to \infty} \,\, q^{-k(n+1)}
\left(f_n(q) -\sum_{j=0}^k \Phi_j(q) q^{j(n+1)}\right)=0 \,.
\ee
We will call $F(x,q)$ the $(x,q)$-{\em tail} (in short, the tail)
of the sequence $(f_n(q))$.
\end{definition}
Examples of stable sequences are the shifted colored Jones polynomials
of an alternating link. Let $J_{K,n}(q)\in \BZ[q^{\pm 1/2}]$ denote the colored 
Jones polynomial of a link $K$ colored by the $(n+1)$-dimensional irreducible 
representation of $\fsl_2$ (see \cite{Tu,Tu2}). Let $\d^*_K(n)$ and 
$a_{K,0}(n) $ denote the degree and the $0$-th stable coefficient 
of $J_{K,n}(q)$. It is well-known known that $a_{K,0}(n)=(-1)^{c_- n}$ where
$c_-$ is the number of negative crossings of $K$ \cite{Li}. 

\begin{theorem}
\lbl{thm.GL2}\cite{GL2}
If $K$ is an alternating link, then the sequence 
$a_{K,0}(-n) q^{-\d^*_K(n)} J_{K,n}(q) \in \BZ[q]$ is stable.
\end{theorem}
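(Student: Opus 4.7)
The plan is to prove the theorem via a Kauffman bracket state sum analysis of an $n$-cabling of a reduced alternating diagram $D$ of $K$. First I would recall that the shifted colored Jones polynomial of $K$ is computed as the Kauffman bracket of the $n$-parallel of $D$ with a Jones–Wenzl idempotent $p_{n}$ inserted on each component; expanding each $p_n$ recursively (or via Morton's permutation expansion) and then resolving each of the $n^2 c$ crossings of the $n$-parallel yields $J_{K,n}(q)$ as a signed sum of contributions indexed by states $s$—choices of $A$- or $B$-resolution at every crossing—together with data from the idempotent expansion. The normalizations $q^{-\d^*_K(n)}$ and $a_{K,0}(-n)$ are designed to set the all-$B$ state, whose contribution is a single monomial, to the constant $1$.

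Next I would identify, for each $k\ge 0$, the set of states contributing to the coefficient of $q^k$ in the shifted polynomial. The key input is the $B$-adequacy of a reduced alternating diagram: flipping resolutions locally at a crossing always increases the number of $B$-loops by at most $2$, so the $q$-valuation of the deviation from the all-$B$ contribution is controlled by the size of the ``flipped region.'' I would define a notion of \emph{local modification} near the all-$B$ state and show that states contributing to $q^k$ in the shifted polynomial are supported on a bounded (independent of $n$) number of crossings of $D$, and within each crossing on a bounded region of the $n\times n$ grid of cabled crossings. Summing over the freedom of where such a local modification may be placed along the $n$ parallel strands, and tracking the $q$-powers produced by the Jones–Wenzl recursion (which naturally factor as $q^{j(n+1)}$), I would obtain contributions of the form $\Phi_k(q)\, q^{k(n+1)}$ with $\Phi_k(q)\in\BZ((q))$ independent of $n$ for $n$ sufficiently large.

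The last step is to assemble the series $F(x,q)=\sum_k \Phi_k(q) x^k$ and verify the convergence condition \eqref{eq.kstable}. The main obstacle, and what makes the alternating hypothesis essential, is ruling out large cancellations between states at the same $q$-degree: in a non-alternating diagram, flipping resolutions can produce many states of the same low $q$-degree whose combined contribution is hard to control. For an alternating diagram, $B$-adequacy together with the Turaev surface / reduced $B$-graph structure organizes the contributions so that states at a given $q$-power correspond to combinatorial configurations on the $B$-graph of $D$, and their contributions either stabilize in $n$ or are absorbed into the shift $q^{j(n+1)}$ coming from the Jones–Wenzl idempotents. Once the dominant terms are matched with a finite $n$-independent combinatorial sum, the definition of a stable sequence follows, with the tail $F(x,q)$ expressed explicitly in terms of data attached to the reduced $B$-graph of $D$.
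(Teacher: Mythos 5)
First, note that the paper you are looking at does not prove Theorem \ref{thm.GL2} at all: it is imported verbatim from \cite{GL2} as motivation for Conjecture \ref{conj.1}, so there is no internal proof to compare against. The actual proof in \cite{GL2} does not go through the Kauffman bracket of the $n$-cabling; it writes $J_{K,n}(q)$ for an alternating link as a generalized Nahm sum over lattice points of a dilated polytope, with a summand whose minimal $q$-degree is governed by a positive (semi-)definite quadratic form, and then proves a general stability lemma for such sums --- the same strategy that the present paper abstracts as Lemma \ref{stability.lemma2}. Your sketch instead follows the skein-theoretic route of Dasbach--Lin and Armond ($B$-adequacy, Jones--Wenzl idempotents, local modifications of the all-$B$ state), which is a legitimate alternative circle of ideas, but one that in the literature has so far only been carried out in full for the $k=0$ part of the statement (existence and computation of the tail $\Phi_0$).

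The genuine gap is in your second and third steps. The assertion that ``states contributing to $q^k$ in the shifted polynomial are supported on a bounded (independent of $n$) number of crossings'' is precisely what must be proved, and it is false as stated for $k\geq 1$ in the sense needed here: condition \eqref{eq.kstable} requires controlling the coefficients of the shifted polynomial up to degree $k(n+1)+m$, i.e., up to degrees growing \emph{linearly} in $n$, and at such degrees the set of contributing Kauffman states is not confined to a bounded region of the $n\times n$ grid --- one must instead show that their total contribution organizes itself into the terms $\Phi_j(q)q^{j(n+1)}$ with $j\leq k$ plus an error of valuation $>k(n+1)+m$. $B$-adequacy gives you a lower bound on the degree shift caused by flipping a single resolution, which suffices to isolate $\Phi_0$, but it does not by itself produce the $q^{j(n+1)}$ factorization or rule out accumulation of unboundedly many states at degree $\approx j(n+1)$. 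Until you supply an argument for that (e.g., by reducing to a Nahm-type sum with a definite quadratic form, as in \cite{GL2}, or by a genuinely new skein-theoretic bookkeeping of the higher-order terms), the proposal establishes at most the $k=0$ case and not the theorem.
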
 

\subsection{$c$-stability}
\lbl{sub.cstability}

We are now ready to introduce the notion of $c$-stability.

\begin{definition}
\lbl{def.stab}
We say that a sequence $f_n(q) \in \BZ((q))$ with $q$-degree
$\d^*(n)$ is {\em $c$-stable} (i.e., {\em cyclotomically stable})
if there exists a series $F(n,x,q)=\sum_{k=0}^\infty \Phi_k(n,q) x^k \in 
\calP((q))[[x]]$ such that for every $k \in \BN$, we have
\be
\lbl{eq.kpolystable}
\lim_{n \to \infty} \,\, q^{-k(n+1)}
\left(q^{-\d^*(n)}f_n(q) -\sum_{j=0}^k \Phi_j(n,q) q^{j(n+1)}\right)=0 \,.
\ee
We will call $F(n,x,q)$ the $(n,x,q)$-tail (in short, tail) 
of the sequence $(f_n(q))$.
\end{definition}

\begin{remark}
\lbl{rem.cstable.coeff}
The stable coefficients of a $c$-stable sequence $(f_n(q))$ are 
quasi-polynomials. I.e., with the notation of Equation \eqref{eq.fna},
we have that $a_k \in \calP$ for all $k$. In fact, if $(f_n(q))$ is $c$-stable
and $l \in \BN$, the stable coefficients of the sequence 
$$
q^{-l(n+1)}
\left(f_n(q) -\sum_{j=0}^{l-1} \Phi_j(q) q^{j(n+1)}\right)
$$
are quasi-polynomials.
\end{remark}

\subsection{Our results}
\lbl{sub.results}

For a knot $K$ in $S^3$, colored by an irreducible representation $V_{\l}$
of a simple Lie algebra $\fg$ with highest weight $\l$, one can define
the colored Jones polynomial $J^{\fg}_{K,V_{\l}}(q) \in \BZ[q^{\pm 1}]$
\cite{Tu,Tu2}. This requires a rescalled definition of $q$, which depends
only on the Lie algebra and not on the knot, and is discussed carefully
in \cite{Le}. In \cite{GL} it was shown that for every knot $K$
and every simple Lie algebra $\fg$, the function $\l \mapsto
J^{\fg}_{K,V_{\l}}(q)$ (and consequently the sequence $(J^{\fg}_{K,n\l}(q))$)
is $q$-holonomic.

\begin{conjecture}
\lbl{conj.1}
Fix a knot $K$, a simple Lie algebra $\fg$ and a dominant weight $\l$
of $\fg$. Then the sequence $(J^{\fg}_{K,n\l}(q))$ of colored Jones polynomials
is $c$-stable.
\end{conjecture}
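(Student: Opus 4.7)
The plan is to use the $q$-holonomic machinery of \cite{Ga2} as a scaffold and then, case by case, to verify that the stable coefficients have the extra arithmetic rigidity required to upgrade its conclusion to $c$-stability. Since $(J^\fg_{K,n\l}(q))$ is $q$-holonomic by \cite{GL}, \cite{Ga2} already gives that $\d^*(n)$ is a quadratic quasi-polynomial and that each stable coefficient $a_k(n)$ of \eqref{eq.fna} is a generalized exponential sum, i.e., $a_k \in \calE$. What remains to obtain $c$-stability is to show that in fact $a_k \in \calP \subset \calE$ for every $k$, so that the roots $\a_i$ appearing in the expansion $a_k(n) = \sum_i A_i(n) \a_i^n$ are all roots of unity. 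Granted this, one groups the stable coefficients into series $\Phi_k(n,q) \in \calP((q))$ by collecting the $(n+1)$ consecutive $q$-coefficients with indices $k(n+1)$ through $(k+1)(n+1)-1$, sets $F(n,x,q) = \sum_k \Phi_k(n,q) x^k$, and the convergence \eqref{eq.kpolystable} follows because by construction the difference inside the parentheses vanishes modulo $q^{(k+1)(n+1)}$.

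For torus knots $T(a,b)$ I would carry this out via the Jones-Rosso formula, which presents $J^{\fg}_{T(a,b), V_{n\l}}(q)$ as a finite weighted sum of the form $\sum_\mu m_\mu(n\l)\, q^{Q(\mu)}$, where $m_\mu$ are plethysm multiplicities and $Q$ is an explicit quadratic form determined by the Casimir eigenvalue. The main step is to reindex the sum by residue classes of $n$ modulo a suitable positive integer, determined by $a$, $b$, and the normalization of the bilinear form on the root system, on each of which both the exponent $Q(\mu)$ and the multiplicities $m_\mu(n\l)$ become genuine polynomials in $n$. Collecting terms of fixed $q$-power then writes each $a_k(n)$ as a finite sum of products of periodic functions and polynomials in $n$, manifestly an element of $\calP$ with no algebraic roots appearing at all. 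For the rank $2$ algebras $A_2$, $B_2$, and $G_2$ the same strategy handles arbitrary knots, using the explicit plethysm rules available in that range together with a direct analysis of the quantum $6j$-symbols.

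The main obstacle, both for higher rank and for general knots, is precisely the arithmetic upgrade from $\calE$ to $\calP$: the abstract $q$-holonomic framework does not by itself force the spectrum of dominant eigenvalues of the recursion to consist of roots of unity, and the mechanism guaranteeing it in the classes above -- rational exponents together with periodicity of the indexing set -- is quite special to those situations. For higher rank a genuinely new input appears to be needed, perhaps from Weyl character behavior at roots of unity or from a refined integrality property of the universal $R$-matrix, and this is precisely the obstruction that prevents the present techniques from resolving Conjecture \ref{conj.1} in full generality.
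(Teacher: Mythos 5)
First, note that the statement you are proving is Conjecture \ref{conj.1}, which the paper does not prove in general: it is established only for torus knots and rank $2$ simple Lie algebras (Theorem \ref{thm.1}), via a rather different and more delicate route than the one you sketch. The central gap in your plan is the step ``Granted this, one groups the stable coefficients into series $\Phi_k(n,q)$\dots and the convergence \eqref{eq.kpolystable} follows.'' Knowing that each fixed-index stable coefficient $a_k(n)$ lies in $\calP$ is a \emph{necessary} consequence of $c$-stability (Remark \ref{rem.cstable.coeff}), not a sufficient condition for it. The condition \eqref{eq.kpolystable} for $k\geq 1$ constrains the coefficients of $f_n(q)$ at positions of the form $k(n+1)+j$, which slide with $n$; the results of \cite{Ga2} (and your upgrade from $\calE$ to $\calP$) say nothing about these. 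Moreover your windows of length $n+1$ are not fixed elements of $\calP((q))$: the tails $\Phi_k(n,q)$ must be $n$-independent as power series apart from quasi-polynomial coefficients, and the claimed vanishing ``modulo $q^{(k+1)(n+1)}$'' is exactly what has to be proved, not a consequence of the construction. So even the existence of $\Phi_1$ does not follow from your argument.

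For the torus-knot case your sketch also misses the actual difficulty the paper has to overcome. In the Jones--Rosso sum \eqref{formula} massive cancellation of leading terms occurs, so the minimum degree is attained not near $\mu=0$ but at a minimizer $\mu_{n\l,a}$ of the quadratic form $f^*$ that grows linearly in $n$ (Theorem \ref{thm.min}, itself a case-by-case rank $2$ computation, which is why the paper cannot go beyond rank $2$). One must then recenter the sum at $\mu_{n\l,a}$, replace the summation set $\hat S_{n\l,a}$ by the lattice-polytope intersection $\hat\calL_{n_0\l,a}\cap\hat P_{an\l}$ and bound the degree contribution of the missing points quadratically in $n$ (Proposition \ref{prop.complement} and Proposition \ref{main.cor1}), prove quasi-polynomiality of the plethysm multiplicities via vector partition functions (Theorem \ref{stab.theorem}), and finally invoke a polytope-summation lemma (Lemma \ref{stability.lemma2}) to extract the tail. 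Reindexing by residue classes of $n$, as you propose, is one ingredient (cf.\ Remark \ref{rm.ap}) but by itself does not produce the limit. Finally, your claim that the same strategy handles \emph{arbitrary} knots for $A_2$, $B_2$, $G_2$ via quantum $6j$-symbols is unsupported and exceeds what the paper proves; the conjecture remains open even there.
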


\begin{theorem}
\lbl{thm.1}
Conjecture \ref{conj.1} holds for all torus knots and all rank $2$ simple 
Lie algebras.
\end{theorem}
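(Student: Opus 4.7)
The plan is to apply the Rosso--Jones formula, which presents $J^{\fg}_{T(a,b), V_{n\l}}(q)$ as an explicit expression involving the plethysm decomposition of an $a$-fold cable of $V_{n\l}$ multiplied by a Gaussian shift of the form $q^{Q(\mu,n)}$, with $Q$ a quadratic form whose coefficients depend on $a,b$, and divided by the quantum dimension of $V_{n\l}$ (equivalently, by the Weyl denominator $D(q)$ evaluated at a suitable power of $q$). Since the dominant $q$-exponent is explicitly quadratic in $n$ and in the weights, the quadratic quasi-polynomiality of $\d^*(n)$ is immediate, and one reads off $\d^*(n)$ directly from the formula.

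My next step is to normalize by $q^{-\d^*(n)}$ and expand $1/D(q) \in \BZ[[q]]$ as a formal power series, reorganizing the resulting double sum so that every contribution takes the form $c_{\mu}(n)\, q^{\alpha_\mu(n+1) + \b_\mu}/D(q)$ with $\alpha_\mu \in \BN$ and $\b_\mu$ quasi-polynomial in $n$. Collecting by the value of $\alpha_\mu$ produces the conjectural tail series $F(n,x,q) = \sum_k \Phi_k(n,q)\, x^k$, and the main claim is that each $\Phi_k(n,q)$ lies in $\calP((q))$. This reduces to showing that all the coefficients $c_{\mu}(n)$ are quasi-polynomials in $n$, i.e.\ that only roots of unity (and not more general algebraic numbers) enter as the bases of their $n$-dependent exponential factors.

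For this, two facts combine: the plethysm multiplicities in the Rosso--Jones sum are quasi-polynomials in $n$ by Ehrhart--Kostant theory applied to the weight polytope of $V_{n\l}$, and the Gauss-type quadratic sums modulo $2ab$ generated by the $q^{Q(\mu,n)}$ factor take values that are roots of unity. Together these show that every $\Phi_k(n,q)$ is assembled out of $\calP$-valued coefficients, giving $c$-stability. The verification of Theorem \ref{thm.1} then proceeds case by case for the three rank-$2$ simple Lie algebras $A_2$, $B_2$, and $G_2$.

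The main obstacle will be the bookkeeping for $B_2$ and $G_2$: their two-dimensional weight polytopes partition into several chambers on which the plethysm multiplicity function has different quasi-polynomial formulas, and each chamber must be tracked carefully through the Rosso--Jones sum in order to verify that the resulting tail series actually converges and has $\calP((q))$ coefficients. The sample computations promised in the abstract would then illustrate the explicit form of $F(n,x,q)$ in a few concrete cases such as $T(2,3)$ colored by $V_{n\l}$ for $\fg = \fsl_3$.
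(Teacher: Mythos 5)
Your skeleton --- Rosso--Jones formula, quasi-polynomiality of the plethysm multiplicities via Kostant/vector-partition-function theory, and a reorganization of the sum into a tail series --- matches the paper's strategy. But the step you dismiss as ``immediate,'' namely reading off $\d^*(n)$ from the formula, is in fact the technical heart of the paper and the reason the result is only established in rank $2$. The degree is the minimum of the quadratic form $f^*(\mu)=\frac{b}{2a}(\mu,\mu)+(-1+\frac{b}{a})(\mu,\rho)+\cdots$ over the summation set $S_{n\l,a}$, and one must (i) identify the minimizer $\mu_{n\l,a}$ explicitly and (ii) prove that the plethysm multiplicity there is nonzero, since otherwise cancellation pushes the true degree higher. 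The paper's Remark after Lemma \ref{lem.mula} warns exactly that such cancellation does occur (already for $A_2$ and the trefoil), and that as a consequence the minimizer is of order $O(n)$ rather than $O(1)$ for $A_2$ (e.g.\ $\mu_{n\l,2}=n(m_1-m_2)\l_2$). This forces a recentering $\hat\mu=\mu-\mu_{n\l,a}$ before any tail can be extracted, and it is why the tail involves $x^{\nu^1_{\l,a}}$ and why the answer depends on $n$ through an arithmetic progression. Your proposal contains no mechanism for locating this minimizer or certifying nonvanishing of its coefficient; that is Theorem \ref{thm.min}, proved by a case-by-case analysis for $A_2$, $B_2$, $G_2$ occupying Section \ref{sec.thm.min}.

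Two further points. First, the summation set $S_{\l,a}$ is not a clean lattice-polytope intersection: the paper shows $S_{\l,a}\subseteq \calL_{\l,a}\cap P_{a\l}$ with a genuinely nonempty set of ``missing points'' in general (Remark \ref{rem.notequality}), and must prove (Proposition \ref{prop.complement}, Proposition \ref{main.cor1}) that these missing points have norm $\gtrsim n^2$ so that replacing $S_{n\l,a}$ by the lattice-polytope model does not change the tail. Without this estimate your ``collect by $\alpha_\mu$'' reorganization is not justified, because you cannot apply an Ehrhart-type stability lemma (the paper's Lemma \ref{stability.lemma2}) to a summation set that is not of that form. Second, your appeal to ``Gauss-type quadratic sums modulo $2ab$'' taking root-of-unity values is not relevant here: the exponents $q^{Q(\mu,n)}$ are integer powers of $q$ after the standard rescaling, and the quasi-polynomiality of the stable coefficients comes entirely from the quasi-polynomiality of $m^{\hat\mu+\mu_{n\l,a}}_{n\l,a}$ in $n$ and of the minimizer itself, not from any exponential-sum evaluation.
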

For a precise statement and for a computation of the tail, see Theorem
\ref{thm.stable.limit}.

\begin{remark}
\lbl{rem.alt}
Theorem \ref{thm.GL2} implies that if $K$ is an alternating knot with
$c_-$ crossings and $k \in \BN$, the $k$-th stable coefficient $a_{K,k}(n)$ of 
the sequence $(J_{K,n}(q))$ is given by 
$$
a_{K,k}(n)=(-1)^{c_- n} \coeff(\Phi_{K,0}(q),q^k)
$$
and satisfies the first order linear recurrence relation
$$
a_{K,k}(n+1)-(-1)^{c_-}a_{K,k}(n)=0.
$$
Here $\coeff(f(q),q^k)$ denotes the coefficient of $q^k$ in $f(q) \in 
\BZ((q))$. The stable coefficients $c_{K,k}$ of an alternating knot $K$
are studied in \cite{GV2,GV3}. 
In all examples of the colored Jones polynomial of a knot that have been 
analyzed (this includes alternating knots, torus knots and the 2-fusion
knots), the $k$-stable coefficient is a quasi-polynomial of degree $0$, 
i.e., it is constant on suitable arithmetic progressions. One might think 
that this holds for all simple Lie algebras. Example \ref{ex.T45} below
shows that this is not the case, hence the notion of $c$-stability is 
necessary.
\end{remark}

\subsection{A sample of $q$-series}
\lbl{sub.sample}

In this section we give a concrete sample of tails and $q$-series that
appear in our study.

\begin{example}
\lbl{ex.T23}

Consider the theta series given by \cite{123}
\begin{equation}
\lbl{eq.theta}
\theta_{b,c}(q) =\sum\limits_{s\in\mathbb{Z}} (-1)^s q^{\frac{b}{2}s^2+cs}
\end{equation}
In Section \ref{sec.examples} we will prove the following.

\begin{theorem}
\lbl{thm.T23}
The tail of the $c$-stable sequence $(J^{\mathfrak{g}}_{T(2,b),n\lambda_1}(q))$ 
for $b>2$ odd is given by
\begin{align*}
\frac{\theta_{b,\frac{b}{2}-1}(q)(1+q^3x^2)
+q^3\theta_{b,\frac{b}{2}+2}(q)x }{(1-q)(1-qx)(1-q^2x)} 
\end{align*}
In particular, for the trefoil, i.e., $b=3$ the tail equals to
$$
(q)_\infty\frac{1-qx+q^3x^2}{(1-q)(1-qx)(1-q^2x)} 
$$
\end{theorem}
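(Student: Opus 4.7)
My plan is to derive the formula by applying the Jones--Rosso formula for the colored Jones polynomial of a torus knot, then isolating the lowest-degree stable coefficients in the limit $n\to\infty$ with $x=q^{n+1}$.

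First, I would write
$$
J^{\mathfrak{g}}_{T(2,b),V_{n\lambda_1}}(q)=
q^{-b\langle n\lambda_1,n\lambda_1+2\rho\rangle}
\sum_{\mu}N^{(2)}_{\mu,n\lambda_1}\,q^{\frac{b}{2}\langle\mu,\mu+2\rho\rangle}\,\dim_q V_\mu,
$$
where the $N^{(2)}_{\mu,n\lambda_1}$ are the plethysm multiplicities of $\mu$ in the second Adams operation $\psi^2 V_{n\lambda_1}$. The key input for a rank-$2$ simple Lie algebra is that $V_{n\lambda_1}^{\otimes 2}$ decomposes into a small, uniform family of irreducibles $V_{\mu(s,\epsilon)}$ indexed by an integer $s$ (essentially the ``shift'' along the $\lambda_1$-axis) and a sign $\epsilon$ detecting symmetric vs.\ antisymmetric summands. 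I would enumerate these $\mu(s,\epsilon)$ and compute the Casimir values $\langle\mu,\mu+2\rho\rangle$ explicitly; by design these are quadratic in $s$, producing exponents of the shape $\tfrac{b}{2}s^2+(\tfrac{b}{2}\pm 1)s$ (and $\tfrac{b}{2}s^2+(\tfrac{b}{2}+2)s$) that match the two theta functions $\theta_{b,\frac{b}{2}-1}$ and $\theta_{b,\frac{b}{2}+2}$ in the statement.

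Second, I would handle the quantum dimensions. Using the Weyl denominator formula, $\dim_q V_{\mu(s,\epsilon)}$ splits into a Vandermonde-type product divided by $[\ast]_q$, and upon substituting $x=q^{n+1}$ and dividing by $q^{\delta^*(n)}$, each factor $\frac{1}{1-q^{a+(n+1)}}$ converges in $\BZ((q))[[x]]$ to $\frac{1}{1-q^a x}$. A short computation will then show that the factors $(1-qx)(1-q^2x)$ come from the two positive roots meeting the weight $n\lambda_1$, while the factor $(1-q)$ is the stable remnant of the third positive root which does not contain $\lambda_1$ in its support; this is exactly the reason the tail is uniform across all rank-$2$ simple Lie algebras, since only the plane spanned by the simple roots adjacent to $\lambda_1$ participates.

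Third, I would combine Steps~1 and~2: the sum over $s\in\BZ$ assembles into the two theta series, the $\epsilon=\pm1$ split produces the ``$(1+q^3x^2)$'' and ``$x$'' coefficients, and the product of geometric series yields the denominator $(1-q)(1-qx)(1-q^2x)$. Verifying $c$-stability rigorously amounts to checking that every fixed power $q^m$ coefficient of $q^{-\delta^*(n)}J^{\mathfrak{g}}_{T(2,b),n\lambda_1}(q)$ stabilizes as $n\to\infty$, which follows because only finitely many $s$ contribute to any given power after the substitution. For the trefoil $b=3$, the equality of the stated ratio with $(q)_\infty\frac{1-qx+q^3x^2}{(1-q)(1-qx)(1-q^2x)}$ is an application of the Jacobi triple product identity to $\theta_{3,\frac12}(q)$ and $\theta_{3,\frac72}(q)$.

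The main obstacle I anticipate is twofold: (i) identifying the plethysm decomposition of $V_{n\lambda_1}^{\otimes 2}$ in a form uniform across $A_2$, $B_2$, and $G_2$, and (ii) carefully tracking the framing and the Weyl denominator contributions so that the integer shifts in the theta series and the monomial factors $q^3x^2$, $q^3 x$ come out with the correct exponents. Once those exponents are pinned down, the stability and the geometric-series summation are essentially formal.
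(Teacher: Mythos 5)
Your overall architecture is the same as the paper's: Jones--Rosso for $T(2,b)$, explicit plethysm multiplicities for $\psi_2$, the substitution $x=q^{n+1}$ together with a stability lemma for the factors $\frac{1}{1-q^{a+n}}$, assembly into theta series, and Euler/Jacobi for $b=3$. But there is a genuine gap at the heart of the argument: you defer exactly the step that constitutes the proof. The reason a \emph{single} theta series $\sum_s(-1)^sq^{\frac{b}{2}s^2+(\frac{b}{2}-1)s}$ appears, rather than a two-dimensional lattice sum, is that the plethysm multiplicity $m^{u_1\l_1+u_2\l_2}_{n\l_1,2}$ is supported precisely on the line $u_1+2u_2=2n$ with $u_1$ even, where it equals a sign. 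Establishing this requires two separate inequalities: a lower bound $u_1+2u_2\geq 2n$ coming from the explicit Kostant-type computation of the multiplicities (Corollary \ref{cor.nonzero} via the case analysis of Section \ref{sub.sub.A2}), and an upper bound $u_1+2u_2\leq 2n$ coming from the polytope constraint $S_{n\l_1,2}\subset P_{2n\l_1}$ (Lemma \ref{lem.ineq10}). Your proposal asserts that the decomposition is ``a small, uniform family indexed by $s$'' and that the exponents ``by design'' match the theta functions, and then lists this identification as an anticipated obstacle. Until that collapse to a line is proved, the exponents $\frac{b}{2}s^2+(\frac{b}{2}\pm1)s$, the monomials $q^3x^2$ and $q^3x$, and indeed the entire shape of the answer are unverified; as you yourself note, everything after that point is formal.

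A second, concrete error: your claim that ``the tail is uniform across all rank-$2$ simple Lie algebras, since only the plane spanned by the simple roots adjacent to $\lambda_1$ participates'' is false. The denominator of the tail is the stable limit of $\prod_{\a\succ0}(1-q^{(n\l_1+\rho,\a)})$, which has one factor per positive root: three for $A_2$ (giving exactly $(1-q)(1-qx)(1-q^2x)$), but four for $B_2$ and six for $G_2$, several of which depend on $n$. The theorem, despite the generic $\fg$ in its statement, is proved in the paper only for $A_2$ (the minimizer $\mu_{n\l_1,2}=n\l_2$ and the multiplicity formulas used are the $A_2$ ones), and the stated formula cannot hold verbatim for $B_2$ or $G_2$. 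Finally, two smaller slips in your displayed Jones--Rosso formula: the twist contributes $q^{\frac{b}{4}\langle\mu,\mu+2\rho\rangle}$ for $a=2$, not $q^{\frac{b}{2}\langle\mu,\mu+2\rho\rangle}$, and the prefactor $1/d_{n\l_1}$ is missing, which is precisely the source of the denominator you later need.
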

\end{example}

\begin{example}
\lbl{ex.T45}
The tail of the $c$-stable sequence $(J^{A_2}_{T(4,5),n\rho}(q))$ is given by  
\begin{align*}
\frac{1}{(1-xq)^2(1-x^2q^2)}
\left( A_0(q) + n A_1(q) \right)
\end{align*}
where $A_0(q), A_1(q) \in \BZ[[q]]$ are given explicitly in Proposition
\ref{prop.A2.T45}. The first few terms of those $q$-series are given by
\begin{align*}
A_0(q) &=
1-2 q+2 q^3-q^4+q^{48}-2 q^{55}-2 q^{57}+2 q^{63}+2 q^{66}+2 q^{69}+2 q^{75}
-q^{76}-2 q^{78}-2 q^{81}
\\ & -2 q^{82}-q^{84}-2 q^{85}+2 q^{87} + \dots
\\
A_1(q) &=
1-2 q+2 q^3-q^4-q^6+2 q^9+2 q^{10}-2 q^{12}-4 q^{15}-q^{18}+2 q^{19}+2 q^{21}
+3 q^{22}-2 q^{27}
\\ & -2 q^{30}-2 q^{33}+4 q^{36}-q^{42}-2 q^{46}+q^{48}-2 q^{49}
+2 q^{51}+2 q^{55}+4 q^{57}-2 q^{58}+2 q^{60}-4 q^{64}
\\ & -2 q^{66}-2 q^{69}
-2 q^{73}-q^{76}+4 q^{78}+2 q^{81}+2 q^{82}+q^{84}+2 q^{85}-2 q^{87} + \dots
\end{align*}
It follows that for every fixed $k$, the $k$-th stable coefficient $a_k(n)$
of $(J^{A_2}_{T(4,5),n\rho}(q))$ satisfies the linear recursion relation
$$
a_k(n+2)-2 a_k(n+1)+ a_k(n)=0
$$
for all $n$. 

We leave as an exercise to the reader to show that
\begin{align*}
A_1(q) &=
\sum_{m_1,m_2 \in \BZ} q^{20(m_1^2 + 3 m_1 m_2 + 3 m_2^2)
+ 2 m_1 + 3 m_2}
(1 - q^{4 m_1 +1}) (1 - q^{4 m_1 + 12 m_2 +1}) (1 - q^{8 m_1 + 12 m_2+2})
\\
&=(q)_\infty\left( \sum_{n \in \BZ} (-1)^n q^{\frac{15n^2+n}{2}} - 
\sum_{n \in \frac{3}{5}+\BZ} (-1)^n q^{\frac{15n^2+n}{2}} \right) \\
&=(q)_\infty\left(
(q^7,q^{15})_\infty (q^8,q^{15})_\infty (q^{15},q^{15})_\infty-q (1-q^2) 
(q^{13},q^{15})_\infty (q^{15},q^{15})_\infty (q^{17},q^{15})_\infty \right)
\end{align*}
where as usual, $(x,q)_\infty=\prod_{k=0}^\infty (1-q^k x)$ and 
$(q)_\infty=(q,q)_\infty$.
The book \cite{123} is an excellent source for proving such identities.
\end{example}


\section{The colored Jones polynomial of a torus knot}
\lbl{sec.cjtorus}

\subsection{The Jones-Rosso formula}
\lbl{sub.JR}

To verify Conjecture \ref{conj.1} for all torus knots $T(a,b)$ (where
$0<a<b$ and $a$ and $b$ are coprime integers), we will
use the formula of Jones-Rosso \cite{JR}. It states that

\begin{equation}
\lbl{eq.JR}
J^{\fg}_{T(a,b),\l}(q)=\frac{\th_{\l}^{-ab}}{d_{\l}}
\sum_{\mu \in S_{\l,a}} m^{\mu}_{\l,a} d_{\mu} \th_{\mu}^{\frac{b}{a}}
\end{equation}
where 
\begin{itemize}
\item
$d_{\lambda}$ is the {\em quantum dimension} of $V_{\lambda}$ and $\th_{\l}$ is the 
eigenvalue of the {\em twist} operator on the representation $V_{\l}$ given by:
\begin{equation}
d_{\l}=\prod_{\a>0} \frac{[(\l+\rho,\a)]}{[(\rho,\a)]},
\qquad \th_{\l}=q^{\frac{1}{2}(\l,\l+2\rho)}, 
\qquad [n]=\frac{q^{\frac{n}{2}}-q^{-\frac{n}{2}}}{q^{\frac{1}{2}}-q^{-\frac{1}{2}}} \,.
\end{equation}
\item
$m^{\mu}_{\l,a} \in \BZ$ is the multiplicity of $V_{\mu}$ in the {\em
$a$-plethysm} of $V_\l$ where where $\psi_a$ denote the $a$-Adams operation. 
I.e., we have:
\begin{equation}
\lbl{eq.psila}
\psi_a(ch_{\l})=\sum_{\mu \in S_{\l,a}} m^{\mu}_{\l,a} ch_{\mu}
\end{equation}
where $ch_\l$ is the formal character of $V_\l$. 
\end{itemize}

To describe the plethysm multiplicity $m^{\mu}_{\l,a}$ and the summation set
$S_{\l,a}$, recall the
Kostant multiplicity formula \cite{Ko1} which expresses the multiplicities 
$m_{\l}^{\mu}$ of the $\mu$-weight space of $V_{\l}$ in terms of the 
Kostant partition function $p$:
\begin{equation}
\lbl{eq.komult}
m_\l^\mu=\sum\limits_{\sigma\in\W}(-1)^\sigma p(\sigma(\l+\rho)-\mu-\rho)
\end{equation}
As usual, $W$ is the Weyl group of the simple Lie algebra $\fg$ and $\rho$
is half the sum of its positive roots.

\begin{lemma}
\lbl{lem.mula}
\rm{(a)} We have:
\begin{equation}
\lbl{eq.mula}
m^{\mu}_{\l,a}=\sum_{\sigma\in \W}(-1)^\sigma m^{\frac{\mu+\rho-\sigma(\rho)}{a}}_{\l}
\end{equation}
where the summation is over the elements $\sigma \in \W$ 
such that $\frac{\mu+\rho-\sigma(\rho)}{a}$  is 
in the weight lattice (but not necessarily a dominant weight). 
\newline
\rm{(b)} It follows that
\be
\lbl{eq.Sla}
S_{\l,a}= \left[ \bigcup\limits_{\s \in \W} \left( \s(\rho)-\rho+ a 
\Pi_{\l} \right) \right] \cap \La^+
\ee
where $\Pi_\l$ is the set of all weights of $V_\l$.
\end{lemma}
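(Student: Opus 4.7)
The plethysm multiplicity $m^{\mu}_{\l,a}$ is defined by \eqref{eq.psila}, and the plan is to extract it by a standard Weyl-antisymmetrization argument applied to the Adams-operation identity $\psi_a(e^\nu)=e^{a\nu}$ together with Weyl's character formula. This reduces part (a) to matching the coefficient of a single exponential $e^{\mu+\rho}$ on the two sides of \eqref{eq.psila} after clearing the Weyl denominator.

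Concretely, set $A_\nu=\sum_{\sigma\in\W}(-1)^\sigma e^{\sigma(\nu)}$ and use Weyl's character formula $ch_\mu=A_{\mu+\rho}/A_\rho$. Multiplying \eqref{eq.psila} by $A_\rho$ gives
\be
A_\rho\cdot \psi_a(ch_\l) \;=\; \sum_{\mu\in S_{\l,a}} m^{\mu}_{\l,a}\, A_{\mu+\rho}
\ee
in the group ring $\BZ[\La]$ of the weight lattice $\La$. Using the weight decomposition $ch_\l=\sum_{\nu\in\Pi_\l} m^\nu_\l\, e^\nu$ and $\psi_a(e^\nu)=e^{a\nu}$, the left-hand side expands as
\be
A_\rho\cdot \psi_a(ch_\l)\;=\;\sum_{\sigma\in\W}\sum_{\nu\in\Pi_\l}(-1)^\sigma m^\nu_\l\, e^{\sigma(\rho)+a\nu}.
\ee

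Both sides of the resulting identity are Weyl-antisymmetric, hence uniquely determined by the coefficients of $e^{\mu+\rho}$ for $\mu$ a dominant weight. The right-hand side contributes the single coefficient $m^{\mu}_{\l,a}$, whereas on the left-hand side the contributions to $e^{\mu+\rho}$ come from pairs $(\sigma,\nu)$ with $\sigma(\rho)+a\nu=\mu+\rho$, i.e.\ $\nu=\tfrac{\mu+\rho-\sigma(\rho)}{a}$. Using the standard convention that $m^\nu_\l=0$ when $\nu$ is not in the weight lattice, this yields the formula of part (a) exactly as stated, the sum being effectively restricted to those $\sigma\in\W$ for which $\tfrac{\mu+\rho-\sigma(\rho)}{a}\in\La$. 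Part (b) is then immediate by inspection: if $m^{\mu}_{\l,a}\ne 0$ for some dominant $\mu$, at least one summand in (a) must be non-zero, which forces $\tfrac{\mu+\rho-\sigma(\rho)}{a}\in\Pi_\l$ for some $\sigma\in\W$, and hence $\mu\in\sigma(\rho)-\rho+a\Pi_\l$; conversely, any $\mu$ in this union is a candidate index, and adjoining indices with $m^{\mu}_{\l,a}=0$ does not alter the Jones-Rosso expression \eqref{eq.JR}.

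I do not expect a real obstacle: the whole proof is a direct coefficient extraction. The only delicate point is the bookkeeping of the convention $m^\nu_\l=0$ for $\nu\notin\La$, which is what formally restricts the Weyl sum in (a) to the set of group elements described in the statement, and makes (b) consistent with the plethysm support.
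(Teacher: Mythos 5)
Your proposal is correct and follows essentially the same route as the paper: multiply \eqref{eq.psila} by the Weyl denominator $\omega(\rho)=\sum_{\sigma\in\W}(-1)^\sigma e_{\sigma(\rho)}$, use $\psi_a(e_\nu)=e_{a\nu}$ and the Weyl character formula, and extract the coefficient of $e^{\mu+\rho}$ for dominant $\mu$, where only $\sigma=1$ contributes on the character side because $\mu+\rho$ is regular. Your handling of part (b), including the remark that adjoining indices with vanishing multiplicity is harmless, matches the paper's one-line argument via $m_\l^\nu\neq 0\iff\nu\in\Pi_\l$.
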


\begin{remark}
The Jones-Rosso formula \eqref{eq.JR} combined with Equations 
\eqref{eq.komult} and \eqref{eq.mula} imply that that we can write
\be
\lbl{eq.fWW}
J^{\fg}_{T(a,b),\l}(q)=\sum_{\s,\s' \in W} J^{\fg}_{T(a,b),\l,\s,\s'}(q)
\ee
for some rational functions $J^{\fg}_{T(a,b),\l,\s,\s'}(q)$. It is easy to see 
that the sequences $(J^{\fg}_{T(a,b),n\l,\s,\s'}(q))$ are $q$-holonomic (with
respect to $n$) and $c$-stable. If cancellation of the leading and trailing
terms did not occur in Equation \eqref{eq.fWW}, it would imply a short proof
of Theorem \ref{thm.1} for all torus knots and all simple Lie algebras.
Unfortunately, after we perform the sum in Equation \eqref{eq.fWW} 
cancellation occurs and the degree of the summand is much lower than the 
degree of the sum. This already happens for $A_2$ and the trefoil, an 
alternating knot. This cancellation is responsible for the minimizer 
$\mu_{\l,a}$ to be of order $O(\l)$ rather than $O(1)$ in case $A_2$, 
part (b) of Theorem \ref{thm.min}. 
\end{remark}

\subsection{The degree of the colored Jones polynomial}
\lbl{sub.shifted}

The Jones-Rosso formula can be written in the form
\begin{align}
\lbl{formula}
J^{\fg}_{T(a,b),\l}(q) 
&= \frac{q^{-\frac{ab}{2}(\l,\l)-(-1+ab)(\l,\rho)}}{\prod
\limits_{\a\succ 0}(1-q^{(\l+\rho,\a)})}
\sum\limits_{\mu\in S_{\l,a}}q^{\frac{b}{2a}(\mu,\mu)+(-1+\frac{b}{a})(\mu,\rho)}
\prod\limits_{\a\succ 0}(1-q^{(\mu+\rho,\a)}) \,.
\end{align}
When the dominant weight $\lambda$ and the torus knot $T(a,b)$ is fixed, 
the minimum the and maximum degree of the summand are positive-definite 
quadratic forms $f^*(\mu)$ and $f(\mu)$ given by
\begin{subequations}
\begin{align}
\lbl{min.deg}
f^*(\mu) &=\frac{b}{2a}(\mu,\mu)+\left(-1+\frac{b}{a}\right)(\mu,\rho)
-\frac{ab}{2}(\l,\l)-(-1+ab)(\l,\rho)\,
\\
\lbl{max.deg}
f(\mu)&=\frac{b}{2a}(\mu,\mu)+\left(1+\frac{b}{a}\right)(\mu,\rho)
-\frac{ab}{2}(\l,\l)-(1+ab)(\l,\rho)\,
\end{align}
\end{subequations}

In Section \ref{sec.thm.max} we will prove the following.

\begin{theorem}
\lbl{thm.max}
Fix a simple Lie algebra $\fg$ and a torus knot $T(a,b)$. The quadratic form 
$f(\mu)$ achieves maximum uniquely at $M_{\l,a}=a \l\in S_{a,\l}$. 
Moreover, $m^{M_{\l,a}}_{\l,a}=1$.
\end{theorem}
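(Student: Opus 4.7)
My plan is to decouple the maximization of $f(\mu)$ by completing the square. One checks
\[
f(\mu) \;=\; \tfrac{b}{2a}(\mu + \rho, \mu + \rho) + (\mu + \rho, \rho) + (\text{terms independent of }\mu),
\]
and since $b/a > 0$ both $\mu$-dependent coefficients are positive, so it suffices to prove that each of $(\mu + \rho, \mu + \rho)$ and $(\mu + \rho, \rho)$ is maximized on $S_{\l,a}$ at $\mu = a\l$, with at least one of the bounds saturated uniquely there. Using Lemma \ref{lem.mula}(b), I write any $\mu \in S_{\l, a}$ as $\mu = \sigma(\rho) - \rho + a\nu$ for some $\sigma \in \W$ and $\nu \in \Pi_\l$, so that $\mu + \rho = \sigma(\rho) + a\nu$ and
\[
(\mu+\rho, \rho) = (\sigma(\rho), \rho) + a(\nu, \rho), \qquad (\mu+\rho, \mu+\rho) = (\rho,\rho) + 2a(\sigma(\rho), \nu) + a^2(\nu, \nu).
\]

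The proof then reduces to three classical inequalities from representation theory: (i) $(\sigma(\rho), \rho) \le (\rho, \rho)$ with equality iff $\sigma = 1$; (ii) $(\nu', \nu') \le (\l, \l)$ for $\nu' \in \Pi_\l$ with equality iff $\nu' \in \W\l$; and (iii) $(\rho, \nu') \le (\rho, \l)$ for $\nu' \in \Pi_\l$ with equality iff $\nu' = \l$. Inequality (iii) comes from $\Pi_\l \subset \mathrm{conv}(\W\l)$ together with $(\rho, \l - w\l) \ge 0$, which holds because $\l - w\l$ is a nonnegative integer combination of positive roots and $\rho$ is strictly dominant, so $(\rho, \alpha) > 0$ for every positive root $\alpha$; extremality of $\l$ in its orbit under pairing with $\rho$ yields the stated equality case. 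Applying (iii) once to $\nu$ and once to $\sigma^{-1}\nu \in \Pi_\l$ gives $(\nu, \rho) \le (\l, \rho)$ and $(\sigma(\rho), \nu) = (\rho, \sigma^{-1}\nu) \le (\rho, \l)$. Combining with (i) and (ii), both inner products are bounded above by their values at $\mu = a\l$, and the bound on $(\mu+\rho, \rho)$ alone is saturated exactly when $\sigma = 1$ and $\nu = \l$, i.e., $\mu = a\l$, which forces uniqueness.

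For the multiplicity, I will apply Lemma \ref{lem.mula}(a):
\[
m^{a\l}_{\l, a} \;=\; \sum_{\sigma \in \W}(-1)^\sigma\, m^{(a\l + \rho - \sigma(\rho))/a}_\l,
\]
where the sum runs over those $\sigma$ such that the superscript lies in the weight lattice. The term $\sigma = 1$ contributes $m^\l_\l = 1$. For $\sigma \neq 1$, the element $\rho - \sigma(\rho) = \sum_{\alpha \in \Phi^+_\sigma} \alpha$ is a nonzero nonnegative combination of positive roots, so whenever $\l + (\rho - \sigma(\rho))/a$ lies in the weight lattice it is strictly greater than $\l$ in the dominance order and hence is not a weight of $V_\l$, making $m^{(a\l+\rho-\sigma(\rho))/a}_\l = 0$. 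Thus $m^{a\l}_{\l, a} = 1$.

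I do not expect a deep obstacle; the care needed is mostly in the bookkeeping of equality cases. The potentially delicate point is that the signed Weyl sum in Lemma \ref{lem.mula}(a) could in principle produce cancellations that alter the leading multiplicity, but it collapses cleanly here because, on the nose, no nontrivial Weyl translation $\rho - \sigma(\rho)$ can push $a\l$ back inside $a \Pi_\l$.
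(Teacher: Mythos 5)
Your proof is correct, and your multiplicity computation is exactly the paper's: the $\sigma=1$ term contributes $m_\l^\l=1$, and every other term vanishes because $\l+(\rho-\sigma(\rho))/a$ strictly dominates $\l$ and so cannot be a weight of $V_\l$. The maximization step is organized differently, though it rests on the same underlying positivity. The paper writes $\mu=a\nu+\sigma(\rho)-\rho=a\l-\a$ with $\a$ a nonnegative combination of positive roots and uses one lemma: if $\mu$ and $\mu+\a$ are both dominant and $\a\succ 0$, then $(\mu,\mu)<(\mu+\a,\mu+\a)$ (since $(\mu,\a)\ge 0$ and $(\a,\a)>0$), which together with $(\a,\rho)\ge 0$ bounds the quadratic and linear parts of $f$ simultaneously. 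Your completion of the square to $\tfrac{b}{2a}(\mu+\rho,\mu+\rho)+(\mu+\rho,\rho)$ and the split $\mu+\rho=\sigma(\rho)+a\nu$ replaces this with the three orbit-extremality inequalities for $\rho$ and $\Pi_\l$. The trade-off is that the paper's argument leans on the dominance of $\mu$ itself (it only maximizes over $S_{\l,a}\subset\Lambda^+$), whereas yours never uses $\mu\in\Lambda^+$ and therefore establishes the maximum over the larger set $\bigcup_\sigma\left(\sigma(\rho)-\rho+a\Pi_\l\right)$ before intersecting with the dominant cone; the paper's version is slightly shorter. Both arguments correctly pin the unique equality case to $\sigma=1$, $\nu=\l$, i.e.\ $\mu=a\l$.
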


The next theorem states that $f^*(\mu)$ has a unique minimizer which we 
denote by $\mu_{\l,a}$  and describes $\mu_{\l,a}$ explicitly for all simple
Lie algebras of rank $2$. Below, $\{\l_1,\l_2\}$ are the dominant weights
of a simple Lie elgebra of rank $2$. Its proof
is given in Section \ref{sec.thm.min} using a case-by-case analysis.

\begin{theorem}
\lbl{thm.min}
When $\fg$ is a simple Lie algebra of rank $2$, then
\newline
\rm{(a)}The quadratic form $f^*(\mu)$ achieves minimum uniquely at 
$\mu_{\l,a}\in S_{a,\l}$ and  $m^{\mu}_{\l,a} \neq 0$.
\newline
\rm{(b)} For a dominant weight $\l=m_1\l_1+m_2\l_2$, we have 
\newline
For $A_2$: 
$$
\mu_{\l,2}= \begin{cases}
(m_1-m_2)\l_2 & \text{if} \,\, m_1 \geq m_2 \\
(m_2-m_1)\l_1 & \text{if}  \,\, m_1 \leq m_2
\end{cases}
\qquad
\mu_{\l,3}=0
$$
and 
$$
\mu_{\l,a}= \begin{cases}
0 & \text{if} \,\, m_1 \equiv m_2 \bmod 3\\
(a-3)\l_1 & \text{if}  \,\, m_1 \equiv m_2 +1 \bmod 3 \\
(a-3)\l_2 & \text{if}  \,\, m_1 \equiv m_2 +2 \bmod 3 
\end{cases}
\quad
\text{for $a \geq 4$}
$$
For $B_2$:
$$
\mu_{\l,2}=\begin{cases}
\l_1 & \text{if} \,\, m_1=0 \,,  m_2\equiv 1\bmod 2\\
0 &\text{otherwise} 
\end{cases}
\qquad
\mu_{\l,3}= \begin{cases}
0 & \text{if} \,\, m_1,m_2 \equiv 0 \bmod 2 \\
2\lambda_2 & \text{if} \,\, m_1 \equiv 1 \bmod 2\,, m_2 \equiv 0 \bmod 2 \\
\lambda_1+\lambda_2 &\text{if} \,\, m_2 \equiv 1 \bmod 2 
\end{cases}
$$
$$
\mu_{\l,4}=0
\qquad
\mu_{\l,a}= \begin{cases}
0 & \text{if} \,\, m_2 \equiv 0 \bmod 2 \\
(a-4)\lambda_2 & \text{if} \,\,m_2 \equiv 1 \bmod 2 
\end{cases}
\quad
\text{for $a\geq 5$}
$$
For $G_2$:
$$
\mu_{\l,a}=0 \,\, \text{for all $a\geq 2$}
$$
\end{theorem}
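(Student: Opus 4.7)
The plan is to minimize $f^*(\mu)$ on $S_{\l,a}$ by combining a convexity argument to localize candidate minima with a careful Kostant-multiplicity check to rule out cancellation at those candidates. On the dominant chamber, the form $g(\mu) := f^*(\mu)-f^*(0) = \frac{b}{2a}(\mu,\mu) + \frac{b-a}{a}(\mu,\rho)$ is strictly convex and non-negative (since $b>a>0$ and $(\mu,\rho)\geq 0$ for $\mu$ dominant), vanishing only at $\mu=0$. Therefore, for any threshold $C$, only finitely many dominant weights satisfy $g(\mu)\leq C$, and for small $a$ this list of candidate minima is very short.

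The next step is to describe which of the small candidates actually lie in $S_{\l,a}$. By Lemma \ref{lem.mula}(b), a dominant weight $\mu$ belongs to the formal set $S_{\l,a}$ iff there exist $\s \in W$ and $\nu \in \Pi_\l$ with $\mu + \rho - \s(\rho) = a\nu$. This is a two-layer check: the lattice condition $\mu + \rho - \s(\rho)\in a\La$ depends only on the class of $\mu$ modulo $a\La$ and of $\s(\rho)$; and $\nu \in \Pi_\l$ is controlled by the class of $\l$ in $\La/Q$ (where $Q$ is the root lattice), together with the standard dominance-order polytope condition (since $\Pi_\l\subseteq\l + Q$). For rank two, $\La/Q\cong \BZ/3$ for $A_2$, $\BZ/2$ for $B_2$, and trivial for $G_2$ --- this is exactly the three/two/one-case pattern in the conclusion.

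For each simple rank-two Lie algebra $\fg$ and each $a$ in the relevant range, the proof then proceeds by first exhibiting $\s_0 \in W$ and $\nu_0 \in \Pi_\l \cap \La^+$ realizing $\mu_{\l,a} + \rho - \s_0(\rho) = a\nu_0$ --- for $a\geq 4$ (resp.\ $a\geq 5$ for $B_2$) one can take $\s_0$ to be the product of the two simple reflections --- placing $\mu_{\l,a}$ in the formal support. One then rules out every strictly smaller dominant weight $\mu$ either on the lattice/polytope grounds above, or by showing that $m^\mu_{\l,a}=0$ due to cancellation in \eqref{eq.mula}. Finally, non-vanishing of $m^{\mu_{\l,a}}_{\l,a}$ is checked directly from \eqref{eq.mula} by enumerating the contributing $\s \in W$.

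The principal obstacle is this last cancellation check. The minimizer generally lies deep inside the weight polytope --- Theorem \ref{thm.min}(b) for $A_2$, $a=2$ makes this explicit, since $\mu_{\l,2}$ grows linearly in $\l$ --- so several Weyl-group elements contribute with opposite signs to $m^\mu_{\l,a}$, and a candidate in the formal support may nonetheless have vanishing multiplicity. (The remark preceding Subsection \ref{sub.shifted} identifies exactly this cancellation as the source of difficulty.) Controlling it requires enumerating Kostant partition-function values on the shifts $(\mu + \rho - \s\rho)/a$, and will be heaviest for $G_2$ (where $|W|=12$) and for the small cases $a = 2,3$, where $\mu_{\l,a}$ varies with the class of $\l$ mod $Q$; the uniform ranges should be cleaner because a single Weyl element dominates the sum.
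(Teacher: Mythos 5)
Your outline tracks the paper's own strategy quite closely: the positivity of $g(\mu)=f^*(\mu)-f^*(0)$ on the dominant chamber is exactly the paper's Lemma \ref{lem.pos} (whence Corollary \ref{lem.unique}: if $m^0_{\l,a}\neq 0$ then $0$ is the unique minimizer), the case pattern keyed to $\La/\La_r$ is the same, and for $a\geq 4$ the paper likewise shows that at most one Weyl element contributes to \eqref{eq.mula} (since $(\rho-\s_1(\rho))-(\rho-\s_2(\rho))\notin a\La_r$ unless $\s_1=\s_2$), which is why the uniform ranges are clean, just as you predict.

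However, there is a genuine gap in your localization step. You propose to confine the candidates to the sublevel set $\{\mu\in\La^+: g(\mu)\leq C\}$ with $C=g(\mu_{\l,a})$ and then dispose of every strictly smaller dominant weight by enumeration, asserting that this list is ``very short.'' For $A_2$ with $a=2$ the claimed minimizer is $(m_1-m_2)\l_2$, so $C$ grows quadratically in $m_1-m_2$ and the sublevel set contains on the order of $(m_1-m_2)^2$ dominant weights; the candidate list is unbounded as $\l$ varies, and no finite enumeration of Kostant partition function values can rule them all out. What is needed --- and what the paper actually supplies --- is a \emph{uniform} vanishing criterion: a closed-form computation of $m^{\mu}_{\l,2}$ in the four parity classes of $(u_1,u_2)$ (using $p(u,v)=1+\min(u,v)$), from which Corollary \ref{cor.nonzero} extracts the constraint that $m^{\mu}_{\l,2}\neq 0$ forces $u_1+2u_2\geq 2(m_1-m_2)$; the quadratic form is then minimized in closed form over that half-plane by completing the square, not over a finite list. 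You correctly flag the cancellation in \eqref{eq.mula} as the principal obstacle, but flagging it is not overcoming it: without these explicit multiplicity formulas (and the analogous computations from \eqref{eq.kostantB2} for $B_2$ and the $G_2$ partition function), the argument does not close in precisely the cases you yourself identify as heaviest. The same caveat applies, less severely, to $B_2$ with $a=2,3$: there the minimizer is bounded, but verifying that the competing weights have vanishing multiplicity is still a computation whose inputs depend on $\l$, not a finite check independent of $\l$.
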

Theorem \ref{thm.min} part (b) implies the following.
\begin{corollary}
$\mu_{n\l,a}$ is a piecewise quasi-linear function of $n$ for 
$n \gg 0$. 
\end{corollary}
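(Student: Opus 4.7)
The plan is to derive the corollary directly from Theorem \ref{thm.min}(b) by substituting $n\l$ in place of $\l$ and inspecting the finite list of cases. Writing $\l = m_1\l_1 + m_2\l_2$ gives $n\l = (nm_1)\l_1 + (nm_2)\l_2$, so every condition on $(m_1, m_2)$ in Theorem \ref{thm.min}(b) becomes the corresponding condition on $(nm_1, nm_2)$.

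First I will handle $A_2$. For $a = 2$, the branch selector $m_1 \geq m_2$ versus $m_1 \leq m_2$ is invariant under $\l \mapsto n\l$ for $n \geq 1$, so $\mu_{n\l,2}$ equals either $n(m_1-m_2)\l_2$ or $n(m_2-m_1)\l_1$, which is linear in $n$. For $a = 3$ the value is identically $0$. For $a \geq 4$ the trichotomy hinges on $n(m_1-m_2) \bmod 3$, which partitions $\BN$ into at most three arithmetic progressions of common difference $3$, on each of which $\mu_{n\l,a}$ takes a constant value in $\{0, (a-3)\l_1, (a-3)\l_2\}$. The same bookkeeping applies to $B_2$: the cases are governed by the parities of $nm_1$ and $nm_2$ together with the $\l$-fixed datum of whether $m_1 = 0$, and on each residue class modulo $2$ the value of $\mu_{n\l,a}$ is a fixed dominant weight. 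For $G_2$, $\mu_{n\l,a} = 0$ for every $a \geq 2$, which is trivially quasi-linear.

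Combining these cases, in every instance $\mu_{n\l,a}$ admits the shape $\beta(n)\,n + \alpha(n)$ with $\alpha, \beta$ periodic in $n$ (with period dividing $3$, $2$, or $1$), which matches the definition of a quasi-linear function in the sense of Section \ref{sub.LT0}. The word \emph{piecewise} refers to the finitely many $\l$-dependent branches that occur (most visibly in the $A_2$, $a = 2$ case), on each of which the resulting function of $n$ is genuinely quasi-linear. The only obstacle here is bookkeeping through the case list of Theorem \ref{thm.min}(b); there is no substantial analytic or algebraic difficulty, since the corollary is an immediate consequence of the explicit formulas already established.
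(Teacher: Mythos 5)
Your proposal is correct and is exactly the argument the paper intends: the corollary is stated with only the remark that ``Theorem \ref{thm.min} part (b) implies the following,'' and your case-by-case substitution of $n\l$ for $\l$ in the explicit formulas (noting that the branch conditions become congruences on $n$ with modulus $2$ or $3$, so that on each arithmetic progression $\mu_{n\l,a}$ is affine in $n$) is precisely that implication, consistent with how the paper later uses it in Remark \ref{rm.ap}.
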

Let $\d^*_{K}(\l)$ and $\d_{K}(\l)$ denote the {\em minimum} and the 
{\em maximum} degree of the colored Jones polynomial $J^{\fg}_{K,V_{\l}}(q)$
with respect to $q$.

\begin{corollary}
\lbl{cor.minmax}
We have:
\begin{subequations}
\begin{align}
\lbl{eq.mindeg}
\d^*_{T(a,b)}(\l)&= f^*(\mu_{\l,a}) \\
\lbl{eq.maxdeg}
\d_{T(a,b)}(\l)&= f(a\l)\,.
\end{align}
\end{subequations}
\end{corollary}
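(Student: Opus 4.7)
The plan is to read off $\d^*_{T(a,b)}(\l)$ and $\d_{T(a,b)}(\l)$ directly from the Jones--Rosso expansion \eqref{formula}, by combining the definitions \eqref{min.deg}--\eqref{max.deg} with Theorems \ref{thm.max} and \ref{thm.min}, and by ruling out accidental cancellation at the extreme $q$-degrees of the Weyl sum.

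First I would record the $q$-degrees of the three building blocks of \eqref{formula}. The monomial prefactor $A = q^{-\frac{ab}{2}(\l,\l)-(-1+ab)(\l,\rho)}$ has minimum and maximum $q$-degree both equal to its exponent. The denominator $B = \prod_{\a\succ 0}(1-q^{(\l+\rho,\a)})$ has constant term $+1$, so $\d^*(B)=0$, and leading term of degree $\sum_{\a\succ 0}(\l+\rho,\a) = 2(\l+\rho,\rho)$ with nonzero coefficient, so $\d(B) = 2(\l+\rho,\rho)$. Each $\mu$-summand $C_\mu = q^{\frac{b}{2a}(\mu,\mu)+(-1+\frac{b}{a})(\mu,\rho)}\prod_{\a\succ 0}(1-q^{(\mu+\rho,\a)})$ has $\d^*(C_\mu) = \frac{b}{2a}(\mu,\mu)+(-1+\frac{b}{a})(\mu,\rho)$ and $\d(C_\mu) = \d^*(C_\mu) + 2(\mu+\rho,\rho)$, both with nonzero coefficients. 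A short arithmetic check shows that the $\mu$-th contribution $(A/B)\,m^{\mu}_{\l,a}\,C_\mu$ to \eqref{formula} has minimum $q$-degree exactly $f^*(\mu)$ and maximum $q$-degree exactly $f(\mu)$, matching \eqref{min.deg}--\eqref{max.deg}.

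Next I would invoke the two theorems. Theorem \ref{thm.max} asserts that $f$ is uniquely maximized on $S_{\l,a}$ at $\mu = a\l$ with $m^{a\l}_{\l,a}=1$; since every other $\mu$-contribution is supported in strictly lower $q$-degree, the top coefficient of $J^{\fg}_{T(a,b),\l}(q)$ survives without cancellation, giving $\d_{T(a,b)}(\l) = f(a\l)$ and hence \eqref{eq.maxdeg}. Symmetrically, Theorem \ref{thm.min}(a) asserts that $f^*$ is uniquely minimized on $S_{\l,a}$ at $\mu = \mu_{\l,a}$ with $m^{\mu_{\l,a}}_{\l,a}\neq 0$; because $B$ has constant term $+1$, the bottom coefficient $m^{\mu_{\l,a}}_{\l,a}$ survives, giving $\d^*_{T(a,b)}(\l) = f^*(\mu_{\l,a})$ and hence \eqref{eq.mindeg}.

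The corollary is essentially a bookkeeping consequence of Theorems \ref{thm.max} and \ref{thm.min}. The only point requiring care is ruling out cancellation at the leading and trailing ends of the Weyl sum, and this is precisely what the uniqueness of the extremizers together with the nonvanishing of the plethysm multiplicities at those extremizers guarantees; I do not foresee any further technical obstacle.
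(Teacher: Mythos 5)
Your proposal is correct and follows exactly the argument the paper intends (the corollary is stated without an explicit proof as an immediate consequence of Theorems \ref{thm.max} and \ref{thm.min}): read off the extreme degrees of each $\mu$-summand of \eqref{formula} as $f^*(\mu)$ and $f(\mu)$, then use the uniqueness of the extremizers and the nonvanishing of the plethysm multiplicity there to rule out cancellation at the bottom and top coefficients. Your degree bookkeeping (in particular treating the maximum degree of the ratio as degree of numerator minus degree of the denominator $\prod_{\a\succ 0}(1-q^{(\l+\rho,\a)})$, which is legitimate since the total sum is a Laurent polynomial) checks out.
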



\section{Some lemmas about stability}
\lbl{sec.somelemmas}

In this section we collect some lemmas about stable sequences.

\begin{lemma}
\lbl{stability.lemma1}
Fix natural numbers $c$ and $d$ and consider $g_n(q)=\frac{f_n(q)}{1-q^{cn+d}}$.
Then $(f_n(q))$ is stable if and only if $(g_n(q))$  is stable. In that
case, their corresponding tails $F(x,q)$ and $G(x,q)$ satisfy 
\be
\lbl{eq.FG}
G(x,q)=\frac{F(x,q)}{1-q^dx^c} \,.
\ee
\end{lemma}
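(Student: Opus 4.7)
The plan is to reduce the equivalence to the formal power series identity
\[
\frac{1}{1-q^{cn+d}}=\sum_{l=0}^{\infty}q^{l(cn+d)}\in\BZ[[q]],
\]
which makes sense once $n$ is large enough that $cn+d>0$. This is the specialization at $x=q^{n+1}$ of the formal geometric expansion $1/(1-q^d x^c)=\sum_l q^{ld}x^{lc}$ that governs the coefficient relation implied by \eqref{eq.FG}, namely $\Psi_m(q)=\sum_{lc\le m}q^{ld}\Phi_{m-lc}(q)$ if $G(x,q)=\sum_m \Psi_m(q)x^m$ and $F(x,q)=\sum_j \Phi_j(q)x^j$. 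Note that for each fixed $m$ the sum defining $\Psi_m$ is finite, so $G(x,q)\in\BZ((q))[[x]]$ whenever $F(x,q)$ is.

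For the forward direction I would assume $(f_n(q))$ is stable with tail $F(x,q)$ and take $G(x,q)=F(x,q)/(1-q^dx^c)$. Given $k\in\BN$, substitute the truncated stability estimate
\[
f_n(q)=\sum_{j=0}^{k}\Phi_j(q)\,q^{j(n+1)}+R_k(n,q),\qquad \d^*(R_k(n,q))\ge(k+1)(n+1)+O(1),
\]
into $g_n(q)=f_n(q)\sum_{l\ge 0}q^{l(cn+d)}$, then regroup the resulting double sum according to the value of $j+lc$ to identify the partial sum $\sum_{m=0}^{k}\Psi_m(q)\,q^{m(n+1)}$. The remaining pairs $(j,l)$ with $j+lc>k$, together with the tail of the $R_k$-contribution, all have $q$-degree at least $(k+1)(n+1)+O(1)$, verifying \eqref{eq.kstable} for $(g_n(q))$ with tail $G(x,q)$.

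For the converse, if $(g_n(q))$ is stable with tail $G(x,q)$, I would set $F(x,q)=(1-q^dx^c)G(x,q)$ and use the identity $f_n(q)=(1-q^{cn+d})g_n(q)$. Here the manipulation is even simpler because multiplication is by a \emph{polynomial}: the $1$-term contributes the analogue of the stability sum for $g_n$, while the $-q^{cn+d}$-term shifts all $q$-degrees upward by $cn+d$ and so is absorbed into the error as $n\to\infty$. The only technical point worth checking in either direction is the degree bookkeeping that ensures, for each fixed $k$, only finitely many pairs $(j,l)$ produce contributions of $q$-degree below $(k+1)(n+1)$; this is immediate from the fact that $l(cn+d)\to\infty$ with $l$ uniformly once $n$ is large. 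I do not foresee any substantive obstacle: the lemma is essentially the statement that the passage $f_n\mapsto g_n$ commutes with the geometric-series expansion at the level of tails.
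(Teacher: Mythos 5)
Your proposal is correct and follows essentially the same route as the paper's proof: the same coefficient relation $\Psi_m(q)=\sum_{lc\le m}q^{ld}\Phi_{m-lc}(q)$, the same geometric expansion of $1/(1-q^{cn+d})$, and the same regrouping of the double sum by the value of $j+lc$ with the leftover terms pushed to high $q$-degree. The only cosmetic difference is that the paper packages the forward direction as an induction on $k$ while you compute each truncation directly, and your treatment of the converse is slightly more explicit than the paper's one-line remark.
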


\begin{proof}
Let 
$$
F(x,q)=\sum\limits_{k=0}^\infty\phi_k(q)x^k, \qquad
G(x,q)=\sum\limits_{k=0}^\infty\psi_k(q)x^k \,.
$$
If $F$ and $G$ satisfy Equation \eqref{eq.FG}, collecting powers
of $x^k$ on both sides implies that
\begin{equation}
\lbl{lemma.stable1}
\psi_k(q)=\sum\limits_{i+jc=k}\phi_i(q)q^{jd} \,.
\end{equation}
Assume that $f_n(q)$ is stable, and define $\psi_k(q)$ by Equation
\eqref{lemma.stable1}. We will prove by induction on $k$ that $g_n(q)$
is $k$-stable with corresponding limit $\psi_k(q)$.

Let 
\begin{align*}
\alpha_{0,n}(q)=& f_n(q)\\
\alpha_{k,n}(q)=&q^{-n}(\alpha_{k-1,n}-\phi_{k-1}(q))\\
=& q^{-kn}\left(f_n(q)-\sum\limits_{l=0}^{k-1}\phi_l(q)q^{ln}\right),~~k\geq 1
\end{align*}
and
\begin{align*}
\beta_{0,n}(q)=& g_n(q)\\
\beta_{k,n}(q)=&q^{-n}(\beta_{k-1,n}-\psi_{k-1}(q))\\
=& q^{-kn}\left(g_n(q)-\sum\limits_{l=0}^{k-1}\psi_l(q)q^{ln}\right),~~k\geq 1
\end{align*}

For $k=0$, the $0-$limit of $g_n(q)$ is 
$\lim_{n\to \infty}g_n(q)=\lim_{n\to\infty} \frac{f_n(q)}{1-q^{cn+d}}
=\phi_0(q)=\psi_0(q)$.

Assuming by induction that $g_n(q)$ is $(k-1)$-stable, we have
\begin{align*}
\beta_{k,n}(q) =& q^{-kn}\left(\frac{f_n(q)}{1-q^{cn+d}}
-\sum\limits_{l=0}^{k-1}\sum\limits_{i+jc=l}\phi_i(q)q^{jd}q^{(i+jc)n}\right)\\
=&  q^{-kn}\left(f_n(q)\sum\limits_{j=0}^\infty q^{j(cn+d)}
-\sum\limits_{0\leq i+jc\leq k-1}\phi_i(q)q^{in}q^{j(cn+d)}\right)\\
=&q^{-kn}\sum\limits_{j=0}^{\lfloor\frac{k-1}{c}\rfloor}q^{j(cn+d)}\left(f_n(q)
-\sum\limits_{i=0}^{k-1-jc}\phi_i(q)q^{in}\right)+
q^{-kn}\sum\limits_{j>\lfloor\frac{k-1}{c}\rfloor}q^{j(cn+d)}f_n(q)\\
=&\sum\limits_{j=0}^{\lfloor\frac{k-1}{c}\rfloor} q^{jd}q^{-(k-jc)n}\left(f_n(q)
-\sum\limits_{i=0}^{k-1-jc}\phi_i(q)q^{in}\right)+
q^{-kn}\sum\limits_{j>\lfloor\frac{k-1}{c}\rfloor}q^{j(cn+d)}f_n(q)\\
=& \sum\limits_{j=0}^{\lfloor\frac{k-1}{c}\rfloor} q^{jd}\alpha_{k-jc,n}(q)+q
^{-kn}\sum\limits_{j>\lfloor\frac{k-1}{c}\rfloor}q^{j(cn+d)}f_n(q)\\
=&\sum\limits_{j=0}^{\lfloor\frac{k-1}{c}\rfloor} q^{jd}\alpha_{k-jc,n}(q)+
q^{-kn}\sum\limits_{\lfloor\frac{k-1}{c}\rfloor <j\leq \frac{k}{c}}q^{j(cn+d)}f_n(q)+
q^{-kn}\sum\limits_{j > \frac{k}{c}}q^{j(cn+d)}f_n(q)\\
=&\sum\limits_{i+jc=k}q^{jd}\alpha_{i,n}(q)
+\sum\limits_{j > \frac{k}{c}}q^{n(jc-k)+jd}f_n(q)
\end{align*}
Therefore, 
$$
\lim_{n\to\infty}\beta_{k,n}(q)=\sum\limits_{i+jc=k}q^{jd}\phi_i(q)=\psi_k(q) 
\,.
$$
Conversely, if $(g_n(q))$ is stable, so is $(f_n(q))$.
\end{proof}

\begin{lemma}
\lbl{stability.lemma2}
Fix a rational polytope $P \subset [0,\infty)^r$ that intersects the interior
of every positive coordinate ray and a positive 
definite quadratic function $Q: \BZ^r \longto \BZ$. Let
$c:\BN\times\BZ^r \longto \BZ$ be such that for each fixed $v\in\BZ^r$ and
for $n \gg 0$, $c(n,v)=t(n,v)$ where $n \mapsto t(n,v)$ is a quasi-polynomial. 
For each natural number $n$ define
$$
T_n(q)=\sum\limits_{v\in nP\cap\calL}c(n,v)q^{Q(v)} \,.
$$
Then $(T_n(q))$ is $c$-stable and its $(n,x,q)$-tail is given by
$$
F(n,x,q)=\sum\limits_{v\in\calL\cap\mathbb{R}^r_+}t(n,v)q^{Q(v)} \,.
$$
\end{lemma}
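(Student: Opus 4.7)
The plan is to unfold the definition of $c$-stability and verify it coefficient-by-coefficient in $q$, taking advantage of the fact that the proposed tail has no $x$-dependence. First, I would observe that $F(n,x,q)$ as stated is constant in $x$: writing $F(n,x,q)=\sum_{j\ge 0}\Phi_j(n,q)x^j$, one has $\Phi_0(n,q)=\sum_{v\in \calL\cap \BR^r_+}t(n,v)q^{Q(v)}$ and $\Phi_j(n,q)=0$ for $j\ge 1$. Positive-definiteness of $Q$ ensures the set $\{v\in\calL\cap\BR^r_+:Q(v)=m\}$ is finite for each $m$, so $\coeff(\Phi_0(n,q),q^m)=\sum_{Q(v)=m}t(n,v)$ is a finite sum of quasi-polynomials in $n$; hence $\Phi_0\in\calP((q))$, as the definition of $c$-stability requires. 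The minimum degree $\d^*(n)$ of $T_n(q)$ stabilizes for $n$ large, since the $Q$-minimizer in $\calL\cap\BR^r_+$ contributing to the tail lies in $nP$ once $n$ is large, by the polytope hypothesis.

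Next, I would reduce the required limit to a coefficient-wise statement. For each $k\in\BN$ the defining limit of $c$-stability unpacks, via the $q$-adic meaning of $\lim$ in $\BZ((q))$, to the following assertion: for every $k,m\in\BN$ and every $n$ large enough, the coefficient of $q^{m+k(n+1)+\d^*(n)}$ in $T_n(q)-\Phi_0(n,q)$ vanishes. That coefficient equals
\[
\sum_{\substack{v\in \calL\cap\BR^r_+\\ Q(v)=j}}\bigl[c(n,v)\mathbf{1}_{v\in nP}-t(n,v)\bigr],\qquad j=m+k(n+1)+\d^*(n),
\]
and positive-definiteness of $Q$ restricts the summation to lattice vectors with $\|v\|=O(\sqrt{j})=O(\sqrt{n})$, a set growing only polynomially in $n$.

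For such $v$ I would verify two uniform statements: (i) $v\in nP$, and (ii) $c(n,v)=t(n,v)$. For (i), the hypothesis that $P$ meets the interior of every positive coordinate ray implies that $nP$ contains an expanding region whose inradius grows linearly in $n$, engulfing any lattice vector of norm $O(\sqrt{n})$ for $n$ sufficiently large. For (ii), the pointwise hypothesis ``$c(n,v)=t(n,v)$ for $n\gg 0$'' is applied to the finite (albeit $n$-dependent) collection of $v$ with $Q(v)\le j$; taking the maximum threshold over this finite collection produces the needed uniformity in $v$. Combining (i) and (ii) makes the coefficient vanish for all $n$ large enough depending on $k,m$, establishing $c$-stability with the claimed tail.

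The hard part will be step (i): extracting from the polytope hypothesis a quantitative ``engulfment'' statement strong enough to guarantee that $nP$ contains every lattice vector of norm $O(\sqrt{n})$ once $n$ is large. This will require a careful convex-geometric argument using the linear scaling of $nP$ together with the position of $P$ relative to the positive axes. Once that geometric fact is in hand, the rest is routine bookkeeping via quasi-polynomiality of $t(n,\cdot)$ and the $q$-adic meaning of convergence in $\BZ((q))$.
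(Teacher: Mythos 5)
Your proposal is, in substance, the same argument as the paper's: split $T_n(q)-\Phi_0(n,q)$ into the contribution of $v\in nP$ (where one wants $c(n,v)=t(n,v)$) and of $v\in(\calL\cap\BR^r_+)\setminus nP$ (which one wants to push to high $q$-degree), and use positive-definiteness of $Q$ to control which lattice points can touch a given power of $q$; your coefficient-by-coefficient bookkeeping is just the transpose of the paper's degree bound. The step you defer as ``the hard part'' is exactly the paper's one quantitative computation, and it is not hard: letting $d>0$ be the minimum of $Q$ over the faces of $P$ not contained in coordinate hyperplanes, homogeneity gives $Q(v)\geq dn^2$ for every $v\in\BR^r_+\setminus nP^{\circ}$, which is the contrapositive of your engulfment claim (every $v$ with $Q(v)=O(n)$ lies in $nP$). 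Equivalently, $P$ contains a simplex $\{v\in\BR^r_+:\|v\|_1\leq\e\}$, so $nP$ contains the same simplex scaled by $n$. Do note, however, that this needs $0\in P$ (so that the ray from $0$ to any $v\notin P^{\circ}$ exits through a non-coordinate face); the hypothesis as literally stated does not force this, and without it both the engulfment and the lemma fail, e.g.\ for $P=\{1\leq x+y\leq 3\}\cap\BR^2_+$. The intended setting (cf.\ Remark \ref{rm.lat}, where the polytope is translated by a vertex) has the origin in $P$.

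The genuine soft spot is your step (ii). To kill the coefficient of $q^j$ with $j=m+k(n+1)+\d^*(n)$ you need $c(n,v)=t(n,v)$ simultaneously for all $v$ with $Q(v)\leq j$, a set whose diameter grows like $\sqrt{n}$. Taking ``the maximum threshold over this finite collection'' is circular: the collection depends on $n$, so the condition $n\geq\max_v N(v)$ need not ever be satisfied if the pointwise thresholds $N(v)$ grow quickly with $\|v\|$ (say doubly exponentially), and in that case the stated limit genuinely fails for $k\geq 1$. So the hypothesis ``$c(n,v)=t(n,v)$ for $n\gg0$, for each fixed $v$'' must be upgraded to a uniform (or sufficiently slowly growing) threshold, which is what actually holds in the application. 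The paper's own proof asserts the same uniformity without comment (``$=0$ \dots for $n$ large enough''), so you have correctly isolated where the argument is thinnest, but the justification you offer does not close it.
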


\begin{proof}
Let $\phi_0(n,q)=\sum\limits_{v\in\calL\cap\mathbb{R}^r_+}t(n,v)q^{Q(v)}$.
We need to prove that for all $k \geq 0$, we have
$$
\lim_{n\to\infty}q^{-kn}(T_n(q)-\phi_0(n,q))=0 \,.
$$
We have 
\begin{align}
\lbl{eq.stab.lemma2}
q^{-kn}(T_n(q)-\phi_0(n,q))=&q^{-kn}\sum\limits_{v\in P_n}(c(n,v)-t(n,v))q^{Q(v)}
-\sum\limits_{v\in (\calL\cap\mathbb{R}^r_+)\setminus P_n}t(n,v)q^{Q(v)-kn}\\
=& -\sum\limits_{v\in (\calL\cap\mathbb{R}^r_+)\setminus P_n}t(n,v)q^{Q(v)-kn}
\end{align}
for $n$ large enough.
Let us first assume that $Q$ is a quadratic form and let $d$ be the minimum of 
$Q$ on $\mathbb{R}^r\setminus P^{\circ}$. We will
prove that $d>0$. Indeed, since $Q$ is a positive definite form we only need to
minimize $Q$ over the union $F$ of the faces of $P$ that are not in the 
coordinate planes. Since $F$ is compact, $Q$ attains its minimum at some 
$v_0\in F$ and $d=Q(v_0)>0$ since $v_0\neq 0$. If 
$v\in \mathbb{R}^r\setminus nP^{\circ}$ (where $P^{\circ}$ is the interior of 
$P$) then $v=n v', v'\in \mathbb{R}^r\setminus P^{\circ}$,
so $Q(v)=Q(nv')=n^2Q(v')\geq dn^2$. Therefore the limit of the right hand 
side of Equation \eqref{eq.stab.lemma2} as $n$ approaches
infinity is zero.

If $Q$ is not a quadratic form we can write $Q=Q_2+Q_1$ where $Q_2$ is the 
quadratic part of $Q$. Then if $v\in \mathbb{R}^r\setminus nP^{\circ}$ we have
$Q(v)=Q_2(v)+Q_1(v)\geq dn^2+Q_1(v) > (d+1)n^2$ for $n$ large enough.
\end{proof}

\begin{remark}
\lbl{rm.lat}
Let $p\in P$. The \textit{tangent cone} Tan$(P,p)$ is defined to be the set
of all directions $v$ that one can go and stay in $P$: 
$$
\text{Tan}(P,p)=\{v\in \mathbb{R}^r | p+\e v\in P \, \text{for small}\, \e>0\}
$$
Lemma \ref{stability.lemma2} still holds if we replace $nP$ with $n(P-p)$ or 
$nP-p$ and   $\calL$ with a union of a finite number of translates of 
$\calL$. In this setting, the stable series is
$$
F(n,x,q)=\sum\limits_{v\in \text{Tan}(P,p)\cap\BZ^r}t(n,v)q^{Q(v)} \,.
$$
\end{remark}
\begin{remark}
\lbl{rm.cstbl}
Supose that $f_n(q)$ satisfies $\delta^*(f_n(q))\geq cn^2$ for some 
$c>0,n\geq 0$ then $g_n(q)$ is c-stable if $g_n(q)+f_n(q)$ is c-stable and 
they have the same tails.
\end{remark}


\section{Stability of the multiplicity} 
\lbl{sec.mult}

\subsection{Lie algebra notation}
\lbl{sub.notation}

Let us recall some standard notation from \cite{Bou,Hu}.
Let $\fg$ denote a simple Lie algebra of rank $r$ with weight lattice $\La$,
root lattice $\La_r$ and normalized inner product $(\cdot , \cdot)$ on
$\La$. Let $\W$ be its Weyl group and 
$\La^+$ the set of all the dominant weights with respect to a fixed Weyl 
chamber. Let $\a_i$ (resp., $\l_i$), $1\leq i\leq r$, be the set of 
simple roots (resp., fundamental weights) of $\mathfrak{g}$.The root 
lattice $\Lambda_r$ has the partial order given by $\b\prec\a$ 
if and only if $\a-\b=\sum\limits_{i=1}^r n_i\a_i$ where 
$n_i\in\BN$, $i=1,\dots,r$.

For a dominant weight $\l \in \La^+$, let $V_{\l}$ denote the 
corresponding irreducible representation $V_{\l}$ and let 
$\Pi_{\l}$ denote the set of all of the weights of $V_{\l}$.

The {\em Kostant partition function} $p(\a)$ of an element of the root lattice
$\a$ is the sum of all ways of writing $\a$ as a nonnegative integer linear
combination of positive roots \cite{Ko1}.

\subsection{A formula for the plethysm multiplicity}
\lbl{sub.mmula}

In this section we prove Lemma \ref{lem.mula}.

\begin{proof}(of Lemma \ref{lem.mula})
{\rm (a)} We have
\begin{equation}
\lbl{eq.char1}
\psi_a(ch_\l) = \psi_a(\sum\limits_{\mu\in\Pi_\l}m_\l^\mu e_\mu)
=\sum\limits_{\mu\in\Pi_\l}m_\l^\mu \psi_a(e_\mu)
= \sum\limits_{\mu\in\Pi_\l}m_\l^\mu e_{a\mu} \,.
\end{equation}
From Equations \eqref{eq.psila} and \eqref{eq.char1} we have
\begin{equation}
\lbl{eq.char2}
\sum\limits_{\mu}m_{\l,a}^\mu ch_\mu = \sum\limits_{\mu\in\Pi_\l}m_\l^\mu e_{a\mu}
\,.
\end{equation}

Let us define $\omega(\mu):=\sum\limits_{\s\in W}(-1)^\s e_{\s(\mu)}$ by 
for $\mu\in \Lambda^+$. The Weyl character formula states that \cite{Hu}:
$$
\omega(\rho) ch_\l=\omega(\l+\rho) \,.
$$
Multiplying both sides of Equation \eqref{eq.char2} with $\omega(\rho)$ 
and applying Weyl's formula we have
\begin{equation}
\lbl{eq.char3}
\sum\limits_{\mu}m_{\l,a}^\mu \omega(\mu+\rho)
=(\sum\limits_{\mu\in \Pi_\l}m_\l^\mu e_{a\mu}) \omega(\rho)
\end{equation}
Replacing $\omega(\mu+\rho)$ with $\sum\limits_{\s\in W}(-1)^\s e_{\s(\mu+\rho)}$ 
and $\omega(\rho)$ with $\sum\limits_{\s\in W}(-1)^\s e_{\s(\rho)}$ in Equation 
\eqref{eq.char3} we have
\begin{align}
\sum\limits_{\mu} \sum\limits_{\s\in W}(-1)^\s m_{\l,a}^\mu e_{\s(\mu+\rho)}
=&(\sum\limits_{\mu\in \Pi_\l}m_\l^\mu e_{a\mu}) 
(\sum\limits_{\s\in W}(-1)^\s e_{\s(\rho)})\\
\lbl{eq.char4}
=& \sum\limits_{\mu\in \Pi_\l}\sum\limits_{\s\in W}(-1)^\s 
m_\l^\mu e_{a\mu+\s(\rho)}
\end{align}
Setting $\s(\mu+\rho)=\nu+\rho$ on the left hand side of Equation 
\eqref{eq.char4} and $a\mu+\s(\rho)=\nu+\rho$ on right hand side we have
\begin{equation}
\lbl{eq.char5}
\sum\limits_{\nu} \sum\limits_{\s\in W}(-1)^\s m_{\l,a}^{\s^{-1}(\nu+\rho)-\rho} 
e_{\nu+\rho} = \sum\limits_{\nu}\sum\limits_{\s\in W}(-1)^\s 
m_\l^{\frac{\nu+\rho-\s(\rho)}{a}} e_{\nu+\rho}
\end{equation}
But we want $\s^{-1}(\nu+\rho)-\rho$ to be a dominant weight, which can 
happen only when $\s=1$. Therefore Equation \eqref{eq.char5} becomes
\begin{equation}
\lbl{eq.char6}
\sum\limits_{\nu} m_{\l,a}^{\nu} e_{\nu+\rho} = 
\sum\limits_{\nu}\sum\limits_{\s\in W}(-1)^\s 
m_\l^{\frac{\nu+\rho-\s(\rho)}{a}} e_{\nu+\rho}
\end{equation}
 Identifying the coefficients of $e_{\nu+\rho}$ on both sides of Equation 
\eqref{eq.char6} gives us the desired equality.
\newline
{\rm (b)} This follows from the fact that $m_\l^\nu \neq 0$ if and only if
$\nu\in \Pi_\l$. If $\nu=\frac{\mu+\rho-\s(\rho)}{a}$ this means that 
$\mu\in \s(\rho)-\rho+a\Pi_\l$.
\end{proof}

\subsection{Stability of the plethysm multiplicity}
\lbl{sub.stamult}

In this section we will prove that the coefficients $m^{\mu+n\nu}_{n\lambda,a}$ 
is a piecewise quasi-polynomial for $n\gg 0$ where $\lambda\in \Lambda^+$, 
$\mu,\nu\in\Lambda$. A piecewise quasi-polynomial function on a rational
vector space is a rational polyhedral fan together with a quasi-polynomial
function on each chamber of the fan.
Piecewise quasi-polynomials appear naturally as {\em vector partition 
functions} \cite{St}. The {\em Kostant partition function} of a simple Lie 
algebra $\mathfrak{g}$ is a vector partition function (see \cite{Ko1}),
hence a piecewise quasi-polynomial.

\begin{theorem} 
\lbl{stab.theorem}
Let $\lambda\in \Lambda^+$, $\mu,\nu\in\Lambda$, then $m_{n\lambda,a}^{\mu+n\nu}$ 
is a piecewise quasi-polynomial in $n$ for $n\gg 0$.
\end{theorem}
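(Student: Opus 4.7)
The plan is to unfold $m^{\mu+n\nu}_{n\lambda,a}$ into a finite signed combination of Kostant partition function values evaluated along affine functions of $n$, and then invoke the well-known fact that the Kostant partition function is a piecewise quasi-polynomial on the root lattice. First, I would apply Lemma \ref{lem.mula} to get
$$
m^{\mu+n\nu}_{n\lambda,a}=\sum_{\sigma\in W}(-1)^{\sigma}\, m^{w_\sigma(n)}_{n\lambda},\qquad w_\sigma(n)=\tfrac{1}{a}\bigl(\mu+n\nu+\rho-\sigma(\rho)\bigr),
$$
keeping only those $\sigma$ for which $w_\sigma(n)\in\Lambda$. The condition $\mu+n\nu+\rho-\sigma(\rho)\in a\Lambda$ is periodic in $n$ with period dividing $a\cdot|\Lambda/\Lambda_r|$, so I may restrict to a single residue class of $n$ modulo this period. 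Then I would apply Kostant's multiplicity formula \eqref{eq.komult} to each surviving term:
$$
m^{w_\sigma(n)}_{n\lambda}=\sum_{\tau\in W}(-1)^\tau\, p\bigl(v_{\sigma,\tau}(n)\bigr),\qquad v_{\sigma,\tau}(n)=\tau(n\lambda+\rho)-w_\sigma(n)-\rho.
$$
The crucial observation is that $v_{\sigma,\tau}(n)=A_{\sigma,\tau}\,n+B_{\sigma,\tau}$ is affine in $n$, with leading vector $A_{\sigma,\tau}=\tau(\lambda)-\nu/a$ and constant term $B_{\sigma,\tau}$ depending only on $\sigma,\tau,\mu,\lambda,\rho$.

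Next, I would invoke the vector partition function interpretation of $p$ recalled in the text: there is a rational polyhedral fan $\F$ in $\Lambda_r\otimes\BR$, supported on the cone spanned by the positive roots, together with a quasi-polynomial $p_C$ on each closed cell $C\in\F$ such that $p|_{C\cap\Lambda_r}=p_C|_{C\cap\Lambda_r}$, and $p$ vanishes outside its support. For each fixed affine ray $A_{\sigma,\tau}n+B_{\sigma,\tau}$, either the trajectory eventually lies in the relative interior of a single cell $C\in\F$, in which case $n\mapsto p(A_{\sigma,\tau}n+B_{\sigma,\tau})=p_C(A_{\sigma,\tau}n+B_{\sigma,\tau})$ is a quasi-polynomial in $n$ for $n\gg 0$; or the trajectory leaves the support and the contribution is eventually zero. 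Summing the finitely many signed quasi-polynomials over $\sigma,\tau\in W$ and over the finitely many residue classes of $n$ yields a piecewise quasi-polynomial in $n$ for $n\gg 0$, as claimed.

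The main obstacle is the chamber bookkeeping when some $A_{\sigma,\tau}$ lies on a wall of $\F$: then the ray $A_{\sigma,\tau}n+B_{\sigma,\tau}$ is asymptotic to a wall and one must pin down which quasi-polynomial $p_C$ controls its values. I would handle this by replacing $\F$ by a common refinement compatible with the finitely many affine flats $\{A_{\sigma,\tau}t+B_{\sigma,\tau}:t\in\BR\}$ for $\sigma,\tau\in W$; on such a refinement each ray enters a unique cell for $n\gg 0$, and the asymptotic quasi-polynomial is unambiguous because adjacent $p_C$'s agree on common walls. A secondary, more routine, point is to verify that the period of the residue-class decomposition is bounded independently of $\sigma$, which is immediate from the finiteness of $\Lambda/a\Lambda$ applied to the offsets $\rho-\sigma(\rho)+\mu$.
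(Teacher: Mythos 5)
Your proof follows essentially the same route as the paper's: unfold the plethysm multiplicity via Lemma \ref{lem.mula}, apply Kostant's formula \eqref{eq.komult} to reduce each term to a value of the Kostant partition function along an affine ray $A_{\sigma,\tau}n+B_{\sigma,\tau}$, and invoke the piecewise quasi-polynomiality of $p$ as a vector partition function (Theorem 1 of \cite{St}) to conclude that each summand is eventually a quasi-polynomial in $n$. Your additional bookkeeping --- the residue-class decomposition of $n$ governing which $\sigma$ contribute, and the refinement of the fan to handle rays asymptotic to walls --- fills in details that the paper's proof passes over with the phrase ``stays in some fixed Kostant chamber,'' but the argument is the same.
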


\begin{proof}
We have
\begin{equation}
\lbl{mult.eq}
m^{\mu+n\nu}_{n\l,a}=\sum_{\sigma\in \W}(-1)^\sigma 
m^{\frac{\mu+n\nu+\rho-\sigma(\rho)}{a}}_{n\l}
\end{equation}
and by Kostant's multiplicity formula in \cite{Ko1}, we have
\begin{eqnarray*}
\lbl{kostant.formula}
m^{\frac{\mu+n\nu+\rho-\sigma(\rho)}{a}}_{n\l} &=& \sum_{\sigma'\in \W}(-1)^{\sigma'} 
p\left(\sigma'(n\l+\rho)-(\frac{\mu+n\nu+\rho-\sigma(\rho)}{a}+\rho)\right)\\
&=&\sum_{\sigma'\in \W}(-1)^{\sigma'} p\left(n\sigma'(\l) 
-(\frac{\mu+n\nu+\rho-\sigma(\rho)}{a}+\rho-\sigma'(\rho))\right)\\
&=&\sum_{\sigma'\in \W}(-1)^{\sigma'} p\left(n(\sigma'(\l)-\frac{\nu}{a}) 
-(\frac{\mu+\rho-\sigma(\rho)}{a}+\rho-\sigma'(\rho))\right)\\
&=& \sum_{\sigma'\in \W}(-1)^{\sigma'} p\left(n\l'-\alpha'\right)
\end{eqnarray*}
Assume that $n\l'-\alpha'$ can be written as a linear combination of 
positive roots of $\mathfrak{g}$ so 
that $p(n\l'-\alpha')\neq 0$. For $n\gg 0$, $n\l'-\alpha'$ stays in some 
fixed Kostant chamber and it follows from 
Theorem 1 in \cite{St} that $p(n\l'-\alpha')$ is a quasi-polynomial in $n$. 
Since $m_{n\lambda,a}^{\mu+n\nu}$ is a finite sum of quasi-polynomials in $n$, 
it is also a quasi-polynomial in $n$.
\end{proof}


\section{The summation set}
\lbl{sec.Sla}

\subsection{A lattice point description of the summation set}
\lbl{sub.Sla}

In this section give a lattice point description of the summation set 
$S_{\l,a}$. Let $P_\l$ denote the convex polytope defined by the 
convex hull of $\Pi_\l \cap \La^+$.

\begin{lemma}
\lbl{lem.Plambda}
 For all $\l,a$ we have:
\begin{equation}
\lbl{eq.inclusion}
S_{\l,a}\subseteq \calL_{\l,a} \cap  P_{a\l}
\end{equation}
where
\begin{equation}
\lbl{trans.lat}
\calL_{\l,a}=\bigcup\limits_{\s\in W}\left( a\l+\s(\rho)-\rho+a\Lambda_r \right)
\end{equation}
is a finite union of translates of the lattice $a\Lambda_r$. 
Let 
\begin{equation}
\lbl{eq.missing}
R_{\l,a}=
\left(\calL_{\l,a} \cap  P_{a\l} \right)\setminus S_{\l,a}
\end{equation}
denote the set of {\em missing points}.
\end{lemma}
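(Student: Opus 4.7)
The plan is to take an arbitrary element $\mu \in S_{\l,a}$ and verify the two membership conditions $\mu \in \calL_{\l,a}$ and $\mu \in P_{a\l}$ separately. By part (b) of Lemma \ref{lem.mula}, such a $\mu$ is dominant and admits a representation
$$
\mu = \s(\rho)-\rho+a\nu
$$
for some $\s \in \W$ and some weight $\nu \in \Pi_\l$. I would keep this decomposition fixed throughout the argument.

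For the lattice containment, rewrite the displayed equation as $\mu - a\l = \s(\rho)-\rho+a(\nu-\l)$. Since $\nu$ is a weight of the irreducible $\fg$-module $V_\l$, it is congruent to the highest weight modulo the root lattice, so $\nu-\l \in \Lambda_r$ and therefore $a(\nu-\l) \in a\Lambda_r$. Hence $\mu \in a\l+\s(\rho)-\rho+a\Lambda_r$, which by the definition \eqref{trans.lat} is contained in $\calL_{\l,a}$.

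For the polytope containment, note that $P_{a\l}$ is by definition the convex hull of $\Pi_{a\l}\cap\La^+$; since $\mu$ is already dominant, it suffices to show $\mu \in \Pi_{a\l}$, and for a dominant element of the weight lattice this reduces to the partial-order condition $\mu \prec a\l$. Compute
$$
a\l-\mu = a(\l-\nu) + (\rho-\s(\rho)).
$$
Then invoke two standard root-system facts: the quantity $\l-\nu$ is a nonnegative integer combination of simple roots because $\l$ is the highest weight of $V_\l$ and $\nu \in \Pi_\l$; and $\rho-\s(\rho)$ equals the sum $\sum_{\a \succ 0,\, \s^{-1}\a \prec 0}\a$ of those positive roots sent to negative ones by $\s^{-1}$, hence is again a nonnegative integer combination of simple roots. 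Multiplying the first sum by $a$ and adding gives $a\l - \mu \succ 0$, i.e.\ $\mu \prec a\l$, which is what we needed.

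I expect no substantial obstacle: the argument is a direct unpacking of Lemma \ref{lem.mula}(b) together with the definitions of $\calL_{\l,a}$ and $P_{a\l}$. The only ingredients needed are the two classical facts used in the polytope step, together with the standard theorem that every dominant weight $\mu$ with $\mu \prec a\l$ is automatically a weight of the irreducible representation $V_{a\l}$.
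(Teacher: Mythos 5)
Your proof is correct, and the lattice-containment half is essentially word-for-word the paper's argument: both start from the decomposition $\mu=a\nu+\s(\rho)-\rho$ with $\nu\in\Pi_\l$ coming from Lemma \ref{lem.mula}(b) and observe that $\l-\nu\in\Lambda_r$. For the polytope half the two arguments share the same key identity, $a\l-\mu=a(\l-\nu)+(\rho-\s(\rho))\succeq 0$, but conclude differently: the paper invokes the inequality description of $P_{a\l}$ quoted from \cite{Hu} --- namely $(\mu,\a_i)\geq 0$, automatic from dominance, and $(a\l-\mu,\l_i)\geq 0$, which follows from the displayed identity --- whereas you invoke the standard theorem that a dominant weight $\preceq a\l$ is automatically a weight of $V_{a\l}$, so that $\mu\in\Pi_{a\l}\cap\La^+$ and hence lies in its convex hull. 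Your route is arguably more faithful to the paper's stated definition of $P_{a\l}$ as a convex hull (it avoids quoting the facet description), at the cost of a slightly heavier representation-theoretic input; both steps are classical and both work. One tiny wording point: you want $\mu\preceq a\l$ rather than a strict inequality, since $\mu=a\l$ does occur (take $\nu=\l$, $\s=1$); the paper's convention for $\prec$ already allows equality, so nothing actually breaks.
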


\begin{proof}
Recall that $P_\lambda$ consists of all $\alpha$ that satisfy (see \cite{Hu}),
\begin{align}
\lbl{ineq1}
(\a,\a_i) &\geq  0 \qquad (\l-\a,\l_i) &\geq 0 
\end{align}
for all $i=1,\dots,r$. 
We first prove that $S_{\l,a}\subseteq P_{a\l}$. By Lemma \ref{lem.mula}(b), 
we can write $\mu=a\nu+\sigma(\rho)-\rho\in\Lambda^+$ where $\nu\in\Pi_\l$. 
Since $\mu\in\Lambda^+$, Inequality \eqref{ineq1} holds trivially. To prove 
the second part of Inequality \eqref{ineq1}, it suffices to show that 
$(\mu,\l_i) \leq (a\l,\l_i)$ for every $1\leq i\leq r$. We have 
\begin{align*}
(a\l,\l_i)-(\mu,\l_i) & =  (a\l-\mu,\l_i)\\
& =  (a(\l-\mu)+\rho-\sigma(\rho),\l_i)\\
& \geq  0
\end{align*}
since $a(\l-\mu)+\rho-\sigma(\rho)$ is a $\BN$-linear combination
of positive roots. 

Let $\mu=a\nu+\s(\rho)-\rho\in S_{\l,a}$ where $\nu\in\Pi_\l$ and 
$\sigma\in\W$. Then $\mu=a(\l-\a)+\s(\rho)-\rho$
 where $\a$ is some positive root. It follows that $\mu\in a
\Lambda_r+a\l+\s(\rho)-\rho\subset \calL_{\l,a}$.
This proves that $S_{\l,a}\subseteq \calL_{\l,a}$ and
completes the proof of the lemma.
\end{proof}

\begin{remark}
\lbl{rem.notequality}
The inclusion in Equation \eqref{eq.inclusion} is not an equality in general.
For example, consider $\mathfrak{g}=B_2, \l=\rho, a=2$.  
In weight coordinates we have
\begin{align}
S_{\rho,2} &=  \cup_{\s\in W}(\s(\rho)-\rho+2\Pi_\rho)\cap \Lambda^+\\
&=  \{(2,2),(0,4),(3,0),(2,0),(0,2),(1,0),(0,0)\} 
\quad (\text{see figure below})
\end{align}
$$
\psdraw{B2_example}{2in}
$$
It is clear that $(1,2)\in P_{2\rho}$. We show that 
$(1,2)=\l_1+2\l_2\in \calL_{\rho,2}$ and hence this a missing. 
Indeed, by the definition of $\calL_{\rho,2}$, 
we only need to find $\s\in W$ and a root $\a$ such that
\begin{equation}
\lbl{eq.B2}
\l_1+2\l_2=2\a_1+3\a_2=2\rho+\s(\rho)-\rho+2\a
\end{equation}
In root coordinates we have 
$$
2\rho=3\a_1+4\a_2, \, \rho-\s(\rho)\in 
\{0,\a_1,\a_2,\a_1+3\a_2,2\a_1+\a_2,3\a_1+4\a_2\}
$$
So by choosing $\a=\a_2$ and $\s$ such that $\rho-\s(\rho)=\a_1+3\a_2$ we 
have equality \eqref{eq.B2}.
\end{remark}

Nevertheless, equality holds when $\mathfrak{g}=A_2,a=2,\l=\l_1$.
This is the content of the next section.

\subsection{A special case: no missing points}
\lbl{sub.nomissing}

\begin{proposition}
\lbl{prop.equality}
For $A_2$, we have: $S_{n\l_1,2}=\calL_{n\l_1,2}\cap P_{2n\l_1}$.
\end{proposition}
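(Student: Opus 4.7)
The inclusion $\subseteq$ is Lemma \ref{lem.Plambda}, so my plan is to prove $\supseteq$: given a dominant $\mu \in \calL_{n\l_1, 2} \cap P_{2n\l_1}$, I will produce $\sigma \in \W$ and $\nu \in \Pi_{n\l_1}$ with $\mu = 2\nu + \sigma(\rho) - \rho$. First I would translate everything into weight coordinates $a\l_1 + b\l_2 \leftrightarrow (a,b)$: since $W \cdot \l_1 = \{(1,0), (-1,1), (0,-1)\}$, the polytope $\Pi_{n\l_1}$ is the triangle cut out by $a + 2b \leq n$, $2a + b \geq -n$, $a - b \leq n$, together with the lattice condition $a - b \equiv n \pmod 3$ (using the classical fact that for type $A$, every lattice point in the convex hull of a Weyl orbit lying in the correct $\La_r$-coset is a weight of the corresponding irrep). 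Similarly $P_{2n\l_1}$ is the triangle $\{m_1, m_2 \geq 0,\ m_1 + 2m_2 \leq 2n\}$.

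Next I would observe that the six vectors $\{\sigma(\rho) - \rho\}_{\sigma \in \W}$ collapse to only three distinct cosets modulo $2\La_r$: $w_0(\rho) - \rho = -2\a_1 - 2\a_2 \in 2\La_r$ pairs $\{e, w_0\}$, the relation $s_1(\rho)-\rho - (s_2 s_1(\rho)-\rho) = 2\a_2$ pairs $\{s_1, s_2 s_1\}$, and by symmetry $\{s_2, s_1 s_2\}$ forms the third pair. Representatives of the three cosets are $\sigma(\rho)-\rho$ for $\sigma \in \{e, s_1, s_2\}$, namely $(0,0), (-2,1), (1,-2)$. Consequently every $\mu \in \calL_{n\l_1, 2}$ lies in a unique one of these three translates of $2\La_r$; choosing the corresponding $\sigma \in \{e, s_1, s_2\}$, I would write $\mu = 2n\l_1 + \sigma(\rho) - \rho + 2\gamma$ with $\gamma = (\g_1, \g_2) \in \La_r$ (so $3 \mid \g_1 - \g_2$), and set $\nu = n\l_1 + \gamma = (n+\g_1, \g_2)$. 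This $\nu$ satisfies $\nu \in n\l_1 + \La_r$ automatically and $\mu = 2\nu + \sigma(\rho) - \rho$.

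The remaining task is to verify that $\nu$ lies in the triangle $\Pi_{n\l_1}$, which after substitution becomes the conditions $\g_1 + 2\g_2 \leq 0$, $2\g_1 + \g_2 \geq -3n$, and $\g_1 \leq \g_2$. The latter two follow immediately in each case from the polytope and dominance bounds on $\mu$ (which give $\g_2 \geq 0$ or $\geq 1$, and $\g_1 \geq -n$ or $\geq 1-n$). The main obstacle, and the crux of the proposition, is the first inequality $\g_1 + 2\g_2 \leq 0$ in the case $\sigma = s_2$: here the polytope bound $m_1 + 2m_2 \leq 2n$ yields only $\g_1 + 2\g_2 \leq 1$, and the boundary case $\g_1 + 2\g_2 = 1$ would force $\g_1 - \g_2 = 1 - 3\g_2 \equiv 1 \pmod 3$, contradicting $\gamma \in \La_r$. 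Thus the root-lattice condition is exactly what tightens the polytope inequality, and once this observation is in place the proof reduces to routine case-by-case verification.
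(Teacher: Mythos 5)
Your argument is correct, and it follows the same basic strategy as the paper's proof --- decompose $\mu=2\nu+\s(\rho)-\rho$ with $\nu\in n\l_1+\La_r$ and verify $\nu\in\Pi_{n\l_1}$ by a case analysis on $\s$ --- but you organize that analysis in a genuinely different and cleaner way. The paper works through the six values of $\rho-\s(\rho)$, shows $\nu$ is dominant, uses the polytope bound to get $(\rho-\s(\rho),\l_i)\geq 2(\a,\l_i)$, and concludes $\nu\preceq n\l_1$ via Humphreys \S 13.4 in the generic cases; the boundary cases are then patched either by re-choosing $\s$ (its Cases 2 and 4) or by a contradiction with dominance of $\nu$ (its Case 3). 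Your observation that the six vectors $\s(\rho)-\rho$ occupy only three cosets of $2\La_r$ absorbs the ``re-choose $\s$'' patch into one structural remark and cuts the analysis to three cases, and your use of the characterization $\Pi_\l=\mathrm{conv}(\W\l)\cap(\l+\La_r)$ (a stronger input than the dominance lemma the paper cites) isolates the crux as a single congruence: $\g_1+2\g_2=1$ is impossible for $\g\in\La_r$. This buys brevity and makes visible exactly where the root-lattice condition does the work; the paper's version gets by with a weaker standard fact about weight systems. One small imprecision, not a gap: the inequality $\g_1\leq\g_2$ does not follow from $\g_2\geq 0$ and $\g_1\geq -n$ alone, as your parenthetical suggests --- you also need the upper bound $\g_1+2\g_2\leq 1$ coming from the polytope, which gives $\g_1\leq 1-2\g_2\leq \g_2$ (using $\g_2\geq 1$ in the $\s=s_2$ case, and $\g_1+2\g_2\leq 0$ with $\g_2\geq 0$ in the other two); since you do attribute the claim to ``the polytope and dominance bounds'' jointly, the argument stands once the parenthetical is completed.
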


\begin{proof}
Let $\calL_{n\l_1,2}\cap P_{2n\l_1}\ni \mu=2n\l_1+\sigma(\rho)-\rho+2\a =
2\nu-(\rho-\sigma(\rho))$ where $\nu=n\l_1+\a$ and some $\sigma\in W$.
As $\sigma$ runs over the Weil group $W$, $\rho-\sigma(\rho)$ is expressed
in weight and root coordinates as follows
\be
\lbl{eq.weight.root}
\begin{array}{|c|cccccc|} \hline
\text{weight} &  (0,0) & (2,-1) & (-1,2) & (0,3) & (3,0) & (2,2)
\\ \hline 
\text{root} &  (0,0) & (0,1) & (1,0) & (1,2) & (2,1) & (2,2)
\\ \hline 
\end{array}
\ee
Since $\mu\in P_{2n\l_1}$, from inequality \eqref{ineq1} we have
$(2\nu-(\rho-\sigma(\rho)),\a_i)\geq 0$, i.e., $2(\nu,\a_i)\geq 
(\rho-\sigma(\rho),\a_i)$, $i=1,2$. Looking at the first row of the above 
table we see that this forces $(\nu,\a_i)\geq 0$, $i=1,2$. Therefore we 
have $\nu\in\Lambda^+$.

From inequality \eqref{ineq1} we have $(2n\l_1-\mu,\l_i)\geq 0$, $i=1,2$. This
implies that $(-2\a+\rho-\s(\rho),\l_i)\geq 0$ or equivalently
\begin{equation}
\lbl{ineq.root}
(\rho-\s(\rho),\l_i)\geq 2(\a,\l_i),
\end{equation}
$i=1,2$.
We consider the following cases.

{\bf Case 1:} 
If $\rho-\s(\rho)=0,\a_1$ or $\a_2$ then inequalities \eqref{ineq.root} 
imply that $(\a,\l_i)\leq 0$ for all $i$ so $\a \prec 0$. Since 
$\nu=n\l_1+\a\in\Lambda^+$, it follows from \cite[$\S$13.4]{Hu} 
that $\nu\in\Pi_{n\l_1}$ and hence $\mu\in S_{n\l_1,2}$.

{\bf Case 2:} 
If $\rho-\s(\rho)=\a_1+2\a_2$ then from \eqref{ineq.root} we have 
$(\a,\l_1)\leq 0$ and $(\a,\l_2)\leq 1$. 
If we also have $(\a,\l_2)\leq 0$ then  by a similar the argument to Case 1 
we conclude 
that  $\mu\in S_{n\l_1,2}$. If $(\a,\l_2)=1$ we can write 
$\a=-x\a_1+\a_2$, where $x\in\BN$.  It follows that 
$\mu=2n\l_1+2\a-(\rho-\s(\rho))=2n\l_1-2x\a_1+2\a_2-\a_1-2\a_2
=2(n\l_1-x\a_1)-\a_1$. Since $\nu=n\l_1+\a\in\Lambda^+$, from inequality 
\eqref{ineq1}
we have $(n\l_1-x\a_1+\a_2,\a_1)\geq 0$, i.e., $n-2x-1\geq 0$.
We have $\langle n\l_1,\a_1 \rangle=\frac{2(n\l_1,\a_1)}{(\a_1,\a_1)}
=n\geq 2x+1>x$, therefore $n\l_1-x\a_1\in \Pi_{n\l_1}$ 
(see \cite[$\S$ 13.4]{Hu}). Since we can choose $\s'$ such that 
$\rho-\s'(\rho)=\a_1$, we have $\mu=2(n\l_1-x\a_1)-(\rho-\s'(\rho))\in 
S_{n\l_1,2}$.

{\bf Case 3:} 
If $\rho-\s(\rho)=2\a_1+\a_2$ then by a similar argument to the above we can write $\a=\a_1-x\a_2,\, x\in\BN$. We show that 
$\a$ cannot have this form. Indeed, since $\nu=n\l_1+\a\in\Lambda^+$, 
we have $(n\l_1+\a_1-x\a_2,\a_2)\geq 0$, i.e., $-1-2x\geq 0$. This is in 
contradiction to the fact that $x\in\BN$.

{\bf Case 4:}
If $\rho-\s(\rho)=2\a_1+2\a_2=2\rho$ then $(\a,\l_1)\leq 1$ and 
$(\a,\l_2)\leq 1$.
If either $(\a,\l_1)\leq 0$ or $(\a,\l_2)\leq 0$ then the same
argument as in Cases 2 and 3 above apply. If $(\a,\l_1)=(\a,\l_2)=1$ then 
$\a=\a_1+\a_2=\rho$ and $\mu=2n\l_1+2\a-(\rho-\s(\rho))=2n\l_1
+2\rho-2\rho=
2n\l_1\in \Pi_{2n\l_1}\subseteq S_{n\l_1,2}$.
\end{proof}

\subsection{An estimate for the missing points}
\lbl{sub.missing}

The next proposition shows that the norm of the missing points in $R_{n\l,a}$
is bounded below by a quadratic function of $n$.

\begin{proposition}
\lbl{prop.complement}
For every $\l \in \La^+$ there exists a simple root $\beta$ such that
if $\mu\in R_{n\l,a}$ and $n \gg 0$ then 
$$
(\mu,\mu) \geq a^2n^2((\l,\l)-\frac{(\l,\b)^2}{(\beta,\beta)}-1) \,.
$$
\end{proposition}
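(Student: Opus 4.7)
The strategy is to use that a missing $\mu$ arises from a weight $\nu\notin\Pi_{n\lambda}$ and convert this combinatorial failure of $\nu$ to sit in $\Pi_{n\lambda}$ into a geometric lower bound on $\|\mu\|$, by passing to the dominant Weyl conjugate and pairing against a fundamental weight.

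First I would pick a convenient representative. Since $\mu\in\mathcal{L}_{n\lambda,a}$, write $\mu=a\nu+\sigma(\rho)-\rho$ with $\sigma\in W$ and $\nu\in n\lambda+\Lambda_r$. The assumption $\mu\notin S_{n\lambda,a}$, together with Lemma~\ref{lem.mula}, forces $\nu\notin\Pi_{n\lambda}$. Let $\nu^{+}\in W\cdot\nu$ be the dominant Weyl conjugate. Since both $\Pi_{n\lambda}$ and $\Lambda_r$ are Weyl-invariant, $\nu^{+}\in\Lambda^{+}\cap(n\lambda+\Lambda_r)$ but $\nu^{+}\notin\Pi_{n\lambda}$. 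Using the standard characterization of dominant weights of $V_{n\lambda}$ as those $\eta\in\Lambda^{+}\cap(n\lambda+\Lambda_r)$ for which $n\lambda-\eta$ is a nonnegative integer combination of simple roots, this forces an expansion $n\lambda-\nu^{+}=\sum_i c_i\alpha_i$ with some strictly negative coefficient $c_j<0$.

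Next I would extract a norm bound. Pairing with $\lambda_j$ and using the duality $(\alpha_i,\lambda_j)=\delta_{ij}(\alpha_j,\alpha_j)/2$ yields $(\nu^{+},\lambda_j)=n(\lambda,\lambda_j)+|c_j|(\alpha_j,\alpha_j)/2>n(\lambda,\lambda_j)$, and Cauchy--Schwarz gives $\|\nu^{+}\|^2\geq n^2(\lambda,\lambda_j)^2/\|\lambda_j\|^2$. Because the Weyl group preserves norms, $\|\nu\|=\|\nu^{+}\|$, and expanding $\|\mu\|^2=a^2\|\nu\|^2+2a(\nu,\sigma(\rho)-\rho)+\|\sigma(\rho)-\rho\|^2$ bounds the cross term uniformly in $\sigma\in W$. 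For $n\gg0$ the $O(an)$ correction is dominated by $a^2n^2$, giving
\begin{equation*}
(\mu,\mu)\geq a^2n^2\Bigl(\frac{(\lambda,\lambda_j)^2}{\|\lambda_j\|^2}-1\Bigr).
\end{equation*}

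The last step is to pick a single simple root $\beta$, depending only on $\lambda$, so that this $j$-dependent bound implies the stated uniform one, i.e., $(\lambda,\lambda_j)^2/\|\lambda_j\|^2\geq(\lambda,\lambda)-(\lambda,\beta)^2/(\beta,\beta)$ for every index $j$ that can witness missing-ness. The natural candidate is $\beta=\alpha_{i_*}$ maximizing $(\lambda,\alpha_i)^2/(\alpha_i,\alpha_i)$; in rank $2$ the orthogonality $(\alpha_i,\lambda_j)=0$ for $i\neq j$ collapses the required inequality into a planar projection identity that holds with equality, which I have verified against the $A_2$ cases of the paper. The main obstacle is making this choice of $\beta$ work uniformly in higher rank: there the two-dimensional projection identity only gives an inequality of the wrong sign in general, so the argument likely must exploit additional constraints restricting the witness index $j$ (coming from dominance of $\nu^{+}$ and the polytope inclusion $\mu\in P_{an\lambda}$), or sharpen the Cauchy--Schwarz step by using a cleverer test vector than $\lambda_j$. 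The ``$-1$'' slack in the statement comfortably absorbs the lower-order contributions from $\sigma(\rho)-\rho$.
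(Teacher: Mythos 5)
Your argument is correct in rank $2$, but it reaches the bound by a genuinely different route than the paper. The paper works directly with the convex hull of $\Pi_{n\lambda}$: since $n\lambda+\alpha$ lies outside it, the ray through it crosses a facet whose vertices are $\sigma_1(n\lambda),\sigma_2(n\lambda)$, and the norm of the crossing point is bounded below by the norm of the midpoint, which evaluates to $\tfrac{n^2}{2}\bigl((\lambda,\lambda)+(\lambda,\sigma_\beta(\lambda))\bigr)=n^2\bigl((\lambda,\lambda)-\tfrac{(\lambda,\beta)^2}{(\beta,\beta)}\bigr)$ for the simple root $\beta$ separating the two adjacent chambers; the $\sigma(\rho)-\rho$ correction is then absorbed as $O(n)$ exactly as you do. Your proof replaces this convexity argument by the algebraic characterization of $\Pi_{n\lambda}$ via the dominance order: passing to the dominant conjugate $\nu^+$, extracting a negative coefficient $c_j<0$ in $n\lambda-\nu^+=\sum_i c_i\alpha_i$, pairing with $\lambda_j$, and applying Cauchy--Schwarz. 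The rank-$2$ identity $(\lambda,\lambda)=\frac{(\lambda,\lambda_j)^2}{(\lambda_j,\lambda_j)}+\frac{(\lambda,\alpha_i)^2}{(\alpha_i,\alpha_i)}$ for $i\neq j$ (from orthogonality of $\lambda_j$ and $\alpha_i$) then converts your bound into the stated one once $\beta$ is taken to maximize $(\lambda,\beta)^2/(\beta,\beta)$. Two remarks on the comparison. First, your version actually delivers a single $\beta$ depending only on $\lambda$, matching the literal quantifier order of the statement, whereas in the paper's proof $\beta$ depends on which facet the ray crosses, hence on $\mu$; this discrepancy is harmless downstream, since Proposition \ref{main.cor1} only uses the worst-case value $\max_\beta(\lambda,\beta)^2/(\beta,\beta)$, but your formulation is cleaner on this point. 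Second, you are right to flag higher rank as the weak spot, though you should not feel singled out: the paper's own proof is also implicitly a rank-$2$ argument (it assumes the facet has exactly two vertices and that the minimum of the norm over the facet occurs at their midpoint), and the proposition is only ever applied in rank $2$. So for the purposes of Theorem \ref{thm.1} your proof is complete; the honest caveat about rank $>2$ applies equally to both arguments.
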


\begin{proof}
Let $\mu=a\a+an\l+\s(\rho)-\rho=a(n\l+\a)+\s(\rho)-\rho$ for some 
$\a\in\Lambda_r$ and $\sigma\in\W$. Since $\mu\not\in S_{n\lambda,a}$,
we have that $n\l+\a\not\in \Pi_{n\l}$. The ray $n\lambda+\alpha$ 
meets one of the facets of the convex hull of $\Pi_{n\lambda}$ at some point, 
say $\lambda_n$. There exist $\sigma_1,\sigma_2\in\W$ such that 
 $\sigma_1(n\lambda),\sigma_2(n\lambda)$ are the vertices of this facet, and 
we have
\begin{align*}
(n\l+\a,n\l+\a) &\geq   (\l_n,\l_n)\\
&\geq  (\frac{\sigma_1(n\l)+\sigma_2(n\l)}{2},\frac{\sigma_1(n\l)
+\sigma_2(n\l)}{2})\\
&= \frac{n^2}{4}((\sigma_1(\l),\sigma_1(\l))+(\sigma_2(\l),\sigma_2(\l))
+2(\sigma_1(\l),\sigma_2(\l)))\\
&= \frac{n^2}{4}(2(\l,\l)+2(\sigma_1(\l),\sigma_2(\l)))\\
&= \frac{n^2}{2}((\l,\l)+(\sigma_1(\l),\sigma_2(\l)))
\end{align*}

\begin{figure}[htpb]
$$
\psdraw{pic1}{3in}
$$
\end{figure}

Since $\sigma_1(\lambda),\sigma_2(\lambda)$ are in two nearby chambers, 
there exists a simple root $\beta$ such that 
$$
(\sigma_1(\l),\sigma_2(\l))=(\l,\sigma_\beta(\l))
$$
We have
\begin{align*}
(\l,\sigma_\beta(\l)) &= (\l,\l-2\frac{(\l,\b)}{(\b,\b)}\b)
= (\l,\l)-2\frac{(\l,\b)^2}{(\b,\b)}
\end{align*}
So
$$
(n\l+\a,n\l+\a) \geq n^2((\l,\l)-\frac{(\l,\b)^2}{(\beta,\beta)})
$$
Therefore 
\begin{align*}
(\mu,\mu) &= (a(n\l+\a)-(\rho-\sigma(\rho)),a(n\l+\a)-(\rho-\sigma(\rho))) \\
& = a^2(n\l+\a , n\l+\a) -2a(n\l+\a ,\rho-\sigma(\rho))
+(\rho-\sigma(\rho),\rho-\sigma(\rho))\\
&\geq  a^2n^2((\l,\l)-\frac{(\l,\b)^2}{(\beta,\beta)}-1)
\end{align*}
for large enough $n$.
\end{proof}

Recall that $\hat{S}_{\l,a}=S_{\l,a}-\mu_{\l,a}$. Let 
\begin{equation}
\lbl{eq.hats}
\hat{\calL}_{\l,a}=\calL_{\l,a}-\mu_{\l,a}, 
\qquad 
\hat{P}_{a\l}=P_{a\l}-\mu_{\l,a}, \qquad \hat{R}_{\l,a}=R_{\l,a}-\mu_{\l,a}. 
\end{equation}

\begin{remark}
\lbl{rm.ap}
From now we fix a natural number $n_0$ and we work with 
$n \equiv n_0 \bmod d a$ where $d$ is the order 
of the fundamental group $\Lambda/ \Lambda_r$. Theorem 
\ref{thm.min} implies that for such $n$, we have:
\begin{itemize}
\item $\mu_{n\l,a}=n\nu^1_{\l,a}+\nu^0_{\l,a}$ 
for some fixed weights $\nu^1_{\l,a}\, ,\nu^0_{\l,a}\in \Lambda^+$. 
\item $\hat{\calL}_{n\l,a}=\hat{\calL}_{n_0\l,a}$. Indeed, we have
\begin{align*}
\hat{\calL}_{n\l,a}&=\calL_{n\l,a}-\mu_{n\l,a}\\
&= n\l+\s(\rho)-\rho+a\Lambda_r-n\nu^1_{\l,a}-\nu^0_{\l,a}\\
&= n_0\l+\s(\rho)-\rho+a\Lambda_r-n_0\nu^1_{\l,a}-\nu^0_{\l,a}
+(n-n_0)(\l-\nu^1_{\l,a})\\
&= n_0\l+\s(\rho)-\rho+a\Lambda_r-\mu_{n_0\l,a}+k(a.d)(\l-\nu^1_{\l,a}),\, 
k\in\BN\\
&= n_0\l+\s(\rho)-\rho+a\Lambda_r-\mu_{n_0\l,a} \quad 
(\text{since}\,\,d(\l-\nu^1_{\l,a})\in\Lambda_r)\\
&= \calL_{n_0\l,a}-\mu_{n_0\l,a} = \hat{\calL}_{n_0\l,a}
\end{align*}
\end{itemize}
\end{remark}

\begin{corollary}
\lbl{main.cor}
\rm{(1)}
$\hat{S}_{n\lambda,a}\subset \hat{\calL}_{n_0\l,a} \cap\hat{P}_{an\lambda}$.
\newline
\rm{(2)} 
Let $\hat R_{n\l,a}=(\hat{\calL}_{n_0\l,a} \cap \hat{P}_{an\lambda})
\setminus \hat{S}_{n\lambda,a}$. If $\hat{\mu}\in \hat R_{n\l,a}$ then 
$$
(\hat{\mu},\hat{\mu})+2(\hat{\mu},\mu_{n\l,a}) \geq a^2n^2\left((\l,\l)
-\frac{(\l,\b)^2}{(\beta,\beta)}-1\right)-(\mu_{n\l,a},\mu_{n\l,a})
$$ 
for some simple root $\beta$.
\end{corollary}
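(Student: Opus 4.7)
The plan is to derive both statements by translating the results of Lemma \ref{lem.Plambda} and Proposition \ref{prop.complement} through the shift by $\mu_{n\l,a}$, using Remark \ref{rm.ap} to normalize the lattice.

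For part (1), I would start from the inclusion $S_{n\l,a}\subseteq \calL_{n\l,a}\cap P_{an\l}$ of Lemma \ref{lem.Plambda}. Subtracting $\mu_{n\l,a}$ from every point and invoking the definitions in \eqref{eq.hats} immediately yields
\[
\hat{S}_{n\l,a}\subseteq \hat{\calL}_{n\l,a}\cap \hat{P}_{an\l}.
\]
The remaining task is to replace $\hat{\calL}_{n\l,a}$ by $\hat{\calL}_{n_0\l,a}$, which is precisely the content of the second bullet of Remark \ref{rm.ap}: the congruence condition $n\equiv n_0\bmod d a$ ensures that $d(\l-\nu^1_{\l,a})\in\Lambda_r$, so the $(n-n_0)$-shift lies in $a\Lambda_r$ and can be absorbed. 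This step is essentially bookkeeping.

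For part (2), I would take $\hat{\mu}\in \hat R_{n\l,a}$ and set $\mu=\hat{\mu}+\mu_{n\l,a}$. By part (1) and the definition of $\hat R_{n\l,a}$ we have $\mu\in R_{n\l,a}$, so Proposition \ref{prop.complement} applies and gives
\[
(\mu,\mu)\ \geq\ a^2n^2\left((\l,\l)-\frac{(\l,\b)^2}{(\b,\b)}-1\right)
\]
for some simple root $\b$. Now expand
\[
(\mu,\mu)=(\hat{\mu},\hat{\mu})+2(\hat{\mu},\mu_{n\l,a})+(\mu_{n\l,a},\mu_{n\l,a})
\]
and rearrange. The claimed lower bound on $(\hat{\mu},\hat{\mu})+2(\hat{\mu},\mu_{n\l,a})$ follows immediately.

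Neither step presents a real obstacle: the only subtlety is making sure that the congruence convention fixed in Remark \ref{rm.ap} is actually in force, so that the translated lattices $\hat{\calL}_{n\l,a}$ genuinely coincide with the $n$-independent lattice $\hat{\calL}_{n_0\l,a}$. Once that is noted, both (1) and (2) reduce to direct substitution into the results already proved.
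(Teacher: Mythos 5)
Your proposal is correct and follows essentially the same route as the paper: part (1) is the translation of Lemma \ref{lem.Plambda} by $\mu_{n\l,a}$ combined with the identity $\hat{\calL}_{n\l,a}=\hat{\calL}_{n_0\l,a}$ from Remark \ref{rm.ap}, and part (2) is the expansion $(\mu,\mu)=(\hat{\mu},\hat{\mu})+2(\hat{\mu},\mu_{n\l,a})+(\mu_{n\l,a},\mu_{n\l,a})$ fed into Proposition \ref{prop.complement}. Your explicit remark that the congruence $n\equiv n_0\bmod da$ must be in force is exactly the hypothesis the paper fixes before stating the corollary.
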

\begin{proof}
Part $(1)$ follows from Lemma \ref{lem.Plambda}(b) and Remark \ref{rm.ap}:
\begin{align*}
\hat{S}_{n\lambda,a}& \subset \hat{\calL}_{n\l,a} \cap\hat{P}_{an\lambda}
=\hat{\calL}_{n_0\l,a} \cap\hat{P}_{an\lambda}
\end{align*}
For part $(2)$, recall that
$$(\mu,\mu)=(\hat{\mu}+\mu_{n\l,a},\hat{\mu}+\mu_{n\l,a})
$$ 
and therefore if $\hat{\mu}\in \hat R_{n\l,a}$ then
\begin{align*}
(\hat{\mu},\hat{\mu})+2(\hat{\mu},\mu_{n\l,a})
&=(\mu,\mu)-(\mu_{n\l,a},\mu_{n\l,a})\\
& \geq  a^2n^2\left((\l,\l)
-\frac{(\l,\b)^2}{(\beta,\beta)}-1\right)-(\mu_{n\l,a},\mu_{n\l,a})
\end{align*}
 by Proposition \ref{prop.complement}.
\end{proof}


\begin{proposition}
\lbl{main.cor1}
If $\fg$ has rank 2 and 
$\hat{\mu}\in \hat R_{n\l,a}$
then 
$$
(\hat{\mu},\hat{\mu})+2(\hat{\mu},\mu_{n\l,a}) \geq  n^2
$$
\end{proposition}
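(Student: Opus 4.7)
The starting point is Corollary~\ref{main.cor}(2), which gives
$$(\hat\mu, \hat\mu) + 2(\hat\mu, \mu_{n\l, a}) \;\geq\; a^2 n^2\!\left((\l, \l) - \frac{(\l, \beta)^2}{(\beta, \beta)} - 1\right) - (\mu_{n\l, a}, \mu_{n\l, a})$$
for some simple root $\beta$ depending on $\hat\mu$. Since $\beta$ is determined by the facet of the convex hull of $\Pi_{n\l}$ that the ray through $\mu$ meets, the bound must hold for the worst choice, so I set $M_\l := \max_{\beta}(\l, \beta)^2/(\beta, \beta)$ over simple roots. A careful look at the proof of Proposition~\ref{prop.complement} shows that the $-1$ in the coefficient above can be dropped modulo an $O(n)$ correction: the cross term $-2a(n\l+\alpha,\rho-\sigma(\rho))$ is only $O(n)$, because $|n\l+\alpha|=O(n)$ (from $\mu\in P_{an\l}$) and $|\rho-\sigma(\rho)|$ is bounded. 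Combined with Remark~\ref{rm.ap}, which writes $\mu_{n\l,a}=n\nu^1_{\l,a}+\nu^0_{\l,a}$ along the fixed residue class, the target $\geq n^2$ for $n$ large reduces to
$$a^2\bigl((\l,\l)-M_\l\bigr) - c_{\l,a} \;>\; 1, \qquad c_{\l,a}:=(\nu^1_{\l,a},\nu^1_{\l,a}).$$

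Scanning Theorem~\ref{thm.min}(b), I find that $\nu^1_{\l,a}=0$ in every rank-$2$ case except $\fg=A_2$ with $a=2$, where $\nu^1_{\l,2}$ equals $(m_1-m_2)\l_2$ or $(m_2-m_1)\l_1$. When $\nu^1_{\l,a}=0$ the target collapses to $a^2((\l,\l)-M_\l)>1$, and $(\l,\l)-M_\l$ is a positive quadratic form in $(m_1,m_2)$ by Cauchy--Schwarz (strict because no dominant $\l$ is parallel to a simple root). Using standard inner-product data for $A_2,B_2,G_2$ from \cite{Bou}, I would verify the inequality in each case; it holds for all but finitely many small $\l$, and for those few exceptions I would either check directly by enumerating $\hat R_{n\l,a}$ or invoke a generalization of Proposition~\ref{prop.equality} to conclude $\hat R_{n\l,a}=\emptyset$ and the statement becomes vacuous.

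The main obstacle is the $A_2,a=2$ case. With $\l=m_1\l_1+m_2\l_2$ and (say) $m_1\geq m_2$, substituting $c_{\l,2}=2(m_1-m_2)^2/3$ and $M_\l=m_1^2/2$ reduces the target to $2m_2(2m_1+m_2)\geq 1$, which holds (in fact with value $\geq 6$) whenever $m_2\geq 1$. The residual boundary case $m_2=0$ (equivalently $\l=m\l_1$; $m_1=0$ is symmetric) is critical: there $c_{\l,2}$ and $a^2((\l,\l)-M_\l)$ coincide \emph{exactly}, so no refinement of Proposition~\ref{prop.complement} can separate them. To resolve it, I would extend Proposition~\ref{prop.equality} from $\l=\l_1$ to $\l=m\l_1$ for every $m\geq 1$: the case analysis on $\rho-\sigma(\rho)$ there carries over verbatim once one replaces $n\l_1$ by $mn\l_1$ in the key inequalities (e.g.\ $\langle mn\l_1,\alpha_1^{\vee}\rangle=mn\geq 2x+1$). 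This yields $\hat R_{mn\l_1,2}=\emptyset$, making the statement vacuous on the one family where the quantitative bound fails and completing the proof.
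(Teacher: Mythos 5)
Your proposal follows essentially the same route as the paper: both start from Corollary \ref{main.cor}(2), feed in the explicit minimizers of Theorem \ref{thm.min} through the decomposition $\mu_{n\l,a}=n\nu^1_{\l,a}+\nu^0_{\l,a}$ of Remark \ref{rm.ap}, and dispose of the boundary case $A_2$, $a=2$, $\l=m\l_1$ by showing the missing set is empty via Proposition \ref{prop.equality}. Your $A_2$ computation agrees with the paper's: its chain ends at $n^2(4m_1m_2+2m_2^2-1)\geq n^2$, which is exactly your $2m_2(2m_1+m_2)\geq 1$ once one notes the left side is an even integer; and your identification of $m_2=0$ as the case where the quadratic bound is exactly saturated, hence must be handled by vacuousness, is correct and matches the paper. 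One small simplification: no ``extension'' of Proposition \ref{prop.equality} is needed, since $R_{n(m\l_1),2}=R_{(nm)\l_1,2}$ and that proposition is already stated for every multiple of $\l_1$.

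The gap is in the cases you defer. For $B_2$, $G_2$, and $A_2$ with $a\geq 3$ (where $\nu^1_{\l,a}=0$) you reduce to the inequality $a^2((\l,\l)-M_\l)>1$ and promise to ``verify it in each case,'' conceding finitely many small exceptions to be checked later. But at least one exception is genuinely problematic: for $B_2$, $a=2$, $\l=\l_2$ one has $(\l,\l)=\tfrac12$ and $M_\l=\tfrac14$ (with the paper's normalization), so $a^2((\l,\l)-M_\l)=1$ exactly and your sufficient condition fails. Since $\mu_{\l_2,2}=\l_1\neq 0$ there, the case is not vacuous a priori, and neither the ``direct enumeration'' of $\hat R_{n\l,a}$ nor the ``generalization of Proposition \ref{prop.equality}'' you invoke is actually carried out — this is real unperformed work, not a routine check. (For comparison, the paper treats $B_2$ and $G_2$ by explicit computation using the boundedness of $\mu_{\l,a}$; its own displayed chain for $B_2$ also degenerates at $\l=\l_2$ because of the crude $-1$ inherited from Proposition \ref{prop.complement}, so this corner is delicate in the original too, but the paper does commit to explicit estimates for all the remaining cases, whereas your proposal leaves them all open.)
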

  
\begin{proof}
We can prove this by a direct computation for the rank $2$ simple Lie 
algebras $A_2, B_2$ and $G_2$ using Theorem \ref{thm.min} that gives
an explicit formula for $\mu_{\l,a}$.

For $A_2$ and $m_1\geq m_2$, from Theorem \ref{thm.min} we have
$$
(\mu_{n\l,a},\mu_{n\l,a})\leq (n(m_1-m_2)\l_2,n(m_1-m_2)\l_2)
=\frac{2}{3}n^2(m_1-m_2)^2
$$
By Corollary \ref{main.cor} we have 
\begin{align*}
(\hat{\mu},\hat{\mu})+2(\hat{\mu},\mu_{n\l,a}) &\geq  a^2n^2((\l,\l)
-\frac{(\l,\a_1)^2}{(\a_1,\a_1)}-1)-(\mu_{n\l,a},\mu_{n\l,a})\\
&\geq  a^2n^2(\frac{2}{3}(m_1^2+m_1m_2+m_2^2)-\frac{m_1^2}{2}-1)
-\frac{2}{3} n^2(m_1-m_2)^2\\
&= n^2(\frac{a^2-4}{6}m_1^2+\frac{2}{3}(a^2+2)m_1m_2
+\frac{2}{3}(a^2-1)m_2^2-1)\\
&\geq  n^2(4m_1m_2+2m_2^2-1)\\
& \geq  n^2
\end{align*}
except when $a=2$ and $m_2=0$. In the later case, Proposition
\ref{prop.equality} says that $R_{n\l,a}=\emptyset$ and the inequality holds
trivially. The argument is similar for 
the case $m_1\leq m_2$.

For $B_2$, $\frac{(\l,\beta)^2}{(\beta,\beta)}$ is either 
$\frac{m_1^2}{2}$ or $m_2^2$. We have
\begin{align*}
(\hat{\mu},\hat{\mu})+2(\hat{\mu},\mu_{n\l,a}) &\geq  
a^2n^2((\l,\l)-\frac{(\l,\a_1)^2}{(\a_1,\a_1)}-1)-(\mu_{n\l,a},\mu_{n\l,a})\\
&= a^2n^2(m_1^2+m_1m_2+\frac{m_2^2}{2}-\max\{\frac{m_1^2}{2},\frac{m_2^2}{4}\}-1)
-(\mu_{n\l,a},\mu_{n\l,a})\\
& \geq  n^2
\end{align*}
where in the last inequality we have used the fact that 
$(\mu_{\l,a},\mu_{\l,a})$ is bounded for $B_2$, 
see Theorem \ref{thm.min}. 

For $G_2$, $\frac{(\l,\beta)^2}{(\beta,\beta)}$ is either 
$\frac{m_1^2}{2}$ or $\frac{m_2^2}{6}$. Therefore we have
\begin{align*}
(\hat{\mu},\hat{\mu})+2(\hat{\mu},\mu_{n\l,a}) &\geq  a^2n^2((\l,\l)
-\frac{(\l,\a_1)^2}{(\a_1,\a_1)}-1)-(\mu_{n\l,a},\mu_{n\l,a})\\
&= a^2n^2(2m_1^2+6m_1m_2+6m_2^2-\max\{\frac{m_1^2}{2},
\frac{3m_2^2}{2}\}-1)-(\mu_{n\l,a},\mu_{n\l,a})\\
& \geq  n^2
\end{align*}
since $\mu_{\l,a}=0$  for $G_2$, see Theorem \ref{thm.min}.
\end{proof}


\section{Proof of Theorem \ref{thm.1}}
\lbl{sec.proof.thm1}

In this section we will prove Theorem \ref{thm.1} assuming Theorem
\ref{thm.min}. Corollary \ref{cor.minmax} implies that the shifted colored 
Jones polynomial defined by
\begin{equation}
\lbl{eq.hatJ}
\hat{J}^{\fg}_{T(a,b),\l}(q) =
q^{-\d^*_{T(a,b)}(\l)}J^{\mathfrak{g}}_{T(a,b),\l}(q)\in \mathbb{Z}[q] 
\end{equation}
satisfies
$$
\hat{J}^{\fg}_{T(a,b),\l}(q) = \frac{1}{\prod\limits_{\a\succ 0}(1-q^{(\l+\rho,\a)})}
\check{J}^{\fg}_{T(a,b),\l}(q)
$$
where
\be
\lbl{eq.Jcheck}
\check{J}^{\fg}_{T(a,b),\l}(q)=
\sum_{\hat{\mu} \in \hat{S}_{\l,a}} m^{\hat{\mu}+\mu_{\l,\a}}_{\l,a} 
q^{\frac{b}{2a}(\hat{\mu},\hat{\mu})+(-1+\frac{b}{a})(\hat{\mu},\rho)
+\frac{b}{a}(\hat{\mu},\mu_{\l,\a})}
\prod\limits_{\a\succ 0}(1-q^{(\hat{\mu}+\mu_{\l,a}+\rho,\a)})
\ee
with $\hat{S}_{\l,a}=S_{\l,a}-\mu_{\l,a}$ and $\hat{\mu}=\mu-\mu_{\l,a}$.

Fix a natural number $n$, observe that $(f_n(q))$ is c-stable if and only if 
$(f_{Mn+n_0}(q))$ is c-stable for all $n_0=0,1,...,M$. In what follows, we 
will use $M=ad$ and fix $n\equiv n_0$ mod $ad$.
\begin{proposition}
\lbl{main.cor2}
$(\hat{J}^{\fg}_{T(a,b),n\l}(q))$ is $c$-stable if and only if
\be
\lbl{eq.cor2}
\frac{1}{\prod\limits_{\a\succ 0}(1-q^{(n\l+\rho,\a)})}
\sum_{\hat{\mu} \in \hat{\calL}_{n_0\l,a} \cap\hat{P}_{an\lambda}}
m^{\hat{\mu}+\mu_{\l,\a}}_{\l,a} 
q^{\frac{b}{2a}(\hat{\mu},\hat{\mu})+(-1+\frac{b}{a})(\hat{\mu},\rho)
+\frac{b}{a}(\hat{\mu},\mu_{\l,\a})}
\prod\limits_{\a\succ 0}(1-q^{(\hat{\mu}+\mu_{\l,a}+\rho,\a)})
\ee
is $c$-stable. In that case, they have the same tails.
\end{proposition}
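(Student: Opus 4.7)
The plan is to apply Remark \ref{rm.cstbl}. By Corollary \ref{main.cor}(1), the index set in \eqref{eq.cor2} splits as
$\hat{\calL}_{n_0\l,a}\cap\hat{P}_{an\l} = \hat{S}_{n\l,a}\sqcup\hat{R}_{n\l,a}$,
so the sum in \eqref{eq.cor2} decomposes as
$$
\hat{J}^{\fg}_{T(a,b),n\l}(q) + D_n(q),
$$
where the first summand comes from $\hat{S}_{n\l,a}$ (and by \eqref{eq.hatJ}--\eqref{eq.Jcheck} reproduces $\hat{J}^{\fg}_{T(a,b),n\l}(q)$ after dividing by $\prod_{\a\succ 0}(1-q^{(n\l+\rho,\a)})$), while $D_n(q)$ collects the contributions of the missing set $\hat{R}_{n\l,a}$. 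It suffices to show $\delta^*(D_n(q))\geq cn^2$ for some $c>0$ and all large $n$; Remark \ref{rm.cstbl} then gives both directions of $c$-stability with identical tails.

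For the degree estimate, each summand of the numerator of $D_n(q)$ carries the factor $q^{Q(\hat\mu)}$ with
$$
Q(\hat\mu) = \frac{b}{2a}\bigl[(\hat\mu,\hat\mu)+2(\hat\mu,\mu_{n\l,a})\bigr] + \Bigl(\frac{b}{a}-1\Bigr)(\hat\mu,\rho),
$$
and the polynomial $\prod_{\a\succ 0}(1-q^{(\hat\mu+\mu_{n\l,a}+\rho,\a)})$ has constant term $1$. Proposition \ref{main.cor1} bounds the bracket below by $n^2$ on $\hat{R}_{n\l,a}$. For the linear piece, $\hat\mu+\mu_{n\l,a}\in P_{an\l}\subset\Lambda^+$ forces $(\hat\mu,\rho)\geq -(\mu_{n\l,a},\rho)$, and Theorem \ref{thm.min} shows that $\mu_{n\l,a}$ grows at most linearly in $n$, so $(\hat\mu,\rho)\geq -Cn$ for some constant $C$. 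Combining these, $Q(\hat\mu)\geq cn^2$ for $n\gg 0$ and some $c>0$. The reciprocal $1/\prod_{\a\succ 0}(1-q^{(n\l+\rho,\a)})$ expands in $\BZ[[q]]$ with constant term $1$ (since $(n\l+\rho,\a)>0$), so dividing preserves the lower bound and yields $\delta^*(D_n(q))\geq cn^2$ as required.

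The main potential obstacle is the sign of the linear term $(\frac{b}{a}-1)(\hat\mu,\rho)$: if $(\hat\mu,\rho)$ were allowed to be of order $-n^2$, it would cancel the quadratic lower bound coming from Proposition \ref{main.cor1}. The explicit, piecewise-linear description of the minimizer $\mu_{n\l,a}$ given in Theorem \ref{thm.min}, together with the containment $\hat\mu+\mu_{n\l,a}\in\Lambda^+$, is what pins down $(\hat\mu,\rho)\geq -Cn$ and keeps the net estimate at order $n^2$.
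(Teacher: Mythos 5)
Your proof is correct and follows essentially the same route as the paper: write the difference as the sum over the missing set $\hat{R}_{n\l,a}$, bound the $q$-degree of each of its summands below by a positive multiple of $n^2$ via Proposition \ref{main.cor1}, and invoke Remark \ref{rm.cstbl}. If anything, you are slightly more careful than the paper in controlling the linear term $(\tfrac{b}{a}-1)(\hat\mu,\rho)$, using dominance of $\hat\mu+\mu_{n\l,a}$ and the linear growth of $\mu_{n\l,a}$ from Theorem \ref{thm.min} to keep it at $O(n)$.
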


\begin{proof}
 Fix $a,b,\lambda$ and let $g_n(q)$ denote the difference between 
$\hat{J}_{T(a,b),n\lambda}(q)$ and Equation \eqref{eq.cor2}. Then
\begin{equation}
\lbl{eq.diff}
g_n(q) =
\frac{1}{\prod\limits_{\a\succ 0}(1-q^{(n\l+\rho,\a)})}
\sum_{\hat{\mu} \in \hat{R}_{n\l,a}}
m^{\hat{\mu}+\mu_{n\l,\a}}_{\l,a} 
q^{\frac{b}{2a}(\hat{\mu},\hat{\mu})+(-1+\frac{b}{a})(\hat{\mu},\rho)
+\frac{b}{a}(\hat{\mu},\mu_{n\l,\a})}
\prod\limits_{\a\succ 0}(1-q^{(\hat{\mu}+\mu_{n\l,a}+\rho,\a)})
\end{equation}
 Proposition \ref{main.cor1} implies that the minimum degree of the 
summands of Equation \eqref{eq.diff} is greater or equal to 
$\frac{b}{2a}n^2$ for $n\gg 0$. The proof then follows from Remark 
\ref{rm.cstbl}.

Proposition \ref{main.cor1} implies that we can replace the summation set 
$\hat{S}_{n\l,a}$ by $\hat{\calL}_{n\l,a} \cap\hat{P}_{an\lambda}$
without affecting the stability of  $\hat{J}^{\fg}_{T(a,b),n\l}(q)$: if 
$\hat{\mu}\in (\hat{\calL}_{\l,a}\cap \hat{P}_{an\lambda})\setminus 
\hat{S}_{n\lambda,a}$ then the minimum degree of the summand of Equation
\eqref{eq.cor2}
is 
\begin{align*}
\frac{b}{2a}(\hat{\mu},\hat{\mu})
+(-1+\frac{b}{a})(\hat{\mu},\rho)+\frac{b}{a}(\hat{\mu},\mu_{n\l,\a})
 &= \frac{b}{2a}((\hat{\mu},\hat{\mu})+2(\hat{\mu},\mu_{\l,a}))
-(\hat{\mu},\rho) \\
& \geq  \frac{b}{2a}n^2-(\hat{\mu},\rho)=\frac{b}{2a}n^2+O(n)
\end{align*} 
where the last inequality follows from Proposition \ref{main.cor1}.  By Remark
\ref{rm.ap} we have $\hat{\calL}_{n\l,a} =\hat{\calL}_{\l,a}$ and the Proposition 
follows.
\end{proof}

Let $t_{\l,\hat{\mu},a}(n)=m_{n\l,a}^{\hat{\mu}+\mu_{n\l,a}}$. Theorem 
implies that $t_{\l,\hat{\mu},a}$ is a quasi-polynomial.
From Lemma \ref{stability.lemma2}, Proposition \ref{main.cor1}, 
Proposition \ref{main.cor2} and together with the special case 
given in Section \ref{sec.sep} we conclude that

\begin{theorem}
\lbl{thm.stable.limit}
Fix a rank $2$ simple Lie algebras $\mathfrak{g}$, a dominant weight $\l$, 
and a torus knot $T(a,b)$. The colored Jones polynomial 
$\hat{J}^{\fg}_{T(a,b),n\l}(q)$ is $c$-stable and its $(n,x,q)$-tail is given by
\begin{align}
\lbl{stable.limit}
F_{T(a,b),\l}(n,x,q)&=
\frac{1}{\prod\limits_{\a\succ 0}(1-x^{(\l,\a)}q^{(\rho,\a)})}
\sum_{\hat{\mu} \in \hat{\calL}_{\l,a}\cap \Lambda^+} 
t_{\l,\hat{\mu},a}(n)
q^{\frac{b}{2a}(\hat{\mu},\hat{\mu})+(-1+\frac{b}{a})(\hat{\mu},\rho)
+\frac{b}{a}(\hat{\mu},\nu_{\l,a}^0)}x^{\nu_{\l,a}^1}
\\ \notag
&  \prod\limits_{\a\succ 0} (1-q^{(\hat{\mu}+\nu_{\l,a}^0+\rho,\a)}
x^{\nu_{\l,a}^1}) \,,
\end{align}
where $\mu_{n\l,a}=n\nu_{\l,a}^1+\nu_{\l,a}^0$.
\end{theorem}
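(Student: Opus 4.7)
The plan is to prove Theorem \ref{thm.stable.limit} by combining the reduction of Proposition \ref{main.cor2} with the quasi-polynomiality of the plethysm multiplicities from Theorem \ref{stab.theorem} and the two stability lemmas of Section \ref{sec.somelemmas}.

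First, I would invoke Proposition \ref{main.cor2} to replace the summation set $\hat S_{n\l,a}$ in the numerator $\check J^{\fg}_{T(a,b),n\l}(q)$ of Equation \eqref{eq.Jcheck} by the larger, more tractable set $\hat{\calL}_{n_0\l,a}\cap \hat P_{an\l}$. This is legitimate because Proposition \ref{main.cor1} forces the missing-point contribution in Equation \eqref{eq.diff} to have $q$-valuation at least $\frac{b}{2a}n^2+O(n)$, so by Remark \ref{rm.cstbl} it does not affect $c$-stability or the tail. After this step, $\hat J^{\fg}_{T(a,b),n\l}(q)$ equals, modulo a $c$-stable sequence with zero tail, a ratio $N_n(q)/D_n(q)$ with $D_n(q)=\prod_{\a\succ 0}(1-q^{(n\l+\rho,\a)})$ and $N_n(q)$ a finite sum over lattice points of $\hat{\calL}_{n_0\l,a}$ inside the polytope $\hat P_{an\l}$, which dilates linearly in $n$.

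Next, I would apply Lemma \ref{stability.lemma2} in the translated form of Remark \ref{rm.lat} to the numerator $N_n(q)$. The coefficient of each monomial $q^{Q(\hat\mu)}$ is $t_{\l,\hat\mu,a}(n)$ times the expansion of $\prod_{\a\succ 0}(1-q^{(\hat\mu+\mu_{n\l,a}+\rho,\a)})$: the first factor is a quasi-polynomial in $n$ by Theorem \ref{stab.theorem}, and the decomposition $\mu_{n\l,a}=n\nu^1_{\l,a}+\nu^0_{\l,a}$ from Remark \ref{rm.ap} makes the expansion of the second factor a finite sum of monomials whose $q$-exponents are affine in $n$. The quadratic exponent $\frac{b}{2a}(\hat\mu,\hat\mu)+(-1+\frac{b}{a})(\hat\mu,\rho)+\frac{b}{a}(\hat\mu,\mu_{n\l,a})$ has positive-definite quadratic part $\frac{b}{2a}(\cdot,\cdot)$, as required. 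Lemma \ref{stability.lemma2} then yields $c$-stability of $N_n(q)$ with tail equal to the bracketed sum over $\hat{\calL}_{\l,a}\cap\Lambda^+$ in Equation \eqref{stable.limit}. To incorporate the denominator, I would then apply (the $c$-stable analog of) Lemma \ref{stability.lemma1} once for each positive root: dividing by the factor $1-q^{(\l,\a)n+(\rho,\a)}$ multiplies the tail by $1/(1-x^{(\l,\a)}q^{(\rho,\a)})$, producing the prefactor $\prod_{\a\succ 0}(1-x^{(\l,\a)}q^{(\rho,\a)})^{-1}$ of Equation \eqref{stable.limit}.

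The main obstacle I expect is verifying the hypotheses of Lemma \ref{stability.lemma2} after the shift by $\mu_{n\l,a}$: namely, that the translated polytope $\hat P_{a\l}$ meets the interior of every positive coordinate ray from the shifted vertex, and that positive-definiteness of $Q$ survives restriction to the affine lattice $\hat{\calL}_{n_0\l,a}$. A secondary but routine point is extending Lemma \ref{stability.lemma1} from stable to $c$-stable sequences; the same induction on $k$ goes through since it uses only the limit definition and is insensitive to whether the tail depends on $n$.
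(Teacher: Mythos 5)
Your proposal follows essentially the same route as the paper: reduce via Proposition \ref{main.cor2} (which rests on Proposition \ref{main.cor1} and Remark \ref{rm.cstbl}), apply Lemma \ref{stability.lemma2} together with Remark \ref{rm.lat}, Theorem \ref{stab.theorem} and the decomposition $\mu_{n\l,a}=n\nu^1_{\l,a}+\nu^0_{\l,a}$ of Remark \ref{rm.ap} to the numerator, and absorb the denominator factors with Lemma \ref{stability.lemma1}. The two technical points you flag (hypotheses of Lemma \ref{stability.lemma2} after the shift, and the $c$-stable version of Lemma \ref{stability.lemma1}) are exactly the gaps the paper also leaves implicit, so your write-up is, if anything, slightly more explicit than the original.
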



\section{Proof of Theorem \ref{thm.max}}
\lbl{sec.thm.max}

In this section we prove Theorem \ref{thm.max}.
Since $\l$ is fixed, it suffices to maximize
$$
g(\mu) = \frac{b}{4}(\mu,\mu) +\left(-1+\frac{b}{2}\right)(\mu,\rho)
$$
on the set $S_{\l,a}$.

\begin{lemma}
\lbl{lem.positivity}
Let $\mu\in\Lambda^+$ and $\a \succ 0$ be a positive root 
such that $\mu+\a\in\Lambda^+$.  Then we have 
$$
(\mu,\mu)<(\mu+\a,\mu+\a)
$$
\end{lemma}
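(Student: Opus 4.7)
The plan is to expand the right-hand side and reduce the inequality to a one-line observation using only the positive definiteness of the form and the dominance of $\mu$; the hypothesis that $\mu+\alpha \in \Lambda^+$ will not even be needed.

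First I would write
\[
(\mu+\alpha,\mu+\alpha)-(\mu,\mu) = 2(\mu,\alpha)+(\alpha,\alpha),
\]
so it suffices to check that $2(\mu,\alpha)+(\alpha,\alpha) > 0$.

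Next, since $\alpha \succ 0$ is a positive root, it can be written as a $\BN$-linear combination of the simple roots, $\alpha=\sum_{i=1}^{r} n_i \alpha_i$ with $n_i \in \BN$ (and not all zero). Because $\mu \in \Lambda^+$, the pairing $(\mu,\alpha_i) \geq 0$ for every simple root $\alpha_i$ (this is equivalent to $\langle \mu,\alpha_i^\vee\rangle \geq 0$, which is the defining condition of dominance). Hence $(\mu,\alpha) = \sum_i n_i (\mu,\alpha_i) \geq 0$.

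Finally, the inner product $(\cdot,\cdot)$ on $\Lambda_{\BR}$ is positive definite and $\alpha \neq 0$, so $(\alpha,\alpha)>0$. Combining the two estimates gives $2(\mu,\alpha)+(\alpha,\alpha) \geq (\alpha,\alpha) > 0$, which proves the strict inequality. There is no real obstacle here; the statement is essentially a reformulation of dominance plus positive definiteness, and the only thing worth emphasizing is that strictness comes from $(\alpha,\alpha)>0$, not from the term $(\mu,\alpha)$ (which can vanish, e.g.\ when $\mu=0$).
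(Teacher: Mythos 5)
Your proof is correct and takes essentially the same route as the paper: expand $(\mu+\a,\mu+\a)-(\mu,\mu)=2(\mu,\a)+(\a,\a)$ and use dominance plus positive definiteness. In fact your version is slightly more careful than the paper's, which asserts $(\mu,\a)>0$ even though dominance only guarantees $(\mu,\a)\geq 0$ (e.g.\ $\mu=0$); you correctly locate the strictness in $(\a,\a)>0$.
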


\begin{proof}
We have:
\begin{align*}
(\mu+\a,\mu+\a)-(\mu,\mu)=2(\mu,\a)+(\a,\a)
\end{align*}
Now $(\mu,\a)>0$ since $\mu$ is dominant and $\a$ is a positive root
and $(\a,\a)>0$ since $(\cdot,\cdot)$ is positive definite.
\end{proof}
If $\nu \in \Pi_\l$ then
$\nu=\l-\a'$ where $\a'\succ 0$. Since $\rho-\sigma(\rho)\succ 0$, we have
$\mu=a\l-\a$ where $\mu \in S_{a,\l}$ and  $\a\succ 0$. It follows from the 
above lemma that $M_{\l,a}=a\l$ is the unique maximizer of $f(\mu)$. 

Next, we compute the plethysm multiplicity $m_{\l,a}$.
From Lemma \ref{lem.mula} we have
\begin{align*}
m_{\l,a}^{a\l}  &= \sum\limits_{\sigma\in W} (-1)^\sigma 
m_\l^{\frac{a\l+\rho-\sigma(\rho)}{a}}\\
 &= \sum\limits_{\sigma\in W} (-1)^\sigma m_\l^{\l+\frac{\rho-\sigma(\rho)}{a}}\\
&= 1
\end{align*}
since $\l+ \frac{\rho-\sigma(\rho)}{a}\succ \l$ if 
$\frac{\rho-\sigma(\rho)}{a}\in\Lambda_r$,
with equality only when $\sigma=1$. This concludes the proof of Theorem
\ref{thm.max}.
\qed


\section{Proof of Theorem \ref{thm.min} }
\lbl{sec.thm.min}

This section is devoted to the proof of Theorem \ref{thm.min}, done by
a case-by-case analysis for a fixed simple Lie algebra $\fg$ of rank $2$.
Let $\l=m_1\l_1+m_2\l_2$ and $\mu=u_1\l_1+u_2\l_2$ be dominant weights. 
Since $\l$ is fixed, it suffices to minimize 
$$
g^*(\mu) = \frac{b}{4}(\mu,\mu)
+\left(-1+\frac{b}{2}\right)(\mu,\rho)
$$
on the set $S_{\l,a}$. We use the following lemma and its consequence, 
Corollary \ref{lem.unique},  in the proof
of Theorem \ref{thm.min}.

\begin{lemma}
\lbl{lem.pos}
$g^*(\mu) \geq 0$ with equality if and only if $\mu=0$.
\end{lemma}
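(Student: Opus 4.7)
The plan is to write $g^*(\mu)$ as a manifestly non-negative sum of two terms and argue that vanishing of the whole forces $\mu=0$.

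First, since $(\cdot,\cdot)$ is the (positive definite) normalized inner product on $\La$, we have $(\mu,\mu)\geq 0$ with equality precisely at $\mu=0$, and the coefficient $\tfrac{b}{4}$ is strictly positive.

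Next, for a dominant weight $\mu = u_1\l_1 + u_2\l_2$ with $u_1,u_2\geq 0$, I would observe that
\[
(\mu,\rho) \;=\; u_1(\l_1,\rho) + u_2(\l_2,\rho) \;\geq\; 0,
\]
since $\rho$ lies in the interior of the fundamental Weyl chamber, which forces $(\l_i,\rho)>0$ for each fundamental weight (equivalently, the inverse Cartan matrix of a simple Lie algebra has strictly positive entries). Because the torus-knot condition $0<a<b$ forces $b\geq 2$, the coefficient $\tfrac{b}{2}-1$ is non-negative as well.

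Combining, $g^*(\mu) = \tfrac{b}{4}(\mu,\mu) + \bigl(\tfrac{b}{2}-1\bigr)(\mu,\rho)$ is a sum of two non-negative terms, so $g^*(\mu)\geq 0$. If $g^*(\mu)=0$, then in particular the strictly positive-weighted term $\tfrac{b}{4}(\mu,\mu)$ vanishes, forcing $\mu=0$. There is essentially no serious obstacle; the only mild edge case is $b=2$, where the second coefficient vanishes, but the first term alone already provides strict positivity for $\mu\neq 0$.
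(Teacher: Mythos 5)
Your proof is correct and follows essentially the same route as the paper: split $g^*(\mu)$ into the positive-definite quadratic term and the non-negative linear term (whose coefficient is non-negative because $b>a$), and note that vanishing forces $(\mu,\mu)=0$, hence $\mu=0$. The only cosmetic difference is that you justify $(\mu,\rho)\geq 0$ by expanding $\mu$ in fundamental weights and invoking $(\l_i,\rho)>0$, whereas the paper expands $\rho$ as a positive combination of simple roots and uses $(\mu,\a_i)\geq 0$ for dominant $\mu$ --- the two arguments are equivalent.
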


\begin{proof}
$g^*(\mu)$ is non-negative since $(\cdot,\cdot)$ is a positive-definite 
form and $(\mu,\rho)\geq 0$ since $\mu$ is a dominant weight and $\rho$ is a 
linear combination of simple roots with positive coefficients. 
If $g^*(\mu)=0$ then $(\mu,\mu)=0$ which implies that $\mu=0$.
\end{proof}

\begin{corollary}
\lbl{lem.unique}
If $m_{\l,a}^0\neq 0$ then $\mu_{\l,a}=0$ is the unique minimizer of 
$g^*(\mu)$.
\end{corollary}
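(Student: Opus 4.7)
The plan is to deduce this corollary as a near-immediate consequence of Lemma \ref{lem.pos}. First I would observe that the condition $m_{\l,a}^0 \neq 0$ is precisely what places the weight $0$ inside the summation set $S_{\l,a}$; that is, $0$ becomes an admissible competitor in the minimization problem defining $\mu_{\l,a}$.

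Next, by Lemma \ref{lem.pos}, the function $g^*$ is non-negative on all dominant weights and vanishes only at $0$. Since $S_{\l,a} \subseteq \Lambda^+$, this inequality applies uniformly on $S_{\l,a}$. Evaluating at $\mu = 0 \in S_{\l,a}$ gives $g^*(0) = 0$, which realizes the global lower bound of $g^*$ on $\Lambda^+$, and in particular on $S_{\l,a}$. Hence $0$ is a minimizer of $g^*$ restricted to $S_{\l,a}$.

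Finally, uniqueness is automatic: if $\mu \in S_{\l,a}$ satisfied $g^*(\mu) = 0$, then the equality case of Lemma \ref{lem.pos} forces $\mu = 0$. Thus $\mu_{\l,a} = 0$ and it is the unique minimizer. There is no real obstacle here; the corollary is essentially a restatement of Lemma \ref{lem.pos} in the presence of the membership condition $0 \in S_{\l,a}$, so the proof is a one-line deduction in the paper.
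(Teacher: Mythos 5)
Your proof is correct and matches the paper exactly: the corollary is stated there without a separate proof precisely because it is the immediate consequence of Lemma \ref{lem.pos} that you describe (the hypothesis $m_{\l,a}^0\neq 0$ makes $0$ an admissible competitor, $g^*(0)=0$ attains the global lower bound, and the equality case of the lemma gives uniqueness). No gaps.
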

We give the proof of Theorem \ref{thm.min} in Section \ref{sub.A2} below.

\subsection{Theorem \ref{thm.min} for $A_2$}
\lbl{sub.A2}

\subsubsection{Plethysm multiplicities for $A_2$}
\lbl{sub.sub.A2}
There are two simple roots $\{\a_1,\a_2\}$ of $A_2$ and three positive
roots $\{\a_1,\a_1+\a_2,\a_2\}$ shown in Figure \ref{f.A2}.
The Kostant function $p(u,v)=p(u \a_1+v\a_2)$ is given by
$$
p(u,v)=1+\min(u,v) \,.
$$

\begin{figure}[htpb]
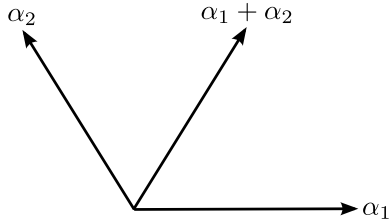

$$
\psdraw{diagram_a2}{2in}
$$
\caption{The two chambers of the Kostant partition function of $A_2$. 
Kostant chambers from left to right: $u\leq v$, $u\geq v$.}
\lbl{f.A2}
\end{figure}
Let $\l=m_1\l_1+m_2 \l_2$ denote a dominant weight and $m_1\geq m_2$.
Assuming $\mu=u_1\lambda_1+u_2\lambda_2\in\Pi_\l$, by Kostant's formula we have
\begin{align*}
m^\mu_\lambda &= \sum\limits_{\sigma\in W}(-1)^\sigma 
p(\sigma(\lambda+\rho)-\mu-\rho) \\ \notag
&=  p(\frac{2m_1+m_2}{3}-\frac{2u_1+u_2}{3},\frac{m_1+2m_2}{3}
-\frac{u_1+2u_2}{3}) \\ \notag
&  -
p(\frac{2m_1+m_2}{3}-\frac{2u_1+u_2}{3},\frac{m_1-m_2}{3}
-\frac{u_1+2u_2}{3}-1) \\
&=
\begin{cases}
1 + \frac{2m_1+m_2}{3}-\frac{2u_1+u_2}{3} & \text{if}~m_1-m_2<u_1-u_2\\
1 + \frac{m_1+2m_2}{3}-\frac{u_1+2u_2}{3} & 
\text{if}~u_1-u_2\leq m_1-m_2\leq u_1+2u_2+3\\
1+m_2 & \text{if}~ m_1-m_2 > u_1+2u_2+3
\end{cases}
\end{align*}
Lemma \ref{lem.mula} gives 
\begin{align*}
m^\mu_{\lambda,2} &=
\sum\limits_{\sigma\in S_3}(-1)^\sigma m^{\frac{\mu+\rho-\sigma(\rho)}{2}}_\lambda \\
&= m_\lambda^{\frac{1}{2}(u_1,u_2)}-m_\lambda^{\frac{1}{2}(u_1+2,u_2-1)}-
m_\lambda^{\frac{1}{2}(u_1-1,u_2+2)}+m_\lambda^{\frac{1}{2}(u_1,u_2+3)}+
m_\lambda^{\frac{1}{2}(u_1+3,u_2)}-m_\lambda^{\frac{1}{2}(u_1+2,u_2+2)}
\end{align*}

Let us consider $\mu\in S_{\l,2}$. There are four cases.

\noindent
{\bf Case 1:} $u_1, u_2$ are even.  
\begin{align}
\lbl{case1}
m^{\mu}_{\l,2}&= m^{(\frac{u_1}{2},\frac{u_2}{2})}_{\l}
-m^{(\frac{u_1+2}{2},\frac{u_2+2}{2})}_{\l}
=
\begin{cases}
1 & \text{if}~  u_1+2u_2\geq 2(m_1-m_2)\\
0 & \text{if}~  u_1+2u_2 < 2(m_1-m_2)
\end{cases}
\end{align}
{\bf Case 2:} $u_1$ even and $u_2$ odd. 
\begin{align}
\lbl{case2}
 m^{\mu}_{\l,2} &= m^{(\frac{u_1}{2},\frac{u_2+3}{2})}_{\l}
-m^{(\frac{u_1+2}{2},\frac{u_2-1}{2})}_{\l}\\
\notag
&=
\begin{cases}
-1 & \text{if}~ u_1-u_2 \leq 2(m_1-m_2)\leq u_1+2u_2\\
0 & \text{if}~   2(m_1-m_2) < u_1-u_2~\text{or}~ 2(m_1-m_2)>u_1+2u_2 
\end{cases}
\end{align}
{\bf Case 3:} $u_1$ odd and $u_2$ even.
\begin{align*}
 m^{\mu}_{\l,2}= m^{(\frac{u_1+3}{2},\frac{u_2}{2})}_{\l}
-m^{(\frac{u_1-1}{2},\frac{u_2+2}{2})}_{\l} =
\begin{cases}
-1 & \text{if}~  2(m_1-m_2)<u_1-u_2\\
0 & \text{if}~   2(m_1-m_2)\geq u_1-u_2 
\end{cases}
\end{align*}
{\bf Case 4:} $u_1$ and $u_2$ are odd.
\begin{equation*}
\lbl{case4}
 m^{\mu}_{\l,2}=0 
\end{equation*}

\begin{corollary}
\lbl{cor.nonzero}
For $A_2$, if $m_{\l,2}^\mu\neq 0$ then $u_1+2u_2\geq 2(m_1-m_2)$.
\end{corollary}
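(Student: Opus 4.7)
The plan is to simply read off the corollary from the four-case computation of $m^\mu_{\l,2}$ that immediately precedes the statement. In each case we verify that the condition for $m^\mu_{\l,2}$ to be nonzero forces $u_1+2u_2\geq 2(m_1-m_2)$.

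First, I would dispose of Case~4 ($u_1,u_2$ both odd) trivially, since there $m^\mu_{\l,2}=0$ always, so there is nothing to check. Next, Cases~1 and~2 are immediate: in Case~1 the nonvanishing condition is literally $u_1+2u_2\geq 2(m_1-m_2)$, and in Case~2 the nonvanishing condition is $u_1-u_2\leq 2(m_1-m_2)\leq u_1+2u_2$, whose right half is exactly the desired inequality.

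The only case requiring a brief additional argument is Case~3 ($u_1$ odd, $u_2$ even), where nonvanishing requires $2(m_1-m_2)<u_1-u_2$. Here I would observe that since $\mu\in\Lambda^+$ we have $u_2\geq 0$, hence $u_1-u_2\leq u_1+2u_2$, and therefore
\[
2(m_1-m_2)<u_1-u_2\leq u_1+2u_2,
\]
which gives the inequality (in fact strictly).

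I do not anticipate any obstacle: the corollary is a direct bookkeeping consequence of the case analysis already carried out, and the only non-tautological step is the elementary chain of inequalities using $u_2\geq 0$ in Case~3.
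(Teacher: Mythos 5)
Your proof is correct and is essentially the argument the paper intends: the corollary is stated as an immediate consequence of the four-case computation of $m^\mu_{\l,2}$, and you read off the inequality case by case exactly as the authors do implicitly. Your extra observation in Case~3 --- that $\mu\in\Lambda^+$ gives $u_2\geq 0$, hence $u_1-u_2\leq u_1+2u_2$ --- is the one small step the paper leaves unstated, and you supply it correctly.
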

If $m_1\leq m_2$ we have a similar corollary:
\begin{corollary}
\lbl{cor.nonzero1}
For $A_2$, if $m_{\l,2}^\mu\neq 0$ then $2u_1+u_2\geq 2(m_2-m_1)$.
\end{corollary}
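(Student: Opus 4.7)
The plan is to deduce Corollary \ref{cor.nonzero1} from Corollary \ref{cor.nonzero} by invoking the outer automorphism symmetry of $A_2$ rather than redoing the four-case Kostant analysis of Section \ref{sub.sub.A2}. The Dynkin diagram of $A_2$ has a nontrivial involution $\omega$ that interchanges the two simple roots, and hence the two fundamental weights $\lambda_1 \leftrightarrow \lambda_2$. At the level of irreducible representations, $\omega$ realizes duality, since the longest Weyl element acts as $-\omega$ on the weight lattice: $V_{m_1\lambda_1+m_2\lambda_2}^{\,*} \cong V_{m_2\lambda_1+m_1\lambda_2}$.

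The key compatibility I would verify is that $\omega$ intertwines the Adams operation $\psi_a$. This is immediate because $\psi_a$ merely rescales weights by $a$, and $\omega$ permutes weights; equivalently, $\psi_a$ commutes with duality. Applying $\omega$ to both sides of $\psi_a(\mathrm{ch}_\lambda) = \sum_\mu m_{\lambda,a}^\mu \,\mathrm{ch}_\mu$ therefore yields the symmetry
$$
m^{u_1\lambda_1 + u_2\lambda_2}_{m_1\lambda_1 + m_2\lambda_2,\, a} \;=\; m^{u_2\lambda_1 + u_1\lambda_2}_{m_2\lambda_1 + m_1\lambda_2,\, a}.
$$

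With this in hand, assume $m_1 \leq m_2$ and $m^\mu_{\lambda,2} \neq 0$ with $\mu = u_1\lambda_1 + u_2\lambda_2$ and $\lambda = m_1\lambda_1 + m_2\lambda_2$. The symmetry transports this nonvanishing to the statement $m^{u_2\lambda_1+u_1\lambda_2}_{m_2\lambda_1+m_1\lambda_2,\,2} \neq 0$, and since $m_2 \geq m_1$ the hypothesis of Corollary \ref{cor.nonzero} is satisfied for the swapped data. Its conclusion, read with $(m_1,m_2,u_1,u_2)$ replaced by $(m_2,m_1,u_2,u_1)$, is $u_2 + 2u_1 \geq 2(m_2 - m_1)$, which is exactly the claimed bound. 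The only step requiring care is the symmetry identity for plethysm multiplicities; this is standard, but if one prefers to avoid invoking it, an equivalent route is simply to rerun the Case 1--4 analysis of Section \ref{sub.sub.A2} after interchanging the roles of the two coordinates, which produces the same inequality with the indices swapped.
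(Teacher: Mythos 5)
Your proposal is correct. The paper itself offers no separate argument for Corollary \ref{cor.nonzero1}: it simply remarks that for $m_1\leq m_2$ one has ``a similar corollary,'' the intended proof being a rerun of the Case 1--4 computation of Section \ref{sub.sub.A2} with the roles of the two coordinates exchanged --- exactly the fallback you mention at the end. Your primary route is genuinely different and cleaner: instead of repeating the Kostant analysis, you transport Corollary \ref{cor.nonzero} across the diagram involution $\omega$ of $A_2$. The key identity $m^{u_1\l_1+u_2\l_2}_{m_1\l_1+m_2\l_2,\,a}=m^{u_2\l_1+u_1\l_2}_{m_2\l_1+m_1\l_2,\,a}$ is indeed standard and your justification is sound: $\omega$ lifts to a Lie algebra automorphism (equivalently, coincides with $-w_0$, hence with duality), it permutes the irreducibles by $V_\l\mapsto V_{\omega(\l)}$ while preserving the dominant chamber, and it commutes with $\psi_a$ on the character ring since $\psi_a(e_\mu)=e_{a\mu}$ and $\omega$ is linear on weights; applying $\omega$ to Equation \eqref{eq.psila} then gives the identity. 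Substituting the swapped data into Corollary \ref{cor.nonzero} (whose implicit hypothesis $m_1\geq m_2$ from Section \ref{sub.sub.A2} is satisfied after the swap) yields $u_2+2u_1\geq 2(m_2-m_1)$ as required; and when $m_1>m_2$ the inequality is vacuous anyway since $u_1,u_2\geq 0$ for $\mu\in S_{\l,2}\subset\La^+$. What your approach buys is a formal deduction with no new computation; what the paper's (implicit) approach buys is independence from the symmetry identity, at the cost of repeating the four-case bookkeeping. Both are valid.
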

\subsubsection{The minimizer for $A_2$}~\\
{\bf Case 1:} $a=2$. By Corollary \ref{cor.nonzero} it suffices to minimize
$g^*(\mu)$ over subset 
$\{\mu\in S_{\l,2}:u_1,u_2\in\BN,u_1+2u_2\geq 2(m_1-m_2)\}$ of $S_{\l,2}$. 
We have
\begin{align*}
g^*(\mu) &= \frac{b}{4}(\mu,\mu)
+\left(-1+\frac{b}{2}\right)(\mu,\rho)\\
&=  \frac{b}{6}(u_1^2+u_1u_2+u_2^2)+(-1+\frac{b}{2})(u_1+u_2)\\
&= \frac{b}{6}((u_2+\frac{u_1}{2})^2+\frac{3u_1^2}{4})
+\frac{b-2}{4}(u_1+u_1+2u_2)\\
&\geq \frac{b}{8}u_1^2+\frac{b-2}{4}u_1+\frac{b}{6}(m_1-m_2)^2
+\frac{b-2}{2}(m_1-m_2)\\
&\geq  \frac{b}{6}(m_1-m_2)^2+\frac{b-2}{2}(m_1-m_2)
\end{align*}
with equality if and only if $u_1=0,u_2=m_1-m_2$. 

Next we show that $\mu_{\l,2}=(m_1-m_2)\l_2\in S_{\l,2}$. Indeed, 

\begin{enumerate}
\item If $m_1-m_2\equiv 0$ (mod 2) then 
$\mu_{\l,2}=2\nu-(\rho-\sigma(\rho))\in S_{\l,2}$ where 
$\nu=\frac{m_1-m_2}{2}\l_2\in \Pi_\l$ and $\sigma=1$.
\item If $m_1-m_2\equiv 1$ (mod 2) then 
$\mu_{\l,2}=2\nu-(\rho-\sigma(\rho)) \in S_{\l,2}$ where 
$\nu=\frac{m_1-m_2+3}{2}\l_2\in \Pi_\l$ and $\rho$ such that
$\rho-\sigma(\rho)=3\l_2$.
\end{enumerate}
Note that from the formula for $m_{\l,2}^\mu$ in 
Equations \eqref{case1} and \eqref{case2} we have 
$m_{\l,2}^{(m_1-m_2)\l_2}=1$ which proves part (a). Part (b) is obvious.
The case $m_1\leq m_2$ is similar.

\noindent
{\bf Case 2:} $a=3$.  
\lbl{case2again}
From Equation \eqref{eq.Sla}, we have
$$
m^0_{\lambda,3}=m^0_\lambda+m^{\lambda_1}_\lambda+m^{\lambda_2}_\lambda
$$
Since the fundamental group for $A_2$ consists of only three elements
 $0,~\lambda_1,~\lambda_2$, at least one of the terms on the right hand 
side is greater than zero. Therefore $m^0_{\lambda,3}>0$ and it follows from 
Lemma \ref{lem.pos} that $\mu_{\l,3}=0$ for all $\lambda$. 
Therefore part (b) follows. 
Part (a) follows from Corollary \ref{lem.unique} and the fact that 
$m^0_{\lambda,3}>0$.\\
\noindent
{\bf Case 3:} $a \geq 4$. 
\lbl{typical.case} 

{\bf Claim}. At most one term on the right hand side of Equation 
\eqref{eq.mula} is nonzero.

\begin{proof}
 Indeed, if there are $\sigma_1,\sigma_2$ in the Weyl group for 
$A_2$ such that
$m_{\l}^{\frac{\mu+\rho-\sigma_1(\rho)}{a}}\neq 0$  and 
$m_{\l}^{\frac{\mu+\rho-\sigma_2(\rho)}{a}}\neq 0$ 
 then $\frac{\mu+\rho-\sigma_1(\rho)}{a}
-\frac{\mu+\rho-\sigma_2(\rho)}{a}\in\Lambda_r$. 
 Equivalently, $(\rho-\sigma_1(\rho))-(\rho-\sigma_2(\rho))\in a\Lambda_r$. 
This is a contradiction since $a\geq 4$ and by \cite{Ko1},
$$
\rho-\sigma(\rho)=\sum\limits_{\a\in\Delta^{+}:\sigma^{-1}(\a)\in \Delta^{-}}\a
$$
which do not belong to $a\Lambda_r$ if $a\geq 4$. Here $\Delta^+$ is the set 
of positive roots and $\Delta^-=-\Delta^+$. 
\end{proof}

\noindent
{\bf Case 3.1:}
$\l\in\Lambda_r$, i.e., $m_1-m_2\equiv 0 \bmod 3$. By the above claim we 
have $m^0_{\l,a}=m_\l^{\frac{\rho-\sigma(\rho)}{a}}$ for some $\sigma$. It's easy 
to see that the only $\sigma$ for which $\frac{\rho-\sigma(\rho)}{a}$ is a 
weight is when $\sigma=1$ and therefore $m_{\l,a}^0=m_\l^0>0$. It follows from 
Lemma \ref{lem.pos} that $\mu_{\l,a}=0$. Therefore part (b) follows for this 
case. Part (a) follows from Corollary \ref{lem.unique} and the fact that 
$m^0_{\lambda,3}>0$.\\

\noindent
{\bf Case 3.2:} 
If $\l\not\in \Lambda_r$, or equivalently $m_1-m_2\not\equiv 0 
\bmod 3$ then $m_{\l,a}^0=m_\l^0=0$
so $\mu_{\l,a}\neq 0$. 
By the above claim, we have
$$
m_{\l,a}^\mu=(-1)^{\s} m_{\l}^{\frac{\mu+\rho-\sigma(\rho)}{a}}
$$
for some$\sigma$. Furthermore,   $m_{\l}^{\frac{\mu+\rho-\sigma(\rho)}{a}}\neq 0$ 
if and only if $\frac{\mu+\rho-\sigma(\rho)}{a}=\nu\in\Pi_\l$ or equivalently, 
$\mu=a\nu-(\rho-\sigma(\rho))$.
Let $\rho-\sigma(\rho)=s\lambda_1+t\lambda_2$, where 

\be
\lbl{eq.orbit}
\begin{array}{|c|cccccc|} \hline
(s,t) & 
(0,0) & (-1,2) & (1,-2) & (0,3) & (3,0) & (2,2)
\\ \hline 
(-1)^{\s} & 1 & -1 & -1 & 1 & 1 & -1
\\ \hline 
\end{array}
\ee
So if $\nu=v_1\l_1+v_2\l_2$ then $\mu=(av_1-s)\l_1+(av_2-t)\l_2$. Since 
$\mu$ is a positive weight, we have
we have
\begin{align*}
av_1-s &\geq  0\\
av_2-t & \geq 0
\end{align*}
Since $a\geq 4$ and $|s|, |t| \leq 3$, these inequalities imply that $v_1,v_2\geq 0$, i.e.,
$\nu$ is also a positive weight. There are two possibilities for $\l$.

\noindent
{\bf Case 3.2.1:}
$\l_1\in\Pi_\l$, i.e., $m_1\equiv m_2+1 \bmod 3$. Then we can 
choose $\nu_0=\l_1$ and $\sigma_0$ to be the unique element
 in $\W$ such that $\rho-\sigma_0(\rho)=3\l_1$. We will prove that
  $\mu_{\l,a}=a\nu_0-(\rho-\sigma_0(\rho))=(a-3)\l_1$ is the minimizer. 
Indeed, let $\mu=a\nu-(\rho-\sigma(\rho))\in S_{\l,a}$ where 
$\nu\in\Pi_\l$ as above.

\noindent
{\bf Case 3.2.1.1:}
If $\nu=\l_1$ then for $\mu$ to be a dominant weight we should have, 
according to Table \eqref{eq.orbit},
\begin{align*}
\rho-\sigma(\rho)=
\begin{cases}
0 &\text{which gives}\,\,\, \mu=a\l_1\\
\l_1-2\l_2  &\text{which gives}\,\,\, \mu=(a-1)\l_1+2\l_2\\
3\l_1 &\text{which gives}\,\,\, \mu=(a-3)\l_1=\mu_{\l,a}
\end{cases}
\end{align*}
It is easy to check that $g^*(\mu)> g^*(\mu_{\l,a})$ for the first two values 
of $\mu$.

\noindent
{\bf Case 3.2.1.2:}
 If $\nu\neq \l_1$, let $\nu=v_1\l_1+v_2\l_2$ then we have $v_1,v_2\geq 0$ and 
$v_1+v_2\geq 3$, since the only cases where $v_1+v_2 < 3$ are  
$\nu=\l_2$ and $\l_1+\l_2$ but these weights donot belong in $\Pi_\l$. Let 
$\nu=a\nu-(\rho-\sigma(\rho))=(av_1-s)\l_1+(av_2-t)\l_2$ as before.
 We have
\begin{align*}
g^*(\mu)&=\frac{b}{2a}(\mu,\mu)
+\left(-1+\frac{b}{a}\right)(\mu,\rho)\\
&= \frac{b}{3a}(u_1^2+u_1u_2+u_2^2)+(-1+\frac{b}{a})(u_1+u_2)\\
&= \frac{b}{3a}(a^2(v_1^2+v_1v_2+v_2^2)-2a(v_1+v_2)(s+t)+s^2+st+t^2)\\
& +  (-1+\frac{b}{a})(a(v_1+v_2)-s-t)
\end{align*}

It is easy to check that for all 
$(s,t)\in \{(0,0),(-1,2),(1,-2),(0,3),(3,0),(2,2)\}$ and 
$(v_1,v_2): v_1, v_2 \geq 0, v_1+v_2\geq 3$, we have

\begin{align*}
a^2(v_1^2+v_1v_2+v_2^2)-2a(v_1+v_2)(s+t)+s^2+st+t^2 & >  (a-3)^2\\
a(v_1+v_2)-s-t & > a-3
\end{align*}
and therefore $g^*(\mu) > \frac{b}{3a}(a-3)^2
+(-1+\frac{b}{a})(a-3)=g^*(\mu_{\l,a})$ for all $\mu\neq\l_1$.

The above argument showed that $\mu_{\l,a}=(a-3)\l_1$ is the unique minimizer, 
and note that $m_{\l,a}^{(a-3)\l_1}=m_\l^{\l_1}\neq 0$ since 
$\l_1\in\Pi_\l$. This proves parts (a) and (b) for Case 3.2.1. \\
{\bf Case 3.2.2:}
$\l_2\in \Pi_\l$ or equivalently, $m_1\equiv m_2+2 \bmod 3$. The 
proof for this is identical to the one above.

This completes the proof of Theorem \ref{thm.min} for $A_2$.
\qed

\subsection{Theorem \ref{thm.min} for $B_2$}
\lbl{sub.B2}

There are two simple roots $\{\a_1,\a_2\}$ and four positive roots
$\{a_1,\a_2,\a_1+\a_2,\a_1+2\a_2\}$ of $B_2$ shown in Figure \ref{f.B2}.
The Kostant partition function $p(u,v)=p(u \a_1+v \a_2)$ is given by \cite{Ta}
\begin{align}
\lbl{eq.kostantB2}
p(u,v)&=
\begin{cases}
b(v) &\text{if $u\geq v$}\\
b(v)-\frac{(v-u)(v-u+1)}{2}   &\text{if $u\leq v\leq 2u$}\\
\frac{(u+1)(v+2)}{2}  &\text{if $2u\leq v$}
\end{cases},
\qquad
b(n)=\frac{n^2}{4}+n+
\begin{cases}
1 &\text{if $2 | n$} \\
\frac{3}{4} &\text{if $2 \not{|} n$}
\end{cases} \,.
\end{align}
There are three Kostant chambers shown in Figure \ref{f.B2}.
\begin{figure}[htpb]
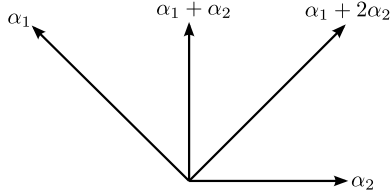

$$
\psdraw{diagram_b2}{2in}
$$
\caption{The three chambers of the Kostant partition function of $B_2$.
Kostant chambers from left to right: $u\geq v$, $u\leq v\leq 2u$, $u\geq 2v$.}
\lbl{f.B2}
\end{figure}
Let $\l=m_1\l_1+m_2\l_2$ denote a dominant weight. 
In weight coordinates we have
$$
\rho-\s(\rho)=s \l_1+t \l_2
$$ 
where
\be
\lbl{eq.orbitB2}
\begin{array}{|c|cccccccc|} \hline
(s,t) & 
(0, 0) & (2,-2) & (-1, 2) & (-1, 4) & (3,-2) & (3,0) & (0,4) & (2,2)
\\ \hline 
(-1)^{\s} & 1 & -1 & -1 & 1 & 1 & -1 & -1 & 1
\\ \hline 
\end{array}
\ee
Lemma \ref{lem.mula} implies that
\begin{align}
\lbl{eq.B2mu0}
m_{\l,a}^0 = m_\l^0 + \begin{cases}
-m_\l^{\l_2}-m_\l^{2\l_2}+m_\l^{\l_1+\l_2} &\text{if} \quad a=2\\
-m_\l^{\l_1} &\text{if} \quad a=3\\
-m_\l^{\l_2} &\text{if} \quad a=4\\
0 &\text{if} \quad a\geq 5
\end{cases}
\end{align}

\noindent
{\bf Case 1:} $a=2$. Equation \eqref{eq.B2mu0} implies that
\begin{equation}
\lbl{eq.B2case1}
m^0_{\lambda,2}=m^0_\lambda-m^{2\lambda_2}_\lambda 
-m^{\lambda_2}_\lambda+m^{\lambda_1+\lambda_2}_\lambda \,.
\end{equation}

\noindent
{\bf Case 1.1:}
$\l\in\Lambda_r$, i.e., $m_2\equiv 0 \bmod 2$.  In this case, we have 
$\lambda_1+\lambda_2$, $\lambda_2\not\in\Lambda_r$, and therefore
$m^{\lambda_2}_\lambda=m^{\lambda_1+\lambda_2}_\lambda=0$. Equation
\eqref{eq.B2case1} becomes
$$
m^0_{\lambda,2}=m^0_\lambda-m^{2\lambda_2}_\lambda =1 \,.
$$
where the later equality comes from formula \eqref{eq.kostantB2} and the 
Kostant multiplicity formula \eqref{eq.komult}.
It follows from Lemma \ref{lem.pos} that $\mu_{\l,2}=0$ which proves part (b). 
Part (a) follows from Corollary \ref{lem.unique} and the fact that 
$m^0_{\lambda,2}=1\neq 0$.\\

\noindent
{\bf Case 1.2:}
$\l\not\in\Lambda_r$, i.e., $m_2\equiv 1 \bmod 2$. Since 
$m^0_\lambda=m^{2\lambda_2}_\lambda=0$ we have
$$
m^0_{\lambda,2} = m_\lambda^{\lambda_1+\lambda_2}-m_\lambda^{\lambda_2} 
=-1 \,.
$$

If $m_1>0$ then choose $\nu=\l_1+\l_2\in\Pi_\l$ and $\sigma$ such that 
$\rho-\sigma(\rho)=2\l_1+2\l_2$ we obtain 
$\mu_{\l,2}=2\nu-(\rho-\sigma(\rho))=2(\l_1+\l_2)-(2\l_1+2\l_2)=0$. 
If otherwise $m_1=0$ then we choose $\nu=\l_2\in\Pi_\l$, $\s$ such that 
$\rho-\sigma(\rho)=-\l_1+2\l_2$ and get $\mu_{\l,2}=2\l_2-(-\l_1+2\l_2)=\l_1$. 
This  proves part (b). Part (a) follows from Corollary \ref{lem.unique} and 
the fact that $m^0_{\lambda,2}=-1\neq 0$.

\noindent
{\bf Case 2:} $a=3$. Consider two small cases. \\
{\bf Case 2.1:}
If $\lambda=m_1\l_1+m_2\l_2\in\Lambda_r$, i.e., $m_2\equiv 0 \bmod 2$ 
then we have
\begin{align*}
m^0_{\lambda,3}= m^0_\lambda-m_\lambda^{\lambda_1}
= \frac{1}{2}+\frac{(-1)^{m_1+m_2}+(-1)^{m_1+m_2+2}}{4}
\end{align*}

If $m_1\equiv 0 \bmod 2$ then $ m^0_{\lambda,3}=1$. 
It follows from Lemma  \ref{lem.pos}  that $\mu_{\l,3}=0$ and  this 
completes part (b). 
Part (a) follows from Corollary \ref{lem.unique} and the fact that 
$m^0_{\lambda,3}=1\neq 0$.

If  $m_1\equiv 1  \bmod 2$ then $ m^0_{\lambda,3} = 0$. By a similar 
argument to the one in
Case 3 for $A_2$ it can be shown  that $\mu_{\l,3}=2\l_2$ is the 
unique minimizer and parts (a) and (b) follow. \\
\noindent
{\bf Case 2.2:}
If $\l=m_1\l_1+m_2\l_2\not\in\Lambda_r$, i.e., $m_2\not\equiv 0 \bmod 2$
then by a similar argument to the one in Case 3 for $A_2$ we have 
$\mu_{\l,3}=\l_1+\l_2$ is the unique minimizer  and $m_{\l,3}^{\l_1+\l_2}\neq 0$ 
which completes the proof.

\noindent
{\bf Case 3:} $a=4$. From Equation \eqref{eq.Sla} we have 
$$
m^0_{\lambda,4}=m^0_{\lambda}-m^{\lambda_2}_{\lambda}
$$

If $\lambda=m_1\l_1+m_2\l_2\in\Lambda_r$, i.e., $m_2\equiv 0 \bmod 2$  then 
we have $m^0_{\lambda,4}=m^0_{\lambda}-m^{\lambda_2}_{\lambda}=m^0_{\lambda}>0$, since 
$0\in\Pi_\l$.
 
If $\l=m_1\l_1+m_2\l_2\not\in\Lambda_r$, i.e., $m_2\not\equiv 0 \bmod 2$ then  
$m^0_{\lambda,4}=m^0_{\lambda}-m^{\lambda_2}_{\lambda}=-m^{\lambda_2}_{\lambda}<0$, 
since $\l_2\in\Pi_\l$. 

It follows from Lemma \ref{lem.pos} that $\mu_{\l,4}=0$, which completes 
part (b). Part (a) follows from Corollary \ref{lem.unique}.

\noindent
{\bf Case 4:} $a\geq 5$. The only $\sigma$ for which 
$\mu=\frac{\rho-\sigma(\rho)}{a}$ is a weight is 
$\sigma=1$ and hence $\mu=0$. So from Equation \eqref{eq.Sla} we have 
$m_{\l,a}^0=m_\l^0$.

If $\lambda=m_1\l_1+m_2\l_2\in\Lambda_r$, i.e., $m_2\equiv 0 \bmod 2$  
then $m_{\l,a}^0=m_\l^0>0$. It follows from Lemma \ref{lem.pos} that 
$\mu_{\l,a}=0$, which completes part (b). Part (a) follows from Corollary 
\ref{lem.unique}.

If $\l=m_1\l_1+m_2\l_2\not\in\Lambda_r$, i.e., $m_2\not\equiv 0 \bmod 2$  
then by a similar argument to the one in Case 3 for $A_2$ we have 
that $\mu_{\l,a}=(a-4)\l_2$ is the unique minimizer and
$m_{\l,a}^{(a-4)\l_2}=m_{\l}^{\l_2}\neq 0$. This completes both parts (a) and (b).

This completes the proof of Theorem \ref{thm.min} for $B_2$.
\qed
  
\subsection{Theorem \ref{thm.min} for $G_2$}
\lbl{sub.G2}

There are two simple roots $\{\a_1,\a_2\}$ and six positive roots
$\{a_1,\a_2,\a_1+\a_2,2\a_1+\a_2,3\a_1+\a_2,3\a_1+2\a_2\}$ of $G_2$ shown 
in Figure \ref{f.G2}.
\begin{figure}[htpb]
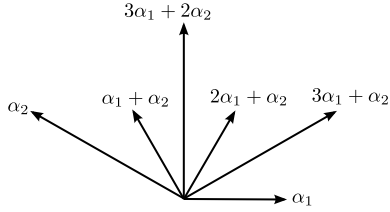

$$
\psdraw{diagram_g2}{2in}
$$
\caption{The five chambers of the Kostant partition function of $G_2$.
Kostant chambers from left to right: $u\leq v$, $v\leq u\leq \frac{3}{2}v$, 
$\frac{3}{2}v\leq u\leq 2v$, $2v\leq u\leq 3v$, $3v\leq u$.}
\lbl{f.G2}
\end{figure}
The Kostant partition function $p(u,v)=p(u \a_1+v \a_2)$ is given by 
\cite{Ta}
\begin{align}
p(u,v) =  \begin{cases}
g(u) &\text{if} \quad u\leq v\\
g(u)-h(u-v-1) &\text{if} \quad v\leq u\leq \frac{3}{2}v\\
h(v)-g(3v-u-1)+h(2v-u-2) &\text{if} \quad \frac{3}{2}v\leq u\leq 2v \\
h(v)-g(3v-u-1) &\text{if} \quad 2v\leq u \leq 3v \\
h(v) &\text{if} \quad 3v \leq u\\
\end{cases}
\end{align}  
where
\begin{align}
g(n) = \begin{cases}
\frac{1}{432}(n+6)(n^3+14n^2+54n+72) &\text{if} \quad n\equiv 0 \bmod 6\\
\frac{1}{432}(n+5)^2(n^2+10n+13) &\text{if} \quad n\equiv 1 \bmod 6\\
\frac{1}{432}(n+4)(n^3+16n^2+74n+68) &\text{if} \quad n\equiv 2 \bmod 6\\
\frac{1}{432}(n+3)^2(n+5)(n+9) &\text{if} \quad n\equiv 3 \bmod 6\\
\frac{1}{432}(n+2)(n+8)(n^2+10n+22) &\text{if} \quad n\equiv 4 \bmod 6\\
\frac{1}{432}(n+1)(n+5)(n+7)^2 &\text{if} \quad n\equiv 5 \bmod 6
\end{cases}
\end{align}
and
\begin{align}
h(n) = \begin{cases}
\frac{1}{48}(n+2)(n+4)(n^2+6n+6) &\text{if} \quad n\equiv 0 \bmod 2\\
\frac{1}{48}(n+1)(n+3)^2(n+5) &\text{if} \quad n\equiv 1 \bmod 2
\end{cases}
\end{align}

From Lemma \ref{lem.mula} we have
\begin{align}
m_{\l,a}^0 = m_\l^0 + \begin{cases}
-m_\l^{3\a_1+\a_2}-m_\l^{2\a_1+2\a_2}+m_\l^{5\a_1+3\a_2} &\text{if} \quad a=2\\
-m_\l^{3\a_1+2\a_2} &\text{if} \quad a=3\\
-m_\l^{\a_1+\a_2} &\text{if} \quad a=4\\
-m_\l^{2\a_1+\a_2} &\text{if} \quad a=5\\
0 &\text{if} \quad a\geq 6
\end{cases}
\end{align}
From now on, let us consider $\l=u\a_1+v\a_2\in\Lambda^+$, so 
$\frac{3}{2}v\leq u\leq 2v$. 

\noindent
{\bf Case 1:} $a=2$. We have
\begin{equation}
\lbl{g2.a2}
m_{\l,2}^0=m_\l-m_\l^{3\a_1+\a_2}-m_\l^{2\a_1+2\a_2}+m_\l^{5\a_1+3\a_2}
\end{equation}
Using the Kostant multiplicity formula we can calculate the weight 
multiplicities on the right hand side of Equation \eqref{g2.a2}, we
have, for example
\begin{align*}
m_{\lambda}^0 &= \sum\limits_{\sigma\in\W}(-1)^\sigma p(\sigma(\l+\rho)-\rho)\\
&= p(u,v)-p(-u+3v-1,v)-p(u,u-v-1)+p(3v-u-1,2v-u-2)\\
 &+  p(2u-3v-4,u-v-1)\\
&= \frac{u^4}{9}-\frac{29 u^3 v}{36}-\frac{7 u^3}{36}+\frac{17 u^2 v^2}{8}
+\frac{2 u^2 v}{3}
-\frac{19 u^2}{24}-\frac{29 u v^3}{12}-\frac{u v^2}{2}+3uv\\
&+ v^4-\frac{v^3}{12}-\frac{21 v^2}{8}+c_{1,0}(u)u+c_{0,1}(v)v+c_{0,0}(u,v)
\end{align*}
where 
\begin{align*}
c_{1,0}(u)=
\begin{cases}
\frac{1}{4} & \text{if $u\equiv 0$ mod 3}\\
\frac{17}{36} & \text{if $u\equiv 1$ mod 3}\\
\frac{25}{36} & \text{if $u\equiv 2$ mod 3}\\
\end{cases}
, \hspace*{0.3cm} c_{0,1}(u)=
\begin{cases}
\frac{1}{12} & \text{if $u\equiv 0$ mod 3}\\
-\frac{13}{36} & \text{if $u\equiv 1$ mod 3}\\
-\frac{29}{36} & \text{if $u\equiv 2$ mod 3}\\
\end{cases}
\end{align*}
\begin{align*}
c_{0,0}(u,v)=
\begin{cases}
1 & \text{if $u\equiv 0$ mod 6, $v\equiv 0$ mod 2}\\
\frac{29}{72} & \text{if $u\equiv 1$ mod 6, $v\equiv 0$ mod 2}\\
\frac{5}{9}& \text{if $u\equiv 2$ mod 6, $v\equiv 0$ mod 2}\\
\frac{5}{8} & \text{if $u\equiv 3$ mod 6, $v\equiv 0$ mod 2}\\
\frac{7}{9} & \text{if $u\equiv 4$ mod 6, $v\equiv 0$ mod 2}\\
\frac{13}{72} & \text{if $u\equiv 5$ mod 6, $v\equiv 0$ mod 2}
\end{cases}
, \hspace*{0.3cm}
c_{0,0}(u,v)=
\begin{cases}
\frac{5}{8} & \text{if $u\equiv 0$ mod 6, $v\equiv 1$ mod 2}\\
\frac{5}{18} & \text{if $u\equiv 1$ mod 6, $v\equiv 1$ mod 2}\\
\frac{13}{72}& \text{if $u\equiv 2$ mod 6, $v\equiv 1$ mod 2}\\
\frac{5}{8} & \text{if $u\equiv 3$ mod 6, $v\equiv 1$ mod 2}\\
\frac{29}{72} & \text{if $u\equiv 4$ mod 6, $v\equiv 1$ mod 2}\\
\frac{13}{72} & \text{if $u\equiv 5$ mod 6, $v\equiv 1$ mod 2}
\end{cases}
\end{align*}

$m_\l^{3\a_1+\a_2}, m_\l^{2\a_1+2\a_2}, m_\l^{5\a_1+3\a_2}$ can be computed similarly 
to show that $m_{\l,2}^0=1$. This confirms part (b). Part (a) follows Corollary 
\ref{lem.unique}.

\noindent
{\bf Case 2:} $a=3$. We have
\begin{align*}
m_{\lambda,3}^{0} =& m^0_\lambda-m^{[3,2]}_\lambda\\
=& -u^2+\frac{7 u v}{2}+\frac{u}{2}-3 v^2-\frac{v}{2}+c_{0,0}(u,v)
\end{align*}
where 
\begin{align*}
c_{0,0}(u,v)=
\begin{cases}
1 & \text{if $u\equiv 0,v\equiv 0$ mod 2}\\
\frac{1}{2} & \text{if $u\equiv 1,v\equiv 0$ mod 2}\\
\frac{1}{2} & \text{if $v\equiv 1$ mod 2}
\end{cases}
\end{align*}
Note that since $\frac{3v}{2}\leq u\leq 2v$, 
$-u^2+\frac{7 u v}{2}+\frac{u}{2}-3 v^2-\frac{v}{2}
=(-u^2+\frac{7 u v}{2}-3 v^2)+\frac{u-v}{2}\geq 0$ and therefore 
$m_{3,\lambda}^{0}>0$ for all $\lambda$. Part (a) follows from Lemma 
\ref{lem.pos} and part (b) follows from Corollary \ref{lem.unique}.

\noindent
{\bf Case 3:} $a=4,5$. The arguments are similar to that of Case 2.

\noindent
{\bf Case 4:} $a\geq 6$, can be done without computations. Indeed, we have 
$m_{\l,a}^0=m_\l^0 > 0$ since $\l\in\Lambda_r$; see \cite[$\S$13.4,Lem.B]{Hu}. 
Parts (a) and (b) follow from Lemma \ref{lem.pos} and 
Corollary \ref{lem.unique}.

This completes the proof of Theorem \ref{thm.min} for $G_2$.
\qed


\section{Some tails of the $T(2,3)$ and $T(4,5)$ torus knots}
\lbl{sec.examples}

\subsection{The tail for $A_2$ and the trefoil}
\lbl{sec.sep}

In this section we compute the tail of the $c$-stable sequence 
$J^{A_2}_{T(2,b),n\lambda_1}(q)$ for $b>2$ odd.
From Proposition \ref{thm.min} we have $\mu_{n\l_1,2}=n\l_2$ so
Equation \eqref{eq.hatJ} gives

\begin{equation*}
\hat{J}^{A_2}_{T(2,b),n\lambda_1}(q) =
\frac{1}{(1-q)(1-q^{n+1})(1-q^{n+2})} \check{J}^{A_2}_{T(2,b),n\lambda_1}(q)
\end{equation*}
where
\begin{align*}
\check{J}^{A_2}_{T(2,b),n\lambda_1}(q) = &
\sum\limits_{u_1\l_1+u_2\l_2\in S_{n\l_1,2}} c(u_1,u_2)
q^{\frac{b}{6}(u_1^2+u_1u_2+u_2^2-n^2)+(\frac{b}{2}-1)(u_1+u_2-n)} \\ & 
\cdot (1-q^{u_1+1})(1-q^{u_2+1})(1-q^{u_1+u_2+2})
\end{align*}
and from Cases 1-4 of Section \ref{sub.sub.A2},
\begin{align*}
 c(u_1,u_2)= m_{n\l_1,2}^{u_1\l_1+u_2\l_2}=
\begin{cases}
1 & \text{if $u_1+2u_2\geq 2n$, $u_1,u_2$ are even} \\
0 & \text{if $u_1+2u_2 < 2n$, $u_1,u_2$ are even} \\
-1 & \text{if $u_1+2u_2\geq 2n$, $u_1$ even, $u_2$ odd}\\
0 & \text{if $u_1+2u_2 < 2n$, $u_1$ even, $u_2$ odd}\\
0 & \text{if $u_1$ is odd}
\end{cases}
\end{align*}

\begin{lemma}
\lbl{lem.ineq10}
If  $\mu=u_1\lambda_1+u_2\lambda_2\in S_{n\lambda_1,2}$ then $u_1+2u_2\leq 2n$.
\end{lemma}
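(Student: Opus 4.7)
The plan is to deduce this as an immediate consequence of Lemma \ref{lem.Plambda}, which establishes the containment $S_{n\lambda_1,2}\subseteq P_{2n\lambda_1}$. Recall that in the proof of that lemma, $P_{2n\lambda_1}$ is described as the set of weights $\alpha$ satisfying $(\alpha,\alpha_i)\geq 0$ and $(2n\lambda_1-\alpha,\lambda_i)\geq 0$ for $i=1,2$. Since $\mu\in S_{n\lambda_1,2}$ implies $\mu\in P_{2n\lambda_1}$, I would simply apply the second of these inequalities with $i=2$.

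For $A_2$ the normalized inner product on the weight lattice gives $(\lambda_1,\lambda_2)=\tfrac{1}{3}$ and $(\lambda_2,\lambda_2)=\tfrac{2}{3}$. Writing $\mu=u_1\lambda_1+u_2\lambda_2$ and expanding,
\begin{equation*}
(2n\lambda_1-\mu,\lambda_2) \;=\; (2n-u_1)(\lambda_1,\lambda_2)-u_2(\lambda_2,\lambda_2) \;=\; \tfrac{1}{3}\bigl(2n-u_1-2u_2\bigr),
\end{equation*}
and imposing $\geq 0$ rearranges to the claimed inequality $u_1+2u_2\leq 2n$.

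There is no real obstacle here: the proof is a single inner product evaluation on top of a containment that is already proved. The only thing worth double-checking is the normalization of $(\cdot,\cdot)$ on the $A_2$ weight lattice, so that the sign and the factor of $\tfrac{1}{3}$ come out correctly and the conclusion matches the symmetric condition $u_1+2u_2\geq 2n$ that appeared in Corollary \ref{cor.nonzero}; together they will pin $c(u_1,u_2)$ down to the boundary line $u_1+2u_2=2n$ in the sequel.
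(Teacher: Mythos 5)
Your proof is correct and is essentially the paper's own argument: both deduce the bound from the containment $S_{n\lambda_1,2}\subseteq P_{2n\lambda_1}$ of Lemma \ref{lem.Plambda} and the defining inequality $(2n\lambda_1-\mu,\lambda_2)\geq 0$ of \eqref{ineq1}. Your explicit evaluation of the inner product (with the correct $A_2$ normalization $(\lambda_1,\lambda_2)=\tfrac13$, $(\lambda_2,\lambda_2)=\tfrac23$) just spells out the step the paper leaves implicit.
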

\begin{proof}
By Lemma \ref{lem.Plambda}, we have $\mu\in S_{n\l_1,2}\subset P_{2n\l_1}$. 
So by Inequality \eqref{ineq1} we have
$$
(2n\l_1-u_1\l_1-u_2\l_2,\l_2)\geq 0 \quad \text{i.e.,}\quad u_1+2u_2\leq 2n
$$
\end{proof}
From Corollary \ref{cor.nonzero} and Lemma \ref{lem.ineq10} we have

\begin{corollary}
$ c(u_1,u_2)\neq 0$ if and only if $u_1+2u_2=2n$. 
\end{corollary}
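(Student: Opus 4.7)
The plan is to derive the equivalence from the two inequalities already established plus a quick case inspection. Specifically, I will combine Corollary \ref{cor.nonzero} (applied to $\lambda = n\lambda_1$, so $m_1 = n$, $m_2 = 0$), which gives $u_1 + 2u_2 \geq 2n$ whenever $c(u_1,u_2) \neq 0$, with Lemma \ref{lem.ineq10}, which gives $u_1 + 2u_2 \leq 2n$ for every $\mu = u_1\lambda_1 + u_2\lambda_2 \in S_{n\lambda_1,2}$. Together these force $u_1 + 2u_2 = 2n$, proving the forward implication.

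For the converse, suppose $u_1 + 2u_2 = 2n$ with $u_1,u_2 \in \mathbb{N}$. Then $u_1 = 2n - 2u_2$ is automatically even, so we land in either Case 1 or Case 2 of Section \ref{sub.sub.A2} depending on the parity of $u_2$, never in Case 3 or Case 4. In Case 1 ($u_2$ even) the coefficient is $+1$ since the threshold condition $u_1 + 2u_2 \geq 2n$ holds with equality, and in Case 2 ($u_2$ odd) it is $-1$ for the same reason. In particular $c(u_1,u_2) = \pm 1 \neq 0$.

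The only subtlety to check is that $\mu$ actually lies in $S_{n\lambda_1,2}$ so that the formulas of Cases 1--4 describe $m_{n\lambda_1,2}^\mu$ correctly; but by Proposition \ref{prop.equality} we have $S_{n\lambda_1,2} = \mathcal{L}_{n\lambda_1,2} \cap P_{2n\lambda_1}$, and the lattice/polytope membership for any dominant $\mu$ with $u_1 + 2u_2 = 2n$ is immediate from the definitions (the parity of $u_1$ forces $\mu - 2n\lambda_1 \in 2\Lambda_r$ after adjusting by some $\sigma(\rho) - \rho$, and the polytope inequalities are satisfied since $u_1,u_2 \geq 0$ and $u_1 + 2u_2 = 2n \leq 2n$). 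Alternatively, one can avoid this check by recalling that $c(u_1,u_2) = m_{n\lambda_1,2}^\mu$ is defined for every dominant weight and is simply zero outside $S_{n\lambda_1,2}$, so the case analysis gives the correct value in all situations.

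The main observation, then, is that the desired equivalence is a two-line corollary: the forward direction is a sandwich of the two inequalities from Corollary \ref{cor.nonzero} and Lemma \ref{lem.ineq10}, and the backward direction is a parity argument together with the explicit sign formulas. No further technical obstacle arises.
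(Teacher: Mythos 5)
Your proof is correct and follows essentially the same route as the paper: the forward implication is the sandwich of Corollary \ref{cor.nonzero} with Lemma \ref{lem.ineq10}, and the converse is the parity observation ($u_1=2n-2u_2$ is even) combined with the explicit Case 1/Case 2 formulas, which the paper leaves implicit. Your extra remark that $c(u_1,u_2)=m^\mu_{n\lambda_1,2}$ vanishes outside $S_{n\lambda_1,2}$ anyway is the cleanest way to dispose of the membership subtlety.
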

 
\begin{proof}[Proof of Theorem \ref{thm.T23}]
Set $s=\frac{u_1}{2}=n-u_2$, then $u_1^2+u_1u_2+u_2^2-n^2=3s^2$ and we have 
 
\begin{equation*}
\check{J}^{\mathfrak{g}}_{T(2,b),n\lambda_1}(q)=
\frac{\sum\limits_{s=0}^n (-1)^{s}q^{\frac{b}{2}s^2+(\frac{b}{2}-1)s} 
(1-q^{2s+1})(1-q^{n-s+1})(1-q^{n+s+2})}{(1-q)(1-q^{n+1})(1-q^{n+2})}
\end{equation*}
Replacing $q^n$ by $x$ and using Lemma \ref{stability.lemma1} it follows
that $(\hat{J}^{A_2}_{T(2,b),n\lambda_1}(q))$ is $c$-stable and its tail 
$G_b(x,q)$ is given by
\begin{align*}
G_b(x,q)&=\frac{\sum\limits_{s= 0}^\infty (-1)^sq^{\frac{b}{2}s^2+(\frac{b}{2}-1)s} 
(1-q^{2s+1})(1-xq^{1-s})(1-xq^{s+2})}{(1-q)(1-qx)(1-q^2x)} \\
&=\frac{\sum\limits_{s=0}^\infty(-1)^s((q^{\frac{b}{2}s^2+(\frac{b}{2}-1)s}
-q^{\frac{b}{2}s^2+(\frac{b}{2}+1)s+1})(1+q^3x^2)
+(q^{\frac{b}{2}s^2+(\frac{b}{2}+2)s+3}
-q^{\frac{b}{2}s^2+(\frac{b}{2}-2)s+1})x)}{(1-q)(1-qx)(1-q^2x)}
\end{align*}
Using $s=t+1$, we have
\begin{align*}
\sum\limits_{s=0}^\infty(-1)^{s+1}q^{\frac{b}{2}s^2+(\frac{b}{2}+1)s+1}
&=\sum\limits_{t=-1}^\infty (-1)^{-t} q^{\frac{b}{2}(t+1)^2-(\frac{b}{2}+1)(t+1)+1} 
=\sum\limits_{s\leq -1}(-1)^tq^{\frac{b}{2}s^2+(\frac{b}{2}-1)s}
\end{align*}
Therefore,
\begin{align*}
\sum\limits_{s=0}^\infty(-1)^s(q^{\frac{b}{2}s^2+(\frac{b}{2}-1)s}
-q^{\frac{b}{2}s^2+(\frac{b}{2}+1)s+1})
&=\sum\limits_{s=-\infty}^\infty (-1)^sq^{\frac{b}{2}s^2+(\frac{b}{2}-1)s}
=\theta_{b,\frac{b}{2}-1}(q)
\end{align*}
Similarly,
\begin{align*}
\sum\limits_{s= 0}^\infty q^{\frac{b}{2}s^2+(\frac{b}{2}+2)s+3}
-q^{\frac{b}{2}s^2+(\frac{b}{2}-2)s+1}
&=\sum\limits_{s=-\infty}^\infty (-1)^sq^{\frac{b}{2}s^2+(\frac{b}{2}+2)s+3}
=q^3\theta_{b,\frac{b}{2}+2}(q)
\end{align*}
Thus,
\begin{equation*}
G_b(x,q)=\frac{ \theta_{b,\frac{b}{2}-1}(q)(1+q^3x^2)+q^3
\theta_{b,\frac{b}{2}+2}(q)x }{(1-q)(1-qx)(1-q^2x)}
\end{equation*}
Note that by replacing $s$ with $s+1$ or $s$ by $-s$ in Equation 
\eqref{eq.theta} it follows that
\begin{equation*}
\theta_{b,c}(q)=-q^{\frac{b}{2}+c}\theta_{b,b+c}(q), \qquad \theta_{b,-c}(q)=
\theta_{b,c}(q) 
\end{equation*}
To compute $G_3(x,q)$, use $b=3,c=\frac{1}{2}$ in the above equation
and {\em Euler's Pentagonal Theorem} (discussed in detail in \cite{Bell})
to obtain that
\begin{align*}
q^2\theta_{3,\frac{7}{2}}(q)&=-\theta_{3,\frac{1}{2}}(q)=-(q)_\infty
\end{align*}
This completes the proof of Theorem \ref{thm.T23}.
\end{proof}

\subsection{The tail for $A_2$ and the $T(4,5)$ torus knot}
\lbl{sub.A2.T45}

In this section we compute the tail for the $c$-stable sequence
$(J^{A_2}_{T(4,b),n\rho}(q))$ for $b>4$ odd. This example shows that $c$-stability 
is a necessary notion for Conjecture \ref{conj.1}. 

Let
\begin{align*}
A_{b,0}(q) &=\sum\limits_{(s,t)}\sum\limits_{(u_1,u_2)} 
\e_{s,t} c_{s,t}(u_1,u_2) q^{\frac{b}{12}(u_1^2+u_1u_2+u_2^2)+(\frac{b}{4}-1)(u_1+u_2)}
(1-q^{u_1+1})(1-q^{u_2+1})(1-q^{u_1+u_2+2})
\\
A_{b,1}(q) &=
  \sum\limits_{(s,t)}\sum\limits_{(u_1,u_2)} 
\e_{s,t} q^{\frac{b}{12}(u_1^2+u_1u_2+u_2^2)+(\frac{b}{4}-1)(u_1+u_2)}
(1-q^{u_1+1})(1-q^{u_2+1})(1-q^{u_1+u_2+2})
\end{align*}
where the $(s,t)$ summation is over the set 
\be
\lbl{sign.tab1}
\begin{array}{|c|cccccc|}
 \hline
(s,t) & (0,0) & (2,-1) & (-1,2) & (0,3) & (3,0) & (2,2)
\\ \hline 
\e_{s,t} & 1 & -1 & -1 & 1 & 1 & -1
\\ \hline 
\end{array}
\ee
and $(u_1,u_2) \in \BN^2$ satisfies $u_1\equiv -s \bmod 4$, 
$u_1-u_2\equiv t-s \bmod 12$ and
\begin{align*}
c_{s,t}(u_1,u_2)=& 
\begin{cases}
1-\frac{2u_1+u_2+2s+t}{12}& \text{if} \, u_1+s\geq u_2+t \\
1-\frac{u_1+2u_2+s+2t}{12}& \text{if} \, u_1+s\leq u_2+t
\end{cases}
\end{align*}

\begin{proposition} 
\lbl{prop.A2.T45}
The tail of the $c$-stable sequence $(\hat{J}^{A_2}_{T(4,b),n\rho}(q))$ 
is given by
$$
\frac{1}{(1-xq)^2(1-x^2q^2)}
\left( A_{b,0}(q) + n A_{b,1}(q) \right)
$$
\end{proposition}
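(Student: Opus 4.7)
The plan is to apply Theorem~\ref{thm.stable.limit} with $\lambda = \rho$ and $a = 4$. Since $m_1 = m_2 = 1$ satisfies $m_1 \equiv m_2 \pmod 3$, Theorem~\ref{thm.min}(b) gives $\mu_{n\rho,4} = 0$ for all $n$, so $\nu^0_{\rho,4} = \nu^1_{\rho,4} = 0$ and $\hat{\mathcal{L}}_{\rho,4} = \mathcal{L}_{\rho,4}$. Using the standard $A_2$ inner product on weights, one has $(\hat\mu,\hat\mu) = \tfrac{2}{3}(u_1^2 + u_1 u_2 + u_2^2)$ and $(\hat\mu,\rho) = u_1 + u_2$ for $\hat\mu = u_1\lambda_1 + u_2\lambda_2$, while $(\rho,\alpha_1) = (\rho,\alpha_2) = 1$ and $(\rho,\alpha_1+\alpha_2) = 2$. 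Substituting these into Theorem~\ref{thm.stable.limit} yields the denominator $(1-xq)^2(1-x^2q^2)$, the $q$-exponent $\tfrac{b}{12}(u_1^2+u_1u_2+u_2^2) + (\tfrac{b}{4}-1)(u_1+u_2)$, and the product $(1-q^{u_1+1})(1-q^{u_2+1})(1-q^{u_1+u_2+2})$, in exact agreement with the factors appearing in $A_{b,0}$ and $A_{b,1}$.

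Next I identify the summation set. From Equation~\eqref{trans.lat} and the fact that $\rho \in \Lambda_r$ for $A_2$ (so $4\rho \in 4\Lambda_r$ can be absorbed), one obtains $\mathcal{L}_{\rho,4} = \bigcup_{\sigma \in W}(\sigma(\rho)-\rho + 4\Lambda_r)$. Writing $\rho - \sigma(\rho) = s\lambda_1 + t\lambda_2$ via the orbit data tabulated in Equation~\eqref{eq.orbit}, and using the description $4\Lambda_r = \{v_1\lambda_1+v_2\lambda_2 : v_1,v_2 \in 4\BZ,\ v_1 \equiv v_2 \pmod{12}\}$ in weight coordinates, the membership $\hat\mu + s\lambda_1 + t\lambda_2 \in 4\Lambda_r$ translates into $u_1 \equiv -s \pmod{4}$ together with $u_1 - u_2 \equiv t - s \pmod{12}$, which matches the indexing conditions of the proposition verbatim (and automatically forces $u_2 \equiv -t \pmod 4$).

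The heart of the argument is the evaluation of the plethysm multiplicity $m^{\hat\mu}_{n\rho,4}$. By Lemma~\ref{lem.mula}, this equals $\sum_{(s,t)}\epsilon_{s,t}\, m^{\nu_{s,t}}_{n\rho}$ with $\nu_{s,t} := \tfrac{1}{4}\bigl((u_1+s)\lambda_1 + (u_2+t)\lambda_2\bigr)$, summed over those $(s,t)$ in Table~\eqref{eq.orbit} for which the two congruences above hold, so that $\nu_{s,t}$ is a weight. For $n$ large enough relative to fixed $u_1,u_2$, the weight $\nu_{s,t}$ sits deep in the interior of the hexagon $\Pi_{n\rho}$, and the explicit Kostant multiplicity formula of Section~\ref{sub.sub.A2} (specialized to $m_1 = m_2 = n$) gives $m^{\nu_{s,t}}_{n\rho} = n + c_{s,t}(u_1,u_2)$; the dichotomy $v_1 > v_2$ vs.\ $v_1 \leq v_2$ in Kostant's formula becomes $u_1+s > u_2+t$ vs.\ $u_1+s \leq u_2+t$ after clearing the factor of $4$, reproducing the definition of $c_{s,t}$.

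Substituting the linear-in-$n$ expression
\[
m^{\hat\mu}_{n\rho,4} \;=\; n \sum_{(s,t):\,\mathrm{cond}} \epsilon_{s,t} \;+\; \sum_{(s,t):\,\mathrm{cond}} \epsilon_{s,t}\, c_{s,t}(u_1,u_2)
\]
into the tail formula of Theorem~\ref{thm.stable.limit} and interchanging the order of summation between $(s,t)$ and $(u_1,u_2)$ yields $A_{b,0}(q) + n A_{b,1}(q)$ on the nose, with the $n$-dependent sum producing $A_{b,1}(q)$ and the constant part producing $A_{b,0}(q)$. The main bookkeeping hurdle is ensuring that an $\hat\mu$ lying in several $(s,t)$-strata is counted with the correct signed multiplicity, but this is automatic: membership of $\hat\mu$ in $\mathcal{L}_{\rho,4}$ via a given $\sigma \in W$ and the appearance of that same $\sigma$ in the plethysm sum of Lemma~\ref{lem.mula} are governed by the identical congruence conditions.
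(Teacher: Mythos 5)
Your proposal is correct and follows essentially the same route as the paper's own proof: both unwind Theorem \ref{thm.stable.limit} with $\mu_{n\rho,4}=0$, describe $\calL_{\rho,4}$ by the congruences $u_1\equiv -s \bmod 4$, $u_1-u_2\equiv t-s\bmod 12$, and evaluate $m^{\hat\mu}_{n\rho,4}$ via Lemma \ref{lem.mula} and Kostant's formula to obtain the linear-in-$n$ multiplicity $\e_{s,t}(n+c_{s,t}(u_1,u_2))$. Your closing remark about strata overlap is handled in the paper by the observation that, since $n\rho\in\Lambda_r$, at most one $(s,t)$ contributes for each $\hat\mu$, but your bookkeeping argument reaches the same conclusion.
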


\begin{proof}
We will use Theorem \ref{thm.stable.limit} and unravel its notation. 
To begin with, for $a=4$, we have 
\begin{align*}
\calL_{n\rho,4}= 
\bigcup\limits_{\s\in W}4n\rho +\sigma(\rho)-\rho + 4\Lambda_r
= \bigcup\limits_{\s\in W} \sigma(\rho)-\rho+4\Lambda_r 
= \bigcup\limits_{\s\in W}\{\mu\in\Lambda| \mu+\rho-\sigma(\rho)\in 4\Lambda_r\}
\end{align*}
Since $\rho=\a_1+\a_2\in \Lambda_r$, we have 
$\calL_{n\rho,4}=\calL_{\rho,4}$ for all natural numbers $n$.
Let $\mu=u_1\l_1+u_2\l_2$ and $\rho-\sigma(\rho)=s\l_1+t\l_2$ where 
$(s,t)$ are given in \eqref{sign.tab1} and $(-1)^\s=\e_{s,t}$ as in 
\eqref{sign.tab1}. In weight coordinates we have  
\begin{equation}
\lbl{lat.4}
\calL_{\rho,4} = \bigcup\limits_{(s,t)}\{(u_1,u_2)\in\mathbb{Z}^2:
u_1\equiv -s \bmod 4, u_1-u_2\equiv t-s \bmod 12\}
\end{equation}
Next we compute the plethysm multiplicities. Equation \eqref{eq.mula} 
implies that
\begin{align*}
m^{\mu}_{n\rho,4}&= \sum\limits_{\sigma\in\W}(-1)^\sigma 
m_{n\rho}^{\frac{\mu+\rho-\sigma(\rho)}{4}}\\
&= m_{n\rho}^{\frac{\mu}{4}}-m_{n\rho}^{\frac{\mu+2\l_1-\l_2}{4}}
-m_{n\rho}^{\frac{\mu-\l_1+2\l_2}{4}}+m_{n\rho}^{\frac{\mu+3\l_1}{4}}
+m_{n\rho}^{\frac{\mu+3\l_2}{4}}-m_{n\rho}^{\frac{\mu+2\l_1+2\l_2}{4}}
\end{align*}
Since $n\rho\in\Lambda_r$, $m_{n\rho}^{\nu}\neq 0$ only if 
$\nu\in\Lambda_r$. Therefore at most one of the terms in the above
equation is non-zero. Equation \eqref{eq.komult} gives
\begin{align*}
m_{n\rho}^{\mu}=
\begin{cases}
1+\frac{2m_1+m_2}{3}-\frac{2u_1+u_2}{3} & \text{if} \quad u_1\geq u_2\\
1+\frac{m_1+2m_2}{3}-\frac{u_1+2u_2}{3} & \text{if} \quad u_1\leq u_2
\end{cases}
\end{align*}
Therefore 
\begin{align*}
m^{\mu}_{n\rho,4}=\e_{s,t}
\begin{cases}
1+n-\frac{2u_1+u_2+2s+t}{12} &\text{if} \,\, u_1\equiv -s
\bmod 4, u_1-u_2\equiv t-s \bmod 12 \,, u_1+s\geq u_2+t\\
1+n-\frac{u_1+2u_2+s+2t}{12} &\text{if} \,\, u_1\equiv -s
\bmod 4, u_1-u_2\equiv t-s \bmod 12 \,,  u_1+s\leq u_2+t
\end{cases}
\end{align*}
where $\e_{s,t}$ is given from \eqref{sign.tab1}. 
Since $\mu_{n\rho,4}=0$, we have $\hat{\calL}_{n\rho,4}
=\calL_{n\rho,4},\, \hat{P}_{n\rho}=P_{n\rho},\, \hat{S}_{n\rho,4}=S_{n\rho,4}$. 
Theorem \ref{thm.stable.limit} concludes the proof of
Proposition \ref{prop.A2.T45}.
\end{proof}

\begin{exercise}
\lbl{ex.A1b}
Show that 
\be
\lbl{eq.Ab1.theta}
A_{b,1}(q)=\sum_{m_1,m_2 \in \BZ} q^{4 b(m_1^2 + 3 m_1 m_2 + 3 m_2^2)
+ (b -4)(2 m_1 + 3 m_2)}
(1 - q^{4 m_1 +1}) (1 - q^{4 m_1 + 12 m_2 +1}) (1 - q^{8 m_1 + 12 m_2+2})
\ee
The above equation shows that $A_{b,0}(q)$ is a sum of theta series of rank 
$2$, hence a modular form of weight $1$; see \cite{123}.
\end{exercise}

\subsection*{Acknowledgment}
The authors wish to thank Jon Stembridge for useful communications.

%

\bibliographystyle{hamsalpha}
\bibliography{biblio}
\end{document}